\definecolor{darkred}{rgb}{1,0,0} %can change the intensity in [0,1]
\definecolor{darkgreen}{rgb}{0,0.8,0}
\definecolor{darkblue}{rgb}{0,0,1}
\numberwithin{equation}{section}
\theoremstyle{definition}
\newtheorem{theorem}{Theorem}
\numberwithin{theorem}{section}
\newtheorem{proposition}[theorem]{Proposition}
\newtheorem{lemma}[theorem]{Lemma}
\newtheorem{defprop}[theorem]{Definition/Proposition}
\newtheorem{corollary}[theorem]{Corollary}
\newtheorem{definition}[theorem]{Definition}
\newtheorem{remark}[theorem]{Remark}
\newtheorem{example}[theorem]{Example}
\newtheorem{notation}[theorem]{Notation}
\newtheorem{construction}[theorem]{Construction}
\theoremstyle{remark}
\newcommand{\inv}{^{-1}} 
\newcommand{\toto}{\rightrightarrows}
\newcommand{\pithin}{\Pi^{\mathrm{thin}}}
\newcommand{\thinfg}{\pi^{\mathrm{thin}}}
\newcommand{\fund}{{\pi_1^\mathrm{thin} } }
\newcommand{\trans}{\mathrm{Trans}_G}
\newcommand{\transu}{\underline{\mathsf{Trans}}_G}
\newcommand{\Aut}{\mathrm{Aut}} 
\newcommand{\gtors}{{G\text{-}\mathsf{tors}}}
\newcommand{\hol}{\mathsf{hol}}
\newcommand{\tp}{\mathsf{tp}}
\newcommand{\rep}{\mathsf{rep}}
\newcommand{\assoc}{\mathsf{assoc}}
\newcommand{\Hom}{\mathsf{Hom}}
\newcommand{\rank}{\mathsf{rank}}
\newcommand{\Man}{\mathsf{{Man}}}
\newcommand{\Difgpd}{\mathsf{DiffGpd}}
\newcommand{\gpd}{\mathsf{{Groupoid}}}
\newcommand{\DS}{\mathsf{Diffeol}}%{\mathsf{DiffeolSp}}
\newcommand{\Open}{\mathsf{Open}}
\newcommand{\Set}{\mathsf{Set}}
\newcommand{\Bi}{\mathsf{Bi}}
\newcommand{\calG}{{\mathcal G}}
\newcommand{\calH}{{\mathcal H}}
\newcommand{\calP}{{\mathcal P}}
\newcommand{\CR}{{\mathcal R}}
\newcommand{\CS}{{\mathcal S}}
\newcommand{\calU}{{\mathcal U}}
\newcommand{\calX}{{\mathcal X}}
\newcommand{\calY}{{\mathcal Y}}
\newcommand{\scrP}{{\mathscr P}}
\newcommand{\fg}{\mathfrak g}
\newcommand{\fs}{\mathfrak s}
\newcommand{\ft}{\mathfrak t}
\newcommand{\fm}{\mathfrak m}
\newcommand{\fri}{\mathfrak i}
\newcommand{\fu}{\mathfrak u}
\newcommand{\R}{\mathbb{R}}
\title{Parallel Transport on Principal Bundles over % Geometric
  Stacks}
\author{Brian Collier} 
\author{Eugene Lerman}
\author{Seth Wolbert} 
\address{Department of Mathematics, University of Illinois, Urbana,
  IL 61801}
\thanks{
B.C. and S.W.  acknowledge support from National Science Foundation grant DMS 08-38434 ``EMSW21-MCTP: Research Experience for Graduate Students" 
}
\begin{document}

\begin{abstract}
  In this paper we introduce a notion of parallel transport for
  principal bundles with connections over differentiable stacks.  We
  show that principal bundles with connections over stacks can be
  recovered from their parallel transport thereby extending the
  results of Barrett, Caetano and Picken, and Schreiber and Waldorf
  from manifolds to stacks.  

  In the process of proving our main result we simplify Schreiber and
  Waldorf's definition of a transport functor for principal bundles
  with connections over manifolds and provide a more direct proof of 
the correspondence between
  principal bundles with connections and transport functors.
\end{abstract}

\maketitle

\tableofcontents

%%%%%%%%%%%%%%%%%%%%%%%%%%%%%
\section{Introduction}  
%%%%%%%%%%%%%%%%%%%%%%%%%%%%%
Recall that a choice of a connection 1-form $A\in \Omega^1(P,\fg)^G$
on a principal $G$-bundle $P$ over a (connected) manifold $M$ and a 
choice of a base point $x\in M$ gives rise to holonomy map
\[
\Omega(M,x) \to \Aut(\textrm{fiber of $P$
  at }x) \simeq G,
\]
where $\Omega(M,x)$ is the set of smooth loops at $x $ in $M$.  The
holonomy map uniquely determines the connection $A$ and, in fact, the
bundle $P$ itself \cite{K}.  If two loops in $\Omega(M,x)$ differ by a
homotopy that sweeps no area, a so-called ``thin homotopy,'' then
their holonomies are the same.  Therefore the holonomy map descends to
a well defined map on the quotient
\[
\calH:\Omega(M,x)/_{\sim} \to G,
\]
where $\sim$ means ``identify thinly homotopic loops.''  The quotient
$\fund(M,x):= \Omega(M,x)/_{\sim}$ is a group and $\calH$ is a
homomorphism. Moreover $\fund(M,x)$ has a smooth structure --- it is a
diffeological group (see Appendix~\ref{app:1} and
Remark~\ref{cor:4.26t}) --- and $\calH$ is smooth. Barrett \cite{B},
motivated by questions coming from general relativity and Yang-Mills
theory, proved that a homomorphism
\[
T: \fund(M,x) \to G
\]
is defined by parallel transport on some principal $G$-bundle with
connection if and only if $T$ is smooth.  More precisely, he proved
that assigning parallel transport homomorphism to a principal bundle
with a connection induces a bijection of sets:
\begin{multline*}
(\textrm{principal bundles with connections over }M)/\mathrm{isomorphisms}\\
\leftrightarrow \qquad (\textrm{smooth homomorphisms } \fund(M,x) \to
G)/\mathrm{conjugation}.
\end{multline*}
Barrett's proofs were simplified by Caetano and Picken \cite{CP}. Wood
\cite{Wood} reformulated Barrett's theorem in terms of the groupoids
of paths in $M$; this obviates the need to choose a base
point. Schreiber and Waldorf \cite{SW} categorified Wood's version of
Barrett's theorem. They showed that assigning holonomy to a bundle
defines an equivalence of categories
\[
 \hol_M: B^\nabla G(M) \to \Hom_{C^\infty}(\pithin (M), G\textrm{-tor}).
\]
Here, and in the rest of the paper, $B^\nabla G(M)$ denotes the
category of principal $G$-bundles with connections over a manifold
$M$, $\pithin(M)$ is the thin fundamental groupoid of $M$ (see
Definition/Proposition~\ref{defprop:thingpd}), $G$-tor is the category
of $G$-torsors (Definition~\ref{def:catG-tors}) and
$\Hom_{C^\infty}(\pithin (M), G\textrm{-tor})$ denotes a category of
functors that are smooth in an appropriate sense.  Schreiber and
Waldorf's definition of $\Hom_{C^\infty}(\pithin (M), G-\textrm{tor})$
is fairly involved and the proof that $\hol_M$ is an equivalence of
categories is indirect.  Nor is it clear if the equivalence $\hol_M$
is natural in the manifold $M$.

In this paper we propose a simple definition of what it means for a
functor $T:\pithin (M)\to G\textrm{-tor}$ to be smooth.  We provide a
sanity check by showing that the parallel transport functor $\hol_M
(P,A)$ defined by a connection $A$ on a principal $G$-bundle $P\to M$
is smooth in the sense of this paper. We then prove that for a
manifold $M$ the functor $\hol_M$ is an equivalence of categories
(Theorem~\ref{thm:sub_main}).  This part of the paper does not require
any knowledge of stacks.

In the second part of the paper we assume that the reader is familiar
with stacks over the site $\Man$ of manifolds. The standard references
are Behrend and Xu \cite{BX}, Heinloth \cite{H} and Metzler
\cite{Metzler}. 

We first prove that the assignment $M\mapsto \trans(M)$ extends to a
contravariant functor $\trans:\Man^{op}\to \gpd$
(Lemma~\ref{lem:5.1}).  By Grothendieck construction this presheaf
defines a category fibered in groupoids (CFG) $\transu\to \Man$.  The
collection of functors 
\[
\{ \hol_M: B^\nabla G(M)\to \trans(M)\}_{M\in \Man}
\]
extends to a 1-morphism of CFGs $ \hol: B^\nabla G \to \transu$
(Lemma~\ref{lem:5.166}).  Since each functor $\hol_M$ is an
equivalence of categories the functor $\hol$ is an equivalence
(Theorem~\ref{thm:main}).  Consequently, since $B^\nabla G$ is a
stack, so is $\transu$ (Corollary~\ref{cor:5.5}).  Together the two
results imply one of the main result of the paper: 
\[
\hol: B^\nabla G \to
\transu 
\]
is an isomorphism of
stacks.  In section~\ref{sec:6} we work out some consequence of
Theorem~\ref{thm:main} for principal bundles with connections over
stacks.  We start by recalling a definition of a principal $G$-bundle
over a stack $\calX$: it is a functor $P:\calX \to BG$, where $BG$
denotes the stack of principal $G$ bundles.  By analogy we introduce
the notion of a principal bundle with connection and of a transport
functor over a (not necessarily geometric) stack $\calX$.  As an
immediate consequence of Theorem~\ref{thm:main} we obtain that for
each stack $\calX$ the functor $\hol$ induces an equivalence of
categories between the categories of principal bundles with
connections over $\calX$ and transport functors over $\calX$
(Theorem~\ref{thm:6.4}).  We then recall that for a CFG $\calX\to
\Man$ and a Lie groupoid $\Gamma$ there is the category
$\calX(\Gamma)$ of cocycles with values in $\calX$. We discuss the
fact that the cocycle category $\calX(\Gamma)$ is equivalent to the
functor category $[[\Gamma_0/\Gamma_1], \calX]$
(Proposition~\ref{prop:Noohi}). Here and elsewhere in the paper
$[\Gamma_0/\Gamma_1]$ denotes the stack quotient of the Lie groupoid
$\Gamma$.  We end Part~2 of the paper by reformulating
Theorem~\ref{thm:6.4} in terms of the cocycle categories: for any Lie
groupoid $\Gamma$ the isomorphism of stack $\hol$ induces an
equivalence $\hol_\Gamma: B^\nabla G(\Gamma)\to \transu(\Gamma)$ of
the cocycle categories (Theorem~\ref{thm:6.7}).

The paper has two appendices. In Appendix~\ref{app:1} we review
the definition of a diffeological space both from a traditional point of view
and as a concrete sheaf of sets.  We prove the folklore result that the thin
fundamental groupoid $\pithin(M)$ of a manifold $M$ is a diffeological
groupoid.  We also prove two technical results that are needed
elsewhere in the paper.  We show that the target map $\ft$ of the
thin fundamental groupoid has local sections
(Lemma~\ref{lem:loc_sections}).  We prove that the assignment
$M\mapsto \pithin(M)$ extends to functor $\pithin:\Man\to \Difgpd $
from the category of manifolds to the category $ \Difgpd$ of
diffeological groupoids.  In Appendix~\ref{app:2} we prove that for
any Lie groupoid $\Gamma$ an equivalence of CFGs $F:\calX\to \calY$
induces an equivalence $F_\Gamma: \calX(\Gamma)\to \calY(\Gamma)$ of the
corresponding cocycle categories (Proposition~\ref{prop:last}).

\part{Parallel transport for bundles over manifolds}

%%%%%%%%%%%%%%%%%%%%%%%%%%%%%
\section{Thin Homotopy and the thin fundamental groupoid}  
\label{sec:2}
%%%%%%%%%%%%%%%%%%%%%%%%%%%%%

In this section following Schreiber and Waldorf we define the thin
fundamental groupoid $\pithin(M)$ of a manifold $M$. Nothing in this
section is new.  Our purpose for presenting this material is to keep
the paper self-contained and to fix notation. To start we recall the
notion of a path with sitting instances of Caetano and Picken
\cite{CP}.

\begin{definition}[A path with sitting instances] 
\label{def:sit}
Let $[a,b] \subset \R$ be a closed interval and $M$ a manifold. A
smooth map $\gamma: [a,b] \to M$ is a {\sf path with sitting
  instances} if $\gamma$ is constant on neighborhoods of $a$ and $b$.
\end{definition}
\begin{remark}\label{rmrk:5.3333}
It will be useful for us to fix a smooth non-decreasing map 
\[
\beta:[0,1] \to [0,1]
\]
which takes the value $0$ on all points sufficiently close to $0$ and
the value $1$ for all points sufficiently close to $1 $.  Given any
path $\gamma:[0,1]\to M$, $\gamma\circ \beta$ is a path with sitting
instances.  Therefore, up to ``reparameterization,'' all paths on $M$
are paths with sitting instances.\footnote{The map $\beta$ is not a
  reparametrization in the strict sense of the word since it is not
  invertible.}
\end{remark}

\begin{notation}% [the set $\calP (M)$ of smooth paths with sitting
  % instances]
  \label{calP(M)} We denote the set of all paths with
  sitting instances from the interval $[0,1]$ to a manifold $M$ by
  $\calP(M)$:
\[
\calP(M) := \{\gamma:[0,1]\to M \mid \textrm{ $\gamma$ is a path with
  sitting instances}\}.
\]
\end{notation}

A useful notion of homotopy fixing end points between to paths with
sitting instances is that of a thin homotopy:

\begin{definition}[Thin homotopy] \label{def:thin-homotopy} Two paths
  $\gamma_0, \gamma_1:[0,1] \to M$ with sitting instances and the same
  endpoints in a manifold $M$ are {\sf thinly homotopic} relative
  endpoints if there is a {\sf thin homotopy} between them.  That
  is, if there is a smooth map $H:[0,1]^2 \to M$ with the following
  properties:
\begin{enumerate}
\item $H$ is a smooth homotopy from $\gamma_0$ to $\gamma_1$ relative
  the endpoints:
\[
H(s,0) = \gamma_0 (s), \quad  H(s,1) = \gamma_1 (s) \quad \textrm{for all } s\in [0,1]
\]
and
\[
H(0,t) =\gamma_0(0) = \gamma_1(0), \quad H(1,t) =\gamma_0(1) = \gamma_1(1)
\quad \textrm{for all } t\in [0,1];
\]
\item $H$ is ``thin'': 
\[
\rank(dH)_{(s,t)}\leq 1
\]
for all $(s,t)\in [0,1]^2$;
\item $H$ has sitting instances near the boundary of the square:
  $H(s,t)$ is constant in $s$ for all $(s,t)$ near
  $\{0,1\}\times[0,1]$ and is constant in $t$ for all all $(s,t)$ near
  $[0,1]\times \{0,1\}$.
\end{enumerate}
We refer to $H$ as a {\sf thin homotopy} from $\gamma_0$ to
$\gamma_1$.
\end{definition}

\begin{notation}
  We write $H:\gamma_0\Rightarrow \gamma_1$ to indicate that $H$ is a
  thin homotopy from a path $\gamma_0$ to a path $\gamma_1$.
\end{notation}

\begin{remark}\label{rmrk:2.5}
There are several points to the definition of the thin homotopy:
\begin{enumerate}
\item Two thin homotopies can be easily pasted together (vertically or
  horizontally) to give rise to a new thin homotopy.  Consequently
  ``being thin homotopic'' is an equivalence relation $\sim$ on the
  space $\calP(M)$ of paths with sitting instances.  The relation
  $\sim$ is also compatible with concatenations.

\item The pullback by a thin homotopy of any differential 2-form is zero.
  Consequently if $(P,A) \to M$ is a principal bundle with connection
  and $H:[0,1]^2 \to M $ is a thin homotopy, then the pullback bundle
  $H^*(P,A) \to [0,1]^2$ is flat.  This, in turn, implies that
  parallel transport maps defined by two thinly homotopic curves are
  equal (see Proposition~\ref{prop:equal-transport} below). In
  particular, in studying parallel transport along loops based at some
  point $x_0$, we may safely divide out by thin homotopy.
\item The collection of all loops at a point $x_0\in M$ parameterized
  by $[0,1]$ do not form a group under concatenation: for example the
  composition is not associative.  It does become associative once we
  divide out by thin homotopy.  Thus, if we want to think of parallel
  transport along loops as a representation of the ``group'' of loops,
  we need to pass to the thin fundamental group $\fund (M,x)$ (see
  Definition~\ref{def:thinfd} below).
\end{enumerate}
\end{remark}
\begin{notation}[$\calP(M)/_{\sim}$]
  Since being thinly homotopic is an equivalence relation, the
  corresponding equivalence classes make sense. We denote the
  equivalence class of a path $\gamma$ by $[\gamma]$.  We denote the
set of equivalence classes of paths in a manifold $M$ by
$\calP(M)/_{\sim}$.
\end{notation}
\begin{remark}
  In \cite{CP} thin homotopy is called ``intimacy'' to distinguish it
  from the notion of thin homotopy between piece-wise smooth paths
  introduced by Barrett ({\em op.\ cit.})  The terminology of
  Definition~\ref{def:thin-homotopy} is now standard.
\end{remark}
\begin{defprop}[Thin fundamental groupoid $\pithin(M)$]\label{defprop:thingpd}
  The concatenation of paths with sitting instances in a manifold $M$
  descends to an associative multiplication of their thin homotopy
  classes.  This multiplication gives rise to a groupoid $\pithin(M)$
  with the points of $M$ as objects and thin homotopy classes of paths
  as morphisms.
\end{defprop}
While Definition/Proposition~\ref{defprop:thingpd} is familiar to
experts, cf.\ for example \cite[Lemma~2.3]{SW}, we will recall the
argument to keep this paper self-contained. First, we fix our notation
for groupoids.

\begin{notation}\label{notation:gpds}
  Let $\Gamma$ be a groupoid, that is, a category with all morphisms
  invertible.  We denote its collection of objects by $\Gamma_0$ and
  its collection of arrows/morphisms by $\Gamma_1$.  If
  $x\xrightarrow{\gamma}y$ is an arrow in $\Gamma$, (i.e., a
  morphism from an object $x$ to an object $y$) we say that $x$ is the
  source of $\gamma$, $y$ is the target and write $x= \fs(\gamma)$,
  $y= \ft(\gamma)$.  

  The collection of pairs of composable arrows of $\Gamma$ is the
  fiber product
\[
\Gamma_1\times _{\fs,\Gamma_0, \ft } \Gamma_1\equiv \Gamma_1\times _{\Gamma_0 } \Gamma_1 
:=\{(\gamma_2, \gamma_1) \in \Gamma_1 \times \Gamma _1 \mid
\fs (\gamma_2) = \ft (\gamma_1)\}.
\]
We denote the composition/multiplication in $\Gamma$ by $\fm$:
\[
\fm:\Gamma_1\times _{\Gamma_0 } \Gamma_1  \to \Gamma_1, \qquad
\fm(\gamma_2, \gamma_1) = \gamma_2\gamma_1.
\]
In particular, we write the composition from right to left.  The
inversion map is denoted by $\fri$:
\[
\fri:\Gamma_1\to \Gamma_1,\qquad \fri(\gamma) := \gamma\inv,
\]
and the unit map is denoted by $\fu:\Gamma_0\to \Gamma_1$.

Finally, we will often write $\Gamma =\{\Gamma_1\toto \Gamma_0\}$ to
single out the collections of arrows and objects of the groupoid
$\Gamma$ together with the associated source and target maps. This
suppresses the other three structure maps from notation.
\end{notation}

\begin{proof}[Proof of Definition/Proposition~\ref{defprop:thingpd}]
  We define the set of objects of the groupoid $\pithin(M)$ to be the
  set of points of the manifold
  $M$:
\[
\pithin(M)_0:= M.
\]
We define the set of arrows of the groupoid $\pithin(M)$ to be the set
of thin homotopy classes of paths:
\[
\pithin(M)_1:= \calP(M)/_{\sim}.
\]
% (q.v.\ Remark~\ref{rmrk:2.5}(1)). 
The source and target maps are
defined by assigning  endpoints to  classes of paths:
\[
\fs([\gamma]) := \gamma(0)\qquad \ft([\gamma]) := \gamma(1);
\]
these maps are well-defined since thin homotopies fix endpoints. The
unit map $\fu: M\to \calP(M)/_{\sim}$ assigns to each point $x\in M$
 the class of the constant path $1_x(t)\equiv x$:
\[
\fu(x):= [1_x] \quad \textrm{ for all } x\in M.
\]
Recall that if $\gamma:[0,1] \to M$ is a path, its reversal $\gamma\inv$
is defined by
$
\gamma\inv (t): = \gamma (1-t).
$
We define the inversion map $\fri:\pithin(M)_1\to \pithin(M)_1$ by
\[
\fri([\gamma]):= [\gamma\inv].
\]
The map $\fri$ is well-defined. 

The multiplication $\fm$ in the groupoid $\pithin (M)$ is defined by
concatenating representatives of the equivalence classes of paths. If
$\gamma, \tau:[0,1]\to M$ are two paths with $\gamma(0) = \tau(1),$
define $\gamma \tau$ by
\begin{equation}\label{eq:2.1}
\gamma\tau\, (t) := 
\left\{ 
\begin{array}{lrl} \tau (2t) & \mathrm{if} & t\in [0,1/2] \\
\gamma (2t-1) & \mathrm{if} & t \in [1/2,1] \\ 
\end{array} \right.  \qquad .
\end{equation}  
Note that since both $\gamma$ and $\tau$ are paths with sitting
instances, their concatenation $\gamma\tau$ is $C^\infty$.  This is one
of the reasons why working with paths with sitting instances is so
convenient. 
We then set
\[
\fm ([\gamma], [\tau]):= [\gamma\tau]
\]
for all composable classes of paths $([\gamma], [\tau]) \in (\calP
(M)/_{\sim} )\times _M (\calP (M)/_{\sim})$.  The map $\fm$ is
well-defined since thin homotopies can be concatenated.

Finally one needs to check that the five maps $\fs, \ft, \fu, \fri,
\fm$ defined above do define the structure of a groupoid on
$\pithin(M)$.  We omit this verification.
\end{proof}

\begin{remark}
  In Proposition~\ref{thm:gpoidpf} in Appendix~\ref{app:1} we check
  that $\pithin(M)$ is a diffeological groupoid.  This fact is
  well-known to experts.  We haven't been able to find a proof in literature.
\end{remark}

\begin{definition} \label{def:thinfd} Let $M$ be a manifold. We define
  the {\sf thin fundamental group } $\fund(M,x)$ of $M$ at a point $x
  \in M$ to be automorphism group of $x$ in the groupoid $\pithin(M)$:
\[
\fund(M,x) := \fs\inv (x) \cap \ft(x);
\]
it is the group of thin homotopy classes of loops at $x$ in $M$.
\end{definition}

\begin{remark}\label{cor:4.26t}
  Since for any manifold $M$ the thin fundamental groupoid
  $\pithin(M)$ is a diffeological groupoid, the automorphism groups
  $\fund(M,x)$ are diffeological groups.
\end{remark}

\section{Transport functors over manifolds}

In this section we axiomatize parallel transport in principal bundles
with connections by introducing the notion of a transport functor
(Definition~\ref{def:transp-funct}). We check that each principal
bundle with connection $(P,A)$ over a manifold $M$ gives rise to a
transport functor $\hol_M(P,A)$ (Theorem~\ref{thm:3.5*}).  Transport
functors over a manifold $M$ form a groupoid, which we denote by
$\trans(M)$. We end the section by showing
(Proposition~\ref{prop:3.17}) that the assignment of a transport
functor to a principal bundle with a connection extends to a functor
\[
\hol_M: B^\nabla G(M)\to \trans(M).
\]

\begin{definition}Let $G$ be a Lie group. A G-{\sf torsor} is a
  manifold $X$ with a free and transitive right action of a Lie
  group $X$.  In particular the map
\begin{equation}\label{eq:torsor-cond}
 X\times G\to X\times X, \qquad (x,g)\mapsto (x,x\cdot g)
\end{equation}
is a diffeomorphism.
\end{definition}

\begin{definition}[The category $\gtors$ of
  $G$-torsors] \label{def:catG-tors} Fix a Lie group $G$.  The
  collection of all $G$-torsors form a category: by definition a
  morphism from a torsor $X$ to a torsor $Y$ is a $G$-equivariant map
  $f:X\to Y$.

  We denote the category of $G$-torsors by $\gtors$.  Note that
  $\gtors$ is a groupoid.  We denote the set of morphisms in $\gtors$
  from a torsor $X$ to a torsor $Y$ by $\Hom_\gtors(X,Y)$.
\end{definition}

\begin{remark} \label{rmrk:d}
  Let $X$ be a $G$-torsor.  Since the map \eqref{eq:torsor-cond} is a
  bijection, for every pair of points $(x,y)\in X\times X$ there is a
  unique element $d= d(x,y) \in G$ so that
\begin{equation}\label{eq:torsor-cond2}
x\cdot d(x,y) = y.
\end{equation}
Since \eqref{eq:torsor-cond} is a diffeomorphism, the map $d:X\times X
\to G$ defined by \eqref{eq:torsor-cond2} is smooth.
\end{remark}
\begin{lemma}\label{lem:aut(T)} Let $G$ be a Lie group.  For any
  $G$-torsor $X$ the group of automorphisms 
\[
\Aut (X):= \Hom_\gtors (X,X)
\]
is canonically
  a Lie group. 
\end{lemma}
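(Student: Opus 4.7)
The plan is to endow $\Aut(X)$ with a Lie group structure by exhibiting a group isomorphism $\Aut(X) \cong G$ (depending on a choice of basepoint) and then transporting the smooth structure from $G$. After fixing any basepoint $x_0 \in X$, I would define
\[
\phi_{x_0} : \Aut(X) \to G, \qquad \phi_{x_0}(f) := d(x_0, f(x_0)),
\]
where $d: X \times X \to G$ is the smooth ``division'' map of Remark~\ref{rmrk:d}. The content of the lemma then amounts to checking that $\phi_{x_0}$ is a group isomorphism and that the pulled-back Lie group structure on $\Aut(X)$ does not depend on $x_0$.

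The first substep is to verify $\phi_{x_0}$ is a group isomorphism. \emph{Injectivity} follows from equivariance: if $\phi_{x_0}(f) = g$, then for any $y = x_0 \cdot h$ we have $f(y) = f(x_0) \cdot h = x_0 \cdot g \cdot h$, so $f$ is determined by $g$. For \emph{surjectivity}, given $g \in G$, set $f_g(y) := x_0 \cdot g \cdot d(x_0, y)$; this is smooth because $d$ is smooth and the $G$-action is smooth, it is $G$-equivariant because $d(x_0, y \cdot k) = d(x_0, y) \cdot k$, and evaluating at $x_0$ gives $\phi_{x_0}(f_g) = g$. Finally, a direct computation $\phi_{x_0}(f_1 \circ f_2) = d(x_0, f_1(f_2(x_0))) = \phi_{x_0}(f_1) \cdot \phi_{x_0}(f_2)$ shows $\phi_{x_0}$ is a group homomorphism.

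The final---and main---step is to check canonicity, i.e., that the smooth structure on $\Aut(X)$ obtained via $\phi_{x_0}$ is independent of the basepoint. I would compare $\phi_{x_0}$ with $\phi_{x_1}$ for a second basepoint $x_1 = x_0 \cdot h$: writing $g = \phi_{x_0}(f)$ and using equivariance,
\[
\phi_{x_1}(f) = d(x_1, f(x_1)) = d(x_0 \cdot h,\, x_0 \cdot g \cdot h) = h^{-1} g h,
\]
so $\phi_{x_1} = C_{h^{-1}} \circ \phi_{x_0}$ where $C_{h^{-1}}$ denotes conjugation by $h^{-1}$. Since inner automorphisms of the Lie group $G$ are in particular diffeomorphisms, the smooth structures on $\Aut(X)$ pulled back via $\phi_{x_0}$ and $\phi_{x_1}$ coincide. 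This basepoint-independence is the only step where one needs to be a bit careful; everything else is a routine verification.
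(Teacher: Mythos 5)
Your proposal is correct and takes essentially the same route as the paper: fix a basepoint $x$, use the division map $d$ to define $\psi_x(f) := d(x, f(x))$, check it is a group isomorphism onto $G$, and then verify the conjugation relation $\psi_{x\cdot b} = b^{-1}\psi_x b$ to conclude the transported Lie group structure is basepoint-independent. You have simply spelled out the routine verifications (injectivity, surjectivity, homomorphism property) that the paper states without detail.
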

\begin{proof}
A choice of a point $x\in X$ gives rise to a map
\[
\psi_x:\Aut(X)\to G,\qquad  \psi_x(f) := d(x, f(x))
\]
where $d:X\times X \to G$ is the smooth map implicitly defined by 
\[
x\cdot d(x,y) = y
\] 
(q.v.\ Remark~\ref{rmrk:d} above).  It is not hard to check that
$\psi_x$ is a group isomorphism.  Hence $\psi_x$ gives $\Aut(X)$ the
structure of a Lie group.

If $y\in X$ is another choice of  base point then $y = x\cdot b$ for
some $b\in G$.  It is easy to check that
\[
\psi_{x\cdot b} = b\inv \psi_x b.
\]
Hence the Lie group structure on $\Aut(X)$ does not depend on a choice
of $x\in X$.
\end{proof}

\begin{definition}[Transport functor]\label{def:transp-funct} Let $M$
  be manifold.  A functor $F:\pithin(M)\to \gtors$ is a (parallel)
  {\sf transport functor} if, for every point $x\in M$, the map
  $F|_{\fund(M,x)}:\fund(M,x) \to \Aut(F(x))$ is a map of
  diffeological spaces (see Definition~\ref{def:mapofdiffeos}).
\end{definition}
\begin{remark}
  Recall that the group $\fund(M,x)$ is a diffeological group (q.v.\
  Remark~\ref{cor:4.26t}) and the group $\Aut(F(x))$ of automorphisms
  of the torsor $F(x)$ is a Lie group (q.v. Remark~\ref{lem:aut(T)}).
  Therefore, it makes sense to require that $F|_{\fund(M,x)}$ is
  a map of diffeological spaces, that is, smooth.
\end{remark}

We believe  that Definition~\ref{def:transp-funct} is equivalent to
Schreiber and Waldorf's definition of a transport functor \cite{SW}.
We now argue that
our definition is a conservative extension of the notion of parallel
transport defined by Barrett and by Caetano and Picken ({\em op.\ cit.}).

\begin{lemma}
  Let $M$ be a manifold and $F:\pithin(M)\to \gtors$ a functor.
  Suppose that there is a point $x\in M$ so that $F|_{\fund (M,x)}:
  \fund (M,x) \to \Aut(F(x))$ is smooth.  Then, for any point $y$ in
  the path component of $x$, the map $F|_{\fund (M,y)}: \fund (M,y)
  \to \Aut(F(y))$ is smooth.  In particular, if $M$ is connected, $F:
  \pithin(M)\to \gtors$ is a transport functor if and only there
  exists one point $x\in M$ so that $F|_{\fund (M,x)}$ is smooth.
\end{lemma}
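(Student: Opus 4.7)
My plan is to transport the smoothness hypothesis from $x$ to an arbitrary $y$ in the same path component by conjugating with a suitable thin homotopy class $[\gamma]:x\to y$. First I will produce such a $[\gamma]$: since $y$ is path-connected to $x$ in $M$, pick any smooth $\widetilde\gamma:[0,1]\to M$ with $\widetilde\gamma(0)=x$ and $\widetilde\gamma(1)=y$, and set $\gamma:=\widetilde\gamma\circ\beta$ with $\beta$ the cutoff of Remark~\ref{rmrk:5.3333}. Then $\gamma\in\calP(M)$, and $[\gamma]$ is an arrow $x\to y$ in $\pithin(M)$.

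The next step is to analyse the induced conjugation on thin fundamental groups,
\[
c_{[\gamma]}:\fund(M,y)\to\fund(M,x),\qquad [\ell]\mapsto [\gamma]\inv[\ell][\gamma].
\]
This is a group isomorphism, with inverse $c_{[\gamma]\inv}$, and I claim it is smooth as a map of diffeological groups. The key input is that $\pithin(M)$ is a diffeological groupoid (Remark~\ref{cor:4.26t}, proved in Appendix~\ref{app:1}); in particular $\fm$ and $\fri$ are smooth, so left multiplication by the fixed arrow $[\gamma]\inv$ and right multiplication by the fixed arrow $[\gamma]$ are smooth, and $c_{[\gamma]}$ is the restriction of their composition to $\fund(M,y)$.

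On the torsor side, $F([\gamma]):F(x)\to F(y)$ is an isomorphism of $G$-torsors, so conjugation by it gives a group isomorphism
\[
C_{F([\gamma])}:\Aut(F(x))\to\Aut(F(y)),\qquad \varphi\mapsto F([\gamma])\circ\varphi\circ F([\gamma])\inv.
\]
Choosing a base point $\tilde x\in F(x)$ and setting $\tilde y:=F([\gamma])(\tilde x)$, $G$-equivariance of $F([\gamma])$ gives $F([\gamma])\varphi(\tilde x) = \tilde y\cdot\psi_{\tilde x}(\varphi)$, from which $\psi_{\tilde y}\circ C_{F([\gamma])}=\psi_{\tilde x}$; in the trivialisations of Lemma~\ref{lem:aut(T)} the map $C_{F([\gamma])}$ is therefore just $\id_G$, and is in particular a smooth Lie group isomorphism. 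Functoriality of $F$ now gives
\[
F|_{\fund(M,y)}=C_{F([\gamma])}\circ F|_{\fund(M,x)}\circ c_{[\gamma]},
\]
and the right-hand side is smooth by the hypothesis at $x$ together with the two smoothness statements above. The ``in particular'' clause for connected $M$ follows since any two points of a connected manifold share a path component.

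The only step that requires genuine outside input is the smoothness of $c_{[\gamma]}$, which rests squarely on the fact that $\pithin(M)$ is a diffeological groupoid; I simply cite the appendix for this. Everything else is bookkeeping with the identifications $\psi_{\tilde x}$ of Lemma~\ref{lem:aut(T)} and the functoriality of $F$.
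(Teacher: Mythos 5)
Your proof is correct and follows essentially the same route as the paper: choose a path $\gamma$ from $x$ to $y$, conjugate by $[\gamma]$ on both sides of the diagram, and use the diffeological groupoid structure of $\pithin(M)$ to see that conjugation on thin fundamental groups is smooth. You are in fact a bit more careful than the paper's own write-up: you fix the orientation so that $c_{[\gamma]}$ genuinely maps $\fund(M,y)\to\fund(M,x)$, and you supply an explicit argument (via the trivializations $\psi_{\tilde x}$ of Lemma~\ref{lem:aut(T)}) that $C_{F([\gamma])}$ is a Lie group isomorphism, whereas the paper simply asserts its smoothness.
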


\begin{proof}
  Choose a path $\gamma:[0,1]\to M$ with sitting instances such that
  $\gamma (0) = x$, $\gamma (1) = y$.  Then, since $\pithin(M)$ is a
  diffeological groupoid, the equivalence class $[\gamma]\in
  \calP(M)/_{\sim}$ defines a smooth map
\[
c_{[\gamma]}:\fund(M,x) \to \fund (M, y), \qquad [\tau] \mapsto
[\gamma]\inv [\tau] [\gamma]. 
\]
Similarly, we have the smooth map
\[
c_{F([\gamma])}:\Aut(F(x)) \to \Aut(F(y)), \qquad \varphi \mapsto
F([\gamma])\inv\circ \varphi \circ F([\gamma]). 
\]
Since $F$ is a functor, the diagram
\[
\xy
(-14,6)*+{\fund(M,x)}="1";
(14,6)*+{\fund(M,y)}="2";
(-14,-6)*+{\Aut(F(x))}="3";
(14,-6)*+{\Aut(F(y))}="4";
 {\ar@{->}^{c_{[\gamma]}} "1";"2"};
 {\ar@{->}^{F} "2";"4"};
 {\ar@{->}_{F} "1";"3"};
{\ar@{->}_{c_{F([\gamma])}} "3";"4"};
\endxy
\]
commutes.  That is,
\[
F|_{\fund(M,y)} =\left( c_{F([\gamma])}\right)\inv \circ
F|_{\fund(M,x)} \circ c_{[\gamma]}.
\]
Hence, $F|_{\fund(M,y)}$ is smooth.
\end{proof}

For Definition~\ref{def:transp-funct} to be reasonable, parallel
transport in principal bundles with connections have to define
transport functors; we now show that it does.

\begin{theorem}\label{thm:3.5*}
  A principal bundle with a connection $(P\xrightarrow{\pi}M, A\in
  \Omega^1(P,\fg)^G)$ over a manifold $M$ gives rise to a parallel
  transport functor
\[
  \hol_M (P,A) :\pithin(M) \longrightarrow   \gtors 
\]
  It is defined by 
\begin{equation}
  \hol_M (P,A)\, (x\xrightarrow{[\gamma]} y ):= 
 \left( \, ||^A_\gamma :P_x\to P_y\, \right)
\end{equation}
for any arrow $(x\xrightarrow{[\gamma]}y)\in \pithin(M)$.
Here $P_x$ denotes the fiber of the bundle $P\to M$ above $x\in M$ and
$||_\gamma ^A$ denotes parallel transport along a path
$\gamma$ defined by the connection $A$ (see Definition ~\ref{d:horlift} below).
\end{theorem}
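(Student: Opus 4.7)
The plan is to verify in turn the three ingredients required by Definition~\ref{def:transp-funct}: that for each arrow $[\gamma]$ the map $||^A_\gamma : P_x \to P_y$ is a well-defined isomorphism of $G$-torsors depending only on $[\gamma]$, that the assignment $[\gamma]\mapsto ||^A_\gamma$ respects units and the concatenation product, and finally that the induced map $\fund(M,x)\to \Aut(P_x)$ is smooth in the diffeological sense.

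First I would recall the standard construction of $||^A_\gamma$: for each $p\in P_x$ the connection $A$ determines a unique horizontal lift $\tilde\gamma_p:[0,1]\to P$ with $\tilde\gamma_p(0)=p$, and one sets $||^A_\gamma(p):=\tilde\gamma_p(1)$. Standard ODE theory gives that this map $P_x\to P_y$ is smooth, $G$-equivariant (because $A$ is $G$-invariant, the $G$-translate of a horizontal curve is horizontal) and invertible with inverse $||^A_{\gamma^{-1}}$; hence it lies in $\Hom_{\gtors}(P_x,P_y)$. The crucial step is showing independence of the thin homotopy class. Given a thin homotopy $H:\gamma_0\Rightarrow\gamma_1$, I would pull back $(P,A)$ to $[0,1]^2$ and argue that the curvature $F_A=dA+\tfrac12[A,A]$ pulls back to zero: indeed, at each point the rank condition $\rank(dH)\le 1$ forces $H^*\omega=0$ for every $2$-form $\omega$ on $M$, and since $F_A$ is horizontal and tensorial, $H^*F_A$ vanishes. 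Thus $H^*(P,A)\to [0,1]^2$ is a flat bundle over a simply connected base, so parallel transport around the boundary of $[0,1]^2$ is trivial; combined with the sitting instances on the vertical sides of $H$ (which make the transport along the two vertical segments equal to the identity on fibers), this yields $||^A_{\gamma_0}=||^A_{\gamma_1}$.

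For functoriality, units go to identities because the horizontal lift of a constant path is constant, and the composition law $||^A_{\gamma\tau}=||^A_\gamma\circ ||^A_\tau$ (matching the convention of \eqref{eq:2.1}) follows from uniqueness of horizontal lifts together with the fact that paths with sitting instances concatenate smoothly. For smoothness of $\hol_M(P,A)|_{\fund(M,x)}:\fund(M,x)\to \Aut(P_x)$, I would unwind the diffeology on $\fund(M,x)$ (reviewed in Appendix~\ref{app:1}) so that a plot is, up to the usual identifications, a smooth family $U\to \calP(M)$, $u\mapsto \gamma_u$, of loops at $x$; equivalently a smooth map $\Gamma:[0,1]\times U\to M$. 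After fixing a trivialization of $P$ on a neighborhood of $x$, smooth dependence of solutions of the horizontal lift ODE on parameters shows that $u\mapsto ||^A_{\gamma_u}\in \Aut(P_x)$ is smooth, as required.

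The main obstacle is the thin homotopy invariance, Step 2: one has to combine the pointwise rank bound (which trivializes the curvature on $[0,1]^2$) with the sitting instances of $H$ along the vertical boundary sides in order to conclude equality of the two transports, rather than mere conjugacy. Once that is in hand, functoriality and $G$-equivariance are essentially formal, and smoothness reduces to the classical smooth-dependence-on-parameters theorem for ODEs, transported through the diffeology on $\fund(M,x)$.
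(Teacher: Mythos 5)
Your proposal is correct and follows essentially the same route as the paper: well-definedness via pulling the bundle back along a thin homotopy and using flatness (the paper's Proposition~\ref{prop:equal-transport} constructs a global horizontal section over $[0,1]^2$, which is the same content as your ``trivial holonomy over a simply connected base'' argument), functoriality via the standard horizontal-lift identities, and smoothness by reducing to plots $U\to\calP(M)$ and invoking smooth dependence of the horizontal-lift ODE on parameters (the paper's Lemma~\ref{lem:5.6}, phrased as a time-1 flow, plus Lemma~\ref{lemma:3.18} to pass through the quotient diffeology).
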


To prove Theorem~\ref{thm:3.5*} we need a proposition and two lemmas.  
We start with a definition to fix our notation (cf.\
\cite[Proposition~3.1]{KN}).

\begin{definition}\label{d:horlift} Let $\pi:P\to M$ be a principal $G$-bundle with a
  connection 1-form $A$ and $\gamma:[a,b]\to M$ a path (a smooth
  curve). The {\sf horizontal lift} of $\gamma$ with respect to $A$ is
  a curve $\gamma^A_z:[a,b]\to P$ with the following three properties:
\begin{enumerate}
\item $\pi \circ \gamma^A_z = \gamma$;
\item $\gamma^A_z (a) = z$; and
\item $\frac{d}{dt} \gamma^A_z (t) \in
\ker\left(A_{\gamma^A_z(t)}\right)$ for all $t\in [a,b]$.
\end{enumerate} The {\sf parallel transport} along the path
$\gamma:[a,b]\to M$ defined by $A$ is a $G$-equivariant diffeomorphism
  (i.e., a map of $G$-torsors)
\[ 
||^A_\gamma : P_{\gamma(a)}\to P_{\gamma(b)}.
\]
It is defined by 
\[
 ||^A_\gamma (z):=
\gamma^A_z (b).
\]
for all $z\in P_{\gamma(a)}$.
\end{definition}
\begin{lemma}[Parallel transport pulls back]
\label{prop:fund_fact}\label{prop:5.17} 
Let $(P\to M, A)$, $(P'\to M', A')$ be two principal $G$-bundles with
connections and $f:P\to P'$ a $G$-equivariant map with
$A=f^*A'$. Suppose also that $f$ covers $\bar{f}:M\to M'$.  Then for
any horizontal lift $\gamma^A$ of a curve $\gamma:[a,b]\to M$ with
respect to $A$, the curve $f\circ \gamma^A$ is a horizontal lift of
the curve $\bar{f} \circ \gamma:[a,b] \to M'$ with respect to $A'$.
Consequently 
 \begin{equation}\label{eq:5.1} 
(f |_{ P_y })\circ ||^A_\gamma =
||^{A'}_{\bar{f}\circ \gamma}\circ (f|_{P_x}): P_x \to P'_{\bar{f} (y)},
\end{equation} where $x=\gamma(a)$ and $y= \gamma(b)$.
\end{lemma}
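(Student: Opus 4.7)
The plan is to verify directly that $f\circ\gamma^A$ satisfies the three defining properties of a horizontal lift of $\bar f\circ\gamma$ with respect to $A'$ listed in Definition~\ref{d:horlift}, and then extract the identity on parallel transport from uniqueness of horizontal lifts. The main point is that each of the three conditions for $\gamma^A$ transports across $f$ by a one-line calculation, once one uses the three given hypotheses in turn: that $f$ is a bundle map covering $\bar f$, that $f^*A'=A$, and that $G$-equivariance makes the whole statement sit over the correct fibers.

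Concretely, I would first record the commuting square $\pi'\circ f=\bar f\circ\pi$ coming from the fact that $f$ covers $\bar f$, so that
\[
\pi'\circ(f\circ\gamma^A)=\bar f\circ\pi\circ\gamma^A=\bar f\circ\gamma,
\]
which gives property (1). Property (2), the correct initial value, is immediate: $(f\circ\gamma^A)(a)=f(\gamma^A(a))=f(z)$. For the horizontality condition (3) I would compute, for each $t\in[a,b]$,
\[
A'_{f(\gamma^A(t))}\!\left(\tfrac{d}{dt}(f\circ\gamma^A)(t)\right)
= A'_{f(\gamma^A(t))}\!\left(df_{\gamma^A(t)}\tfrac{d}{dt}\gamma^A(t)\right)
= (f^*A')_{\gamma^A(t)}\!\left(\tfrac{d}{dt}\gamma^A(t)\right),
\]
and then invoke $f^*A'=A$ together with horizontality of $\gamma^A$ to conclude that this vanishes. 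Hence $f\circ\gamma^A$ satisfies (1)--(3).

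To deduce the displayed formula, I would fix $z\in P_x$ and apply the above to the horizontal lift $\gamma^A_z$, giving that $f\circ\gamma^A_z$ is a horizontal lift of $\bar f\circ\gamma$ starting at $f(z)$. By uniqueness of horizontal lifts with a prescribed initial condition, this lift must coincide with $(\bar f\circ\gamma)^{A'}_{f(z)}$, and evaluating at $b$ yields
\[
\|{}^{A'}_{\bar f\circ\gamma}(f(z))=f\bigl(\|{}^{A}_{\gamma}(z)\bigr),
\]
which is exactly \eqref{eq:5.1}. I do not expect any serious obstacle here: the argument is entirely formal given Definition~\ref{d:horlift} and the naturality of pullback of forms; the only point to keep an eye on is making sure the base-point tracking through $\bar f$ is consistent so that both sides of \eqref{eq:5.1} are honestly maps $P_x\to P'_{\bar f(y)}$.
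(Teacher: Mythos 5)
Your argument is correct and is essentially the paper's own proof, just with the routine verifications spelled out: the paper simply asserts that since $f^*A'=A$, the curve $f\circ\gamma^A_z$ is a horizontal lift of $\bar f\circ\gamma$ starting at $f(z)$, and you fill in exactly the three checks (projection, initial value, horizontality via the chain rule and pullback) plus the uniqueness step that the paper leaves implicit.
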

\begin{proof}
Let $\gamma^A _z:[a,b] \to P$ denote the horizontal lift of $\gamma$
starting at $z\in P_x$. Then, since $f^*A' = A$, the curve $f\circ
\gamma^A_z :[a,b]\to P'$  is a horizontal lift of $f\circ \gamma $
starting at $f(z)\in P'_{\bar{f} (x)}$.  %The proposition follows.
\end{proof}

\begin{proposition}\label{prop:equal-transport}
  Let $P\to M$ be a principal $G$-bundle with a connection 1-form $A$.
  Let $\gamma_0, \gamma_1:[0,1]\to M$ be two paths with sitting
  instances and $H:[0,1]^2\to M$ a thin homotopy from $\gamma_0$ to
  $\gamma_1$.  Let $p\in P$ be a point in the fiber above $\gamma_0(0)
  = \gamma_1 (0)$ and let $\widetilde{\gamma}_0$,
  $\widetilde{\gamma}_1$ be the corresponding horizontal lifts that
  start at $p$. Then
\[
\widetilde{\gamma}_0(1) =\widetilde{\gamma}_1 (1). 
\]
Hence the parallel transport maps $||_{\gamma_0},
||_{\gamma_1}:P_{\gamma_0(0)}\to P_{\gamma_0(1)}$ are {\em equal}.
\end{proposition}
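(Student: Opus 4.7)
The plan is to pull the whole situation back along $H:[0,1]^2\to M$ and reduce the statement to a fact about a flat bundle over the contractible square.

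First, I would form the pullback principal bundle $H^*P\to [0,1]^2$ equipped with the pullback connection $H^*A$, and let $\tilde p\in (H^*P)_{(0,0)}$ be the lift of $p$. The key observation is that $(H^*P, H^*A)$ is \emph{flat}: the curvature of $H^*A$ is $H^*F_A$, and by condition (2) of Definition~\ref{def:thin-homotopy} the differential $dH$ has rank at most $1$ everywhere, so $H^*$ kills every $2$-form. Thus the connection $H^*A$ has curvature zero.

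Next I would set up the four boundary paths $\sigma_i(s)=(s,i)$ and $\tau_j(t)=(j,t)$ of the square (reparametrized via the map $\beta$ of Remark~\ref{rmrk:5.3333} so that they have sitting instances and all concatenations below are smooth). By condition (1) of Definition~\ref{def:thin-homotopy}, $H\circ\sigma_0=\gamma_0$ and $H\circ\sigma_1=\gamma_1$, while $H\circ\tau_0$ and $H\circ\tau_1$ are the constant paths at $\gamma_0(0)$ and $\gamma_0(1)$ respectively. Horizontal lifts in $H^*P$ project along $H^*P\to P$ to horizontal lifts in $P$ (Lemma~\ref{prop:fund_fact}); transport along a constant path is the identity under the tautological identification of fibers of a pullback bundle over a constant segment. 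Consequently, to prove $\widetilde{\gamma}_0(1)=\widetilde{\gamma}_1(1)$ in $P$, it suffices to prove the equality
\[
||^{H^*A}_{\tau_1}\circ ||^{H^*A}_{\sigma_0}(\tilde p)=||^{H^*A}_{\sigma_1}\circ ||^{H^*A}_{\tau_0}(\tilde p)
\]
in the fiber $(H^*P)_{(1,1)}$, since after projecting to $P$ and using the identifications above, the left-hand side becomes $\widetilde{\gamma}_0(1)$ and the right-hand side becomes $\widetilde{\gamma}_1(1)$.

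The crux, which I expect to be the main obstacle, is establishing this last equality, i.e.\ showing that parallel transport in the flat bundle $(H^*P,H^*A)$ around the null-homotopic boundary loop $\tau_0^{-1}\cdot\sigma_1^{-1}\cdot\tau_1\cdot\sigma_0$ of the square is the identity. The standard way to do this is to exploit that $F_{H^*A}=0$ forces the horizontal distribution on $H^*P$ to be involutive, so over the contractible base $[0,1]^2$ one obtains a global horizontal section through $\tilde p$; equivalently, $(H^*P,H^*A)$ admits a flat trivialization in which horizontal lifts of curves are graphs of constant maps to $G$. In such a trivialization both sides of the displayed equation visibly equal $((1,1),g_0)$, where $\tilde p$ corresponds to $((0,0),g_0)$. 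Carrying out this step and verifying that the reparametrized concatenations on $[0,1]^2$ are paths with sitting instances completes the proof.
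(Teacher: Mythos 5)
Your proposal is correct and takes essentially the same route as the paper: pull back $(P,A)$ along $H$, observe that the rank-$\le 1$ condition forces the pulled-back connection to be flat, and use the resulting integrable horizontal distribution over the contractible square to obtain a global horizontal section (equivalently, a flat trivialization), from which equality of the boundary transports follows. The paper phrases the final step by restricting the global horizontal section to the four sides of the square and pushing forward along $\tilde H: H^*P\to P$ via Lemma~\ref{prop:fund_fact}, rather than your boundary-holonomy formulation, but these are the same argument.
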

\begin{proof} Our proof is essentially that of \cite[Section 6]{CP}).
  Pull back the principal bundle $P\to M$ to $[0,1]^2$ by the thin
  homotopy $H$. The result is a principal $G$-bundle $H^*P\to [0,1]^2$
  and a map of principal bundles
\[
\tilde{H}:H^*P\to P.
\]
% Since $[0,1]^2$ is contractible, the bundle $H^*P \to [0,1}^2$ is
% trivial, hence has a global section. 
Let $z'$ denote the point in the fiber of $H^*P$ above ${(0,0)}$ with
$\tilde{H}(z') = z$.  Let $\Omega$ be the curvature of $A$.  Since $H$
is a thin homotopy, $\tilde{H}^* \Omega = 0$.  Hence, the pullback
connection $A':= \tilde{H}^*A$ is flat, and $\ker A'$ defines an
integrable distribution on $H^*P$.  Since the square $[0,1]^2$ is
contractible there is a global horizontal section $\sigma:[0,1]^2 \to
H^*P$ with $\sigma(0,0) = z'$.  Then the restrictions of $\sigma$ to
the four sides of the square $[0,1]^2$ are horizontal lifts of the
corresponding curves.  By Lemma~\ref{prop:fund_fact},
$\tilde{H}$ sends horizontal lifts to horizontal lifts.  Hence,
\[
\tilde{H}( \sigma(0, t)) = \gamma^A_z(t).
\]
It also follows that 
\[
s\mapsto \tilde{H}( \sigma(s, 0))\qquad \textrm{and} \qquad s\mapsto
\tilde{H}( \sigma(s, 1))
\]
are the horizontal lifts of the constant curves $s\mapsto
\gamma(0)$ and $s\mapsto \tau(1)$; hence, they are constant curves
themselves.  Since $\tilde{H}(\sigma(0, 0)) =\tilde{H}(\sigma(1, 0))$,
the curve
\[
t\mapsto \tilde{H}( \sigma(1, t)) %= \tilde{gamma}_1 (t)
\]
is the horizontal lift of $\tau$ that starts at $z =
\gamma^A_z (a)$, that is, $\tau^A_z$.  Since $\tilde{H}(\sigma(0, 1))
=\tilde{H}(\sigma(1, 1))$, 
\[
\gamma^A_z (1)=\tau^A_z(1).
\]
\end{proof}

\begin{lemma} \label{lem:5.6} Suppose $(P\xrightarrow{\pi} M, A)$ is a
  principal $G$-bundle with connection, $F:U\times [0, 1]\to M$ a
  smooth map (so that the associated map $\check{F}:U\to \calP(M)$ is
  a plot (q.v. Definition~\ref{def:pathspacedifeol})), and $F_0,
  F_1:U\to M$ are the restrictions of $F$ to $U\times \{0\}$ and
  $U\times \{1\}$, respectively.  Then the map
\[
\Psi: F_0^*P\to F_1^* P, \qquad \Psi(u,z) :=  (u,||^A_{\check{F}(u)} (z))
\]
is smooth. Here as before $||^A$ denotes parallel transport on the principal
bundle $P \to M$ defined by  $A$.
\end{lemma}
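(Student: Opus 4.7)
The plan is to realize $\Psi$ as the time-$1$ flow of a smooth vector field on the pullback bundle, thereby reducing the statement to standard smooth dependence of ODE solutions on initial data and parameters.

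First, I would form the pullback principal $G$-bundle $\pi_F : F^*P \to U\times[0,1]$, together with the canonical bundle map $\tilde F : F^*P \to P$ covering $F$, and equip $F^*P$ with the pullback connection $A' := \tilde F^* A \in \Omega^1(F^*P, \fg)^G$. The horizontal distribution $\ker A' \subset T(F^*P)$ is a smooth $G$-invariant subbundle complementary to the vertical directions, so any smooth vector field $Y$ on $U\times[0,1]$ admits a unique smooth $G$-invariant horizontal lift $\tilde Y$ on $F^*P$. Taking $Y = \partial/\partial t$ produces a smooth vector field $\tilde Y$ whose integral curves are, by construction, the $A'$-horizontal lifts of the curves $t \mapsto (u,t)$.

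Next, I would show that the flow of $\tilde Y$ is defined on $\pi_F^{-1}(U\times\{0\})\times [0,1]$, producing a smooth time-$1$ flow map
\[
\phi_1 : \pi_F^{-1}(U\times\{0\}) \longrightarrow \pi_F^{-1}(U\times\{1\}).
\]
The projection $\pi_F \circ \tilde Y = \partial/\partial t$ guarantees that the base flow exists on $[0,1]$; completeness in the fiber direction is the classical statement that horizontal lifts along a smooth curve in the base exist for the whole interval, which in a local trivialization becomes a linear ODE in $g \in G$ with coefficients depending smoothly on $(u,t)$. Covering each compact set $\{u_0\}\times[0,1]$ by finitely many trivializing opens and patching gives global existence with smooth parameter dependence on $u$. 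Finally, I would identify $\pi_F^{-1}(U\times\{i\})$ with $F_i^*P$ for $i=0,1$ via the defining pullback square, and use Lemma~\ref{prop:fund_fact} to check that $\tilde F$ sends the integral curve of $\tilde Y$ starting at $(u,0,z)$ to the $A$-horizontal lift in $P$ of $\check F(u)$ starting at $z$; evaluating at $t=1$ therefore yields $||^A_{\check F(u)}(z)$, so $\phi_1$ coincides with $\Psi$ and smoothness is inherited from the flow of a smooth vector field.

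The main obstacle is the global-existence step: showing that the horizontal flow does not blow up before $t=1$ as $u$ varies. For a fixed $u$ this is standard, but one needs the argument to be uniform enough in $u$ to produce a smooth map on all of $F_0^*P$. The cleanest way around this is to observe that smoothness is a local property, so it suffices to work near a single $u_0 \in U$, where the compactness of $\{u_0\}\times[0,1]$ and the linearity of the horizontal-lift ODE in local trivializations make the required uniform existence and smooth parameter dependence a direct application of the standard theorem on smooth dependence of ODE solutions on parameters.
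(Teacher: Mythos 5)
Your proof takes essentially the same approach as the paper: both realize $\Psi$ as the time-$1$ flow of the horizontal lift of $\partial/\partial t$ (with respect to $F^*A$) on the pullback bundle $F^*P\to U\times[0,1]$, restricted to $F_0^*P$. You add a careful justification of global existence of that flow up to time $1$, a point the paper leaves implicit; this is a useful supplement but not a different argument.
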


\begin{proof}
  Recall that parallel transport $||^A_\gamma:P_{\gamma(0)} \to
  P_{\gamma(1)}$ is defined by sending $z\in P_{\gamma(0)}$ to
  $\gamma^A_z(1)$ where $\gamma_z^A: [0,1]\to P$ is the horizontal
  lift of $\gamma$ starting at $z$.   Recall also that the curve
\[
t\mapsto (t, \gamma^A_z(t))
\]
is an integral curve of a vector field $X_\gamma$ on $\gamma^*P\to
[0,1]$; $X_\gamma$ is the horizontal lift of $\frac{\partial}{\partial
  t}$ with respect to $\gamma^*A$.  Similarly let $X$ be the
horizontal lift of the vector field $(0, \frac{\partial}{\partial t})$
with respect to $F^*A$ to the bundle $F^*P\xrightarrow{F^*\pi} U\times
[0,1]$.  Then for any $(u,z) \in {F_0}^*P = \{(u,z)\in U\times P\mid
u= \pi(z)\} $ the curve
\[
t\mapsto (u, \check{F}(u)_z^A (t))
\]
is an integral curve of $X$.   Let $\Phi_1$ denote
  the time-1 flow of the vector field $X$.  Then
\[
(u,||^A_{\check{F}(u)} (z)) = \Phi_1(u,z)
\]
for all $(u,z)\in {F_0}^*P$.  It follows that 
\[
\Psi = \Phi_1|_{ {F_0}^*P},
\]
which is smooth.
\end{proof}

\begin{proof}[Proof of \protect{Theorem~\ref{thm:3.5*}}]
  We need to check that (1) $\hol_M(P,A)$ is well-defined, (2) it
  is a functor and (3) it is a transport functor in the sense of
  Definition~\ref{def:transp-funct}.

  Suppose $x\xrightarrow{[\gamma]}y$ is an arrow in $\pithin(M)$ and
  $\tau\in [\gamma]$. Then there is a thin homotopy $H:\gamma
  \Rightarrow \tau$.  By Proposition~\ref{prop:equal-transport}
  $||_\gamma = ||_\tau$.  Hence $\hol_M(P,A)$ is well-defined.

  Let $z\xleftarrow{\tau} y\xleftarrow{\gamma} x$ be a pair of
  composable paths in $M$.  Then by definition of multiplication in
  $\pithin(M)$
\[
   [\tau\gamma] = [\tau][\gamma].
\]
On the other hand, by a well-known property of parallel transport
\[
||_{\tau\gamma} = ||_\tau \circ ||_\gamma.
\]
Hence 
\[
\hol_M(P,A)([\tau\gamma]) =\hol_M(P,A)([\tau]) \hol_M(P,A)([\gamma]) .
\]
Parallel transport along constant paths is identity.  We conclude that
$\hol_M(P,A)$ is a functor.

Finally we check that for a point $x\in M$
\[
\hol_M(P,A)|_{\thinfg(M,x)}:{\thinfg(M,x)} \to \Aut(P_x)
\]
is smooth.  By Lemma~\ref{lemma:3.18} it is enough to check that for
any plot $p:U\to \Omega(M,x)$ on the space of loops at $x$ the composite map
\[
\hol_M(P,A)\circ q \circ p: U\to \Aut (P_x)
\]
is $C^\infty$.  Here $q:\Omega(M,x)\to \thinfg (M,x)$ is the quotient
map.  By construction of the $C^\infty$ structure on $\Aut(P_x)$ (see
Lemma~\ref{lem:aut(T)}) any map $L:U\to \Aut(P_x)$ is smooth if and
only if the map
\[
U\ni u\mapsto d(z, L(u)(z))\in G
\]
is smooth for some $z\in P_x$ (the map $d$ is defined in
Remark~\ref{rmrk:d}).  Thus since the map $d:P_x\times P_x\to G$ is
smooth, the smoothness of $L:U\to \Aut(P_x)$ follows from the
smoothness of the map
\[
U\to P_x,\qquad u\mapsto L(u)(z)
\] 
for some (any) choice of $z\in P_x$.  In the case we care about, this
amounts to showing that the map
\[
U\to P_x,\qquad u\mapsto ||_{p(u)}^A (z)
\]
is $C^\infty$.  This fact, in turn,  easily follows from Lemma~\ref{lem:5.6}.
\end{proof}

\begin{definition}[The category $B^\nabla G (M)$ of
    of principal bundles with connections]
    Principal $G$-bundles with connections over a manifold $M$ form a
    category $B^\nabla G$. The objects are principal bundles with
    connections, that is, pairs $(P\to M, A)$ where $P\to M$ is a
    principal $G$-bundle and $A\in \Omega^1(M,\fg)^G$ a connection.
    Morphisms are connection preserving gauge transformations.  That
    is a morphism from $(P\xrightarrow{\pi}M,A)$ to
    $(P'\xrightarrow{\pi'}M,A')$ is a $G$-equivariant map $f:P\to P'$
    with $\pi'\circ f =\pi$ and $f^*A' = A$.    
\end{definition}

\begin{remark}
The category $B^\nabla G (M)$ is a groupoid since every gauge
    transformation is automatically invertible.
\end{remark}

\begin{definition}[The category $\trans(M)$ of
    $G$-transport functors]
    Fix a Lie group $G$ and a manifold $M$. Transport functors on $M$
    form a category $\trans(M)$: the objects are transport functors
    (q.v.\ Definition \ref{def:transp-funct}) and morphism are
    arbitrary (!) natural transformations.
\end{definition}

\begin{remark}
  Since the category of $G$-torsors is a groupoid, a natural
  transformation between two transport functors is automatically a
  natural isomorphism.  Hence, $\trans(M)$ is automatically a groupoid.
\end{remark}

\begin{proposition} \label{prop:3.17}
For a manifold $M$ the assignment 
\[
(P,A)\mapsto \hol_M(P,A)
\]
of a transport functor to a principal bundle with a connection extends
to a functor
\[
\hol_M: B^\nabla G(M)\to \trans(M)
\]
from the category of principal $G$-bundles with connections over $M$
to the category of transport functors.
\end{proposition}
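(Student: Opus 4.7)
The plan is to define $\hol_M$ on morphisms in the only natural way available and then to verify that (i) each such assignment is a natural transformation between the corresponding transport functors, and (ii) composition and identities are preserved. Since we already know from Theorem~\ref{thm:3.5*} that $\hol_M(P,A)$ is a transport functor for every object $(P,A)\in B^\nabla G(M)$, the content of the proposition lies entirely on morphisms.

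Given a morphism $f:(P\xrightarrow{\pi}M,A)\to (P'\xrightarrow{\pi'}M,A')$ in $B^\nabla G(M)$, I would define a natural transformation
\[
\hol_M(f): \hol_M(P,A) \Rightarrow \hol_M(P',A')
\]
by declaring its component at $x\in M = \pithin(M)_0$ to be the fiberwise restriction
\[
\hol_M(f)_x := f|_{P_x}: P_x \to P'_x.
\]
Since $f$ is $G$-equivariant and covers the identity on $M$, each $f|_{P_x}$ is a morphism of $G$-torsors, so $\hol_M(f)_x$ is an arrow in $\gtors$, as required.

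The only substantive step is naturality: for every arrow $x\xrightarrow{[\gamma]}y$ in $\pithin(M)$ we must check commutativity of
\[
\xy
(-14,6)*+{P_x}="1";
(14,6)*+{P'_x}="2";
(-14,-6)*+{P_y}="3";
(14,-6)*+{P'_y}="4";
 {\ar@{->}^{f|_{P_x}} "1";"2"};
 {\ar@{->}^{||^{A'}_\gamma} "2";"4"};
 {\ar@{->}_{||^A_\gamma} "1";"3"};
{\ar@{->}_{f|_{P_y}} "3";"4"};
\endxy
\]
This is exactly Lemma~\ref{prop:5.17} applied with $\bar f = \id_M$: since $f^*A' = A$, horizontal lifts with respect to $A$ map under $f$ to horizontal lifts with respect to $A'$, so $f|_{P_y}\circ ||^A_\gamma = ||^{A'}_\gamma \circ f|_{P_x}$. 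The square is independent of the chosen representative of $[\gamma]$ by Proposition~\ref{prop:equal-transport}, so $\hol_M(f)$ is a well-defined natural transformation.

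It remains to verify functoriality of $\hol_M$. If $f = \id_P$, then $\hol_M(\id_P)_x = \id_P|_{P_x} = \id_{P_x}$ is the identity natural transformation on $\hol_M(P,A)$. If $(P,A)\xrightarrow{f}(P',A')\xrightarrow{g}(P'',A'')$ is a composable pair of morphisms in $B^\nabla G(M)$, then
\[
\hol_M(g\circ f)_x = (g\circ f)|_{P_x} = g|_{P'_x}\circ f|_{P_x} = \hol_M(g)_x \circ \hol_M(f)_x,
\]
so $\hol_M(g\circ f) = \hol_M(g)\circ \hol_M(f)$. Thus $\hol_M$ is a functor. The only conceptual step is naturality, and it is not really an obstacle because Lemma~\ref{prop:5.17} provides it directly; all other verifications are routine and follow immediately from the $G$-equivariance and connection-preservation that define morphisms in $B^\nabla G(M)$.
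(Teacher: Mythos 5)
Your proof is correct and follows essentially the same approach as the paper: define $\hol_M(f)$ componentwise by fiberwise restriction and invoke Lemma~\ref{prop:5.17} (equivalently, equation~\eqref{eq:5.1}) for the naturality square. The only difference is that you spell out the routine checks of identities and composition that the paper leaves to the reader.
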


\begin{proof}
  Let $f:(P,A)\to (P',A')$ be a morphism in $B^\nabla G(M)$.  We want
  to define natural transformation $\hol_M(f):\hol_M(P,A)\Rightarrow
  \hol_M(P',A')$.  Let $x\xrightarrow{[\gamma]}y$ be an arrow in
  $\pithin(M)$.  By \eqref{eq:5.1} the diagram
\[
\xy
(-10,6)*+{P_x}="1";
(10,6)*+{P_y}="2";
(-10,-6)*+{P'_x}="3";
(10,-6)*+{P'_y}="4";
 {\ar@{->}^{||^A_\gamma} "1";"2"};
 {\ar@{->}_{f|_{P_x}} "1";"3"};
 {\ar@{->}_{||^{A'}_\gamma} "3";"4"};
{\ar@{->}^{f|_{P_y}} "2";"4"};
\endxy
\]
commutes.  Since $||^A_\gamma = \hol_M(P,A)\,([\gamma])$ and
$||^{A'}_\gamma = \hol_M(P',A')\,([\gamma])$ it follows that the
assignment
\[
x\mapsto f|_{P_x}
\]
is a natural transformation from $\hol_M(P,A)$ to $\hol_M(P',A')$.  We denote it by $\hol_M(f)$.    It is easy to check that the map
\[
\hol_M:B^\nabla G(M)\to \trans (M), \qquad 
 \left( (P,A)\xrightarrow{f} (P',A')\right) \mapsto \left( \hol_M(P,A)\xRightarrow{\hol_M(f)}  \hol_M(P',A')\right)
\]
sends identity maps to identity natural transformations and preserves
composition. In other words $\hol_M$ is a functor.
\end{proof}

\section{Equivalence of the categories of principal bundles with
  connections and of transport functors}
\label{proofsection}

The goal of this section is to prove that the functor $\hol_M:
B^\nabla G(M)\to \trans(M)$ constructed in Proposition~\ref{prop:3.17}
is an equivalence of categories. Note that since our definition of
transport functors is different from the one proposed by Schreiber and
Waldorf in \cite{SW} this is not an alternative prove of an analogous
result in \cite{SW}.  Our proof is in the same
in spirit as Barrett's original proof \cite{B}.  The details are
necessarily different since we are carefully keeping track of
morphisms.   We start by formally stating the theorem in question:

\begin{theorem}\label{thm:sub_main} For every manifold $M$ the
  holonomy functor 
\[
\hol_M: B^\nabla G(M) \to \trans (M)
\]
constructed in Proposition~\ref{prop:3.17} is an equivalence of categories.
\end{theorem}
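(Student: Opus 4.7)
The strategy is standard: prove that $\hol_M$ is fully faithful and essentially surjective. The faithful and full parts are comparatively clean; essential surjectivity is where the real work lies, and it is there that the technical setup from the paper (local sections of $\ft$, diffeological structure on $\pithin(M)$, Lemma~\ref{lem:5.6}) gets fully used.

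\textbf{Faithfulness.} A morphism $f:(P,A)\to (P',A')$ in $B^\nabla G(M)$ is determined by its fiberwise restrictions $\{f|_{P_x}\}_{x\in M}$, and by construction $\hol_M(f)_x = f|_{P_x}$. Hence $\hol_M$ is injective on morphism sets.

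\textbf{Fullness.} Given a natural transformation $\eta:\hol_M(P,A)\Rightarrow \hol_M(P',A')$, set $f|_{P_x}:=\eta_x$; this defines a fiber-preserving, $G$-equivariant set map $f:P\to P'$. For smoothness, fix $x_0\in M$ and use Lemma~\ref{lem:loc_sections} to pick a smooth section $\sigma:U\to \pithin(M)_1$ of $\ft$ with constant source $x_0$. Naturality of $\eta$ gives
\[
f\circ ||^A_{\sigma(x)} = ||^{A'}_{\sigma(x)}\circ f|_{P_{x_0}}
\]
on $P_{x_0}$ for every $x\in U$. Since $\sigma$ is smooth, Lemma~\ref{lem:5.6} (applied to both $(P,A)$ and $(P',A')$) shows that the two sides depend smoothly on $x$, while the maps $z\mapsto ||^A_{\sigma(x)}(z)$ provide a local trivialization of $P$ over $U$. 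This forces $f$ to be smooth on $\pi\inv(U)$. Finally, since $f$ intertwines parallel transport along every smooth path, it sends horizontal curves to horizontal curves, so $df(\ker A)\subset \ker A'$; combined with $G$-equivariance (which determines both $A$ and $A'$ on vertical vectors via the Maurer--Cartan form), this yields $f^*A'=A$.

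\textbf{Essential surjectivity.} Given a transport functor $T:\pithin(M)\to \gtors$, build $(P_T,A_T)\in B^\nabla G(M)$ as follows. As a set with $G$-action, let $P_T:=\coprod_{x\in M} T(x)$ with projection $\pi$ sending $T(x)\ni p$ to $x$. To put a smooth structure on $P_T$, cover $M$ by opens $U$ admitting, via Lemma~\ref{lem:loc_sections}, a smooth section $\sigma:U\to \pithin(M)_1$ of $\ft$ with constant source $x_0\in M$, and declare
\[
\phi_\sigma:U\times T(x_0)\to \pi\inv(U),\qquad (x,p)\mapsto T(\sigma(x))(p),
\]
to be a diffeomorphism. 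The transition between two such trivializations with the same base point $x_0$ is, fiberwise, multiplication by $T(\sigma_2(x)\inv \sigma_1(x))$; since $\pithin(M)$ is a diffeological groupoid (Appendix~\ref{app:1}), $x\mapsto \sigma_2(x)\inv \sigma_1(x)$ is a plot of $\fund(M,x_0)$, and the defining smoothness condition on a transport functor then forces the transition to be smooth into $\Aut(T(x_0))$. Transitions across different base points $x_0,x_0'$ reduce to the previous case after composing with a single connecting morphism $T([\delta])$ for a fixed $\delta:[0,1]\to M$ from $x_0$ to $x_0'$. Define $A_T$ by prescribing horizontal lifts: for $\gamma:[0,1]\to M$ with sitting instances and $p\in T(\gamma(0))$, the curve $t\mapsto T([\gamma_t])(p)$ (where $\gamma_t$ is a sitting-instance reparameterization of $\gamma|_{[0,t]}$) is declared horizontal. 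Functoriality of $T$ together with Remark~\ref{rmrk:5.3333} makes this independent of the chosen reparameterization, $G$-equivariance of $T$ makes the resulting distribution $G$-invariant, and smoothness follows from the construction of the trivializations $\phi_\sigma$. By construction, parallel transport in $(P_T,A_T)$ along any path $\gamma$ agrees with $T([\gamma])$, so the identity on each fiber $T(x)=(P_T)_x$ assembles to a natural isomorphism $\hol_M(P_T,A_T)\cong T$.

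\textbf{Main obstacle.} The delicate step is the smooth structure on $P_T$. The hypothesis that $T$ is smooth is placed only on the loop groups $\fund(M,x)$, not on $\pithin(M)$ as a whole, so every transition function must first be rewritten as a plot into some $\fund(M,x_0)$ before smoothness can be extracted. This is precisely the role of Lemma~\ref{lem:loc_sections} and the diffeological groupoid structure on $\pithin(M)$; once these are available, the rest of the argument is essentially bookkeeping.
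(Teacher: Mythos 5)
Your proposal takes a genuinely different decomposition from the paper's — you go directly for fully faithful plus essentially surjective, whereas the paper factors $\hol_M$ as $\rep\circ\tp$ through an intermediate category $B^pG(M)$ of principal bundles carrying a smooth $\pithin(M)$-action, and then proves $\tp$ is an \emph{isomorphism} of categories (Lemma~\ref{lemmaA}, which houses all the connection-theoretic work) and $\rep$ is an equivalence (Lemma~\ref{lemmaB}, which houses the associated-bundle construction). Your faithfulness and fullness arguments, and the manifold structure you put on $P_T$ via local sections of $\ft$ and the loop-group smoothness hypothesis, do line up with the paper's Lemmas~\ref{lem:6.12} and~\ref{lemmaB} in spirit, and the reduction-to-connected observation (the paper's Lemma~\ref{lem:6.2}) is implicitly folded into your use of a single base point $x_0$.

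However, there is a genuine gap in essential surjectivity, and it is located exactly at the point you dismiss as ``bookkeeping'': the construction of $A_T$. You define the connection by \emph{declaring} the curves $t\mapsto T([\gamma_t])(p)$ to be horizontal, and then claim the resulting distribution exists and is smooth. The paper explicitly flags this as the subtle step and states (in the proof of Lemma~\ref{lemma:5.34}) that it is \emph{not clear} that the set of tangent vectors to such declared-horizontal curves at a given point $z_0$ is a linear subspace of $T_{z_0}P_T$, nor that two declared-horizontal curves through $z_0$ with the same projected velocity have the same velocity in $P_T$. Reparameterization-independence (which is all your appeal to functoriality and Remark~\ref{rmrk:5.3333} gives you) does not settle this: two declared-horizontal curves can pass through $z_0$ via genuinely different paths in $M$ with the same tangent at the base point, and consistency of their lifted velocities is a real claim. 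The paper's resolution is Barrett's Lemma~\ref{lem:Barrett} and Corollary~\ref{cor:6.14} — a smooth family of loops through the constant loop has zero derivative, hence two such families with matched endpoints have matched derivatives — together with a reformulation of $\calH_z$ as the image of the differential of an honest smooth section $F(\cdot,z)$ built from straight-line paths in a chart (so linearity and transversality to the vertical are automatic), followed by a verification that declared-horizontal curves are tangent to this $\calH$. Your proposal invokes none of this machinery; without it, the distribution $A_T$ is not shown to be well-defined, let alone smooth, and essential surjectivity does not go through. Your own ``main obstacle'' paragraph misidentifies the bottleneck: the manifold structure on $P_T$ is the comparatively routine part, and the honest obstacle is exactly the connection.
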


We first reduce the proof of Theorem~\ref{thm:sub_main} to the case
where the manifold $M$ is connected.  This is done to simplify the
proof of Lemma~\ref{lemmaB} below.  One can also prove Lemma~\ref{lemmaB}
directly for arbitrary manifolds at a cost of additional fiddling.

\begin{lemma}\label{lem:6.2}
  Suppose the functor $\hol_M: B^\nabla G(M)\to \trans(M)$ is an
  equivalence of categories for all {\em connected} manifolds $M$.
  Then $\hol_M$ is an equivalence of categories for {\em any} manifold
  $M$.
\end{lemma}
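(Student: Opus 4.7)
The plan is to reduce everything to the connected components of $M$ and exhibit both $B^\nabla G(M)$ and $\trans(M)$ as products of the corresponding categories over these components, with $\hol_M$ identified with the product of the holonomy functors on each component. Once this is done, a product of equivalences of categories is an equivalence, and the lemma follows.

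First I would write $M=\bigsqcup_{\alpha\in A} M_\alpha$ as the disjoint union of its connected components and observe the obvious isomorphism of categories
\[
B^\nabla G(M)\;\cong\;\prod_{\alpha\in A} B^\nabla G(M_\alpha),
\]
sending a bundle with connection $(P,A)$ to the family of restrictions $(P|_{M_\alpha},A|_{M_\alpha})$, and sending a connection-preserving gauge transformation to the family of its restrictions. This is immediate from the local nature of the definitions, and an inverse is given by disjoint union of bundles, connections and equivariant maps.

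Next I would establish the analogous decomposition for transport functors. Any path with sitting instances $\gamma:[0,1]\to M$ has connected image, so it lies entirely inside a single component $M_\alpha$; the same holds for thin homotopies since the square $[0,1]^2$ is connected. Consequently $\pithin(M)=\bigsqcup_{\alpha\in A}\pithin(M_\alpha)$ as groupoids, and a functor $F:\pithin(M)\to \gtors$ is the same datum as a family $\{F_\alpha:\pithin(M_\alpha)\to \gtors\}_{\alpha\in A}$. The smoothness condition on $F$ is imposed pointwise at each $x\in M$, and for $x\in M_\alpha$ the thin fundamental group $\fund(M,x)$ coincides with $\fund(M_\alpha,x)$; hence $F$ is a transport functor in the sense of Definition~\ref{def:transp-funct} if and only if each $F_\alpha$ is. Similarly a natural transformation $F\Rightarrow F'$ is the same as a family of natural transformations $F_\alpha\Rightarrow F'_\alpha$, since its components are indexed by objects $x\in M=\bigsqcup M_\alpha$ and naturality is only tested on arrows, all of which lie within a single component. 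This yields an isomorphism of categories
\[
\trans(M)\;\cong\;\prod_{\alpha\in A}\trans(M_\alpha).
\]

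Finally I would check that under these two isomorphisms the functor $\hol_M$ corresponds to $\prod_\alpha \hol_{M_\alpha}$. This is immediate from the definition: parallel transport along a path $\gamma\subset M_\alpha$ only sees the restricted bundle $(P|_{M_\alpha},A|_{M_\alpha})$, and the natural transformation $\hol_M(f)$ assigned to a gauge transformation $f$ is defined componentwise by the restrictions $f|_{P_x}$. Assuming each $\hol_{M_\alpha}$ is an equivalence, the product functor $\prod_\alpha \hol_{M_\alpha}$ is an equivalence, and hence so is $\hol_M$. The only subtlety to be careful about is the verification that the smoothness condition in Definition~\ref{def:transp-funct} really is preserved by both the decomposition and reassembly of functors; this is where one uses that the basepoint $x$ lies in exactly one component and that $\fund(M,x)=\fund(M_\alpha,x)$ carries the same diffeology whether viewed inside $\pithin(M)$ or $\pithin(M_\alpha)$.
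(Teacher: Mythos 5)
Your proof takes essentially the same route as the paper: decompose $M$ into its connected components, identify both $B^\nabla G(M)$ and $\trans(M)$ with the corresponding products over components, and observe that $\hol_M$ is the product of the $\hol_{M_\alpha}$. The paper states the product decompositions and the compatibility of $\hol_M$ with them without proof; you supply the (correct) justification that paths and thin homotopies stay within a single component, so the content is the same and your version is simply more detailed.
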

\begin{proof} Fix a manifold $M$.
We may assume that the set of connected components of $M$ is indexed by a set $I$. Then 
\[
M = \bigsqcup _{\alpha \in I} M_\alpha,
\]
where the $M_\alpha$'s are connected components of $M$.  Observe that 
\[
 B^\nabla G(M) = \prod_{\alpha \in I} B^\nabla G(M_\alpha) \quad \text{ and}
\]
\[
 \trans(M) = \prod_{\alpha \in I} \trans(M_\alpha),
\]
where $\prod$ denotes product of
categories.  It is not hard to check that
 the diagram 
\begin{equation}\label{eq:6blah}
\xy
(-22,6)*+{\prod_{\alpha \in I} B^\nabla G(M_\alpha)}="1";
(22,6)*+{\prod_{\alpha \in I} \trans(M_\alpha)}="2";
(-22,-6)*+{ B^\nabla G(M_\beta)}="3";
(22,-6)*+{\trans(M_\beta)}="4";
 {\ar@{->}^{\hol_M} "1";"2"};
 {\ar@{->}^{\hol_{M_\beta}} "3";"4"};
 {\ar@{->}_{} "2";"4"};
{\ar@{->}_{} "1";"3"};
\endxy
\end{equation}
commutes for all $\beta\in I$.  The result follows from these
observations.
\end{proof}
To prove Theorem~\ref{thm:sub_main} for connected manifolds we
introduce a third category, $B^pG(M)$. To properly define this
category we recall the notion of a (left) groupoid action (see \cite{MacK}, for example).
\begin{definition}
  A (left) {\sf action of a groupoid $\Gamma=\{\Gamma_1\toto \Gamma_o\}$ on
    a set} $M$ consists of a map $\alpha:M\to \Gamma_0$ called the {\sf 
    anchor} and a map
\[
a:\Gamma_1\times_{\fs, \Gamma_0, \alpha }M\to M,\qquad a(\gamma, m) =
\gamma\cdot m
\]
called the {\sf action}  so that
\begin{enumerate}
\item $a(\gamma \cdot m) = \ft(\gamma)$ for all $\gamma \in \Gamma_1$
  and $m\in M$ with $\fs (\gamma) = a(m)$;
\item the identity arrows act trivially: $1_{\alpha(m)}\cdot m = m$
  for all $m\in M$; and
\item $\tau \cdot (\gamma \cdot m) = (\tau \gamma)\cdot m$ for any two
  composable arrows $\tau, \gamma$ and any point $m\in M$ with $\fs(\gamma) =
  \alpha(m)$.
\end{enumerate}
Here, as before, $\ft$ and $ \fs$ are the target and the source maps of the
groupoid $\Gamma$, respectively.
\end{definition}

\begin{definition}%\mbox{}\\[-8pt] 
  Fix a Lie group $G$ and a manifold $M$.  The {\sf category of
    $B^pG(M)$ of principal $G$-bundles over $M$ with smooth actions of
    the thin fundamental groupoid $\pithin(M)$} is defined as follows:
  the objects are smooth (left) actions $a:\calP(M)/_{\sim}\times P\to
  P$ of $\pithin(M)$ on principal $G$-bundles $P\to M$ that commute
  with the right actions of $G$.  Morphisms are $\pithin(M)\times
  G$-equivariant maps. That is, they are maps $f:P\to P'$ of principal
  $G$-bundles so that
\[
f([\gamma]\cdot z) =  [\gamma]\cdot f(z)
\]
 for all $([\gamma], z)\in \calP(M)/_{\sim}\times
  P$.
\end{definition}

There are several reasons for introducing this category.  Note first:
\begin{lemma} \label{lemma:3.14} A connection 1-form $A$ on a
  principal $G$-bundle $P\xrightarrow{\pi}M$ defines a smooth action
  $a$ of the thin fundamental groupoid
  $\pithin(M):=\{\calP(M)/_{\sim}\toto M\}$ on the manifold $P$
  relative to the anchor map $\pi$.
\end{lemma}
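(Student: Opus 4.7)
The plan is to define the action by parallel transport, verify the three groupoid action axioms using standard properties of parallel transport, and then reduce smoothness to Lemma~\ref{lem:5.6}. The main obstacle is the last step, because the source (domain of the action) is a fibered product of a diffeological space with a manifold, so one must unwind the definitions of plots and diffeological structures carefully; the actual smooth content will be imported from Lemma~\ref{lem:5.6}.

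First I would declare the anchor to be $\pi : P \to M$ and define
\[
a : \bigl(\calP(M)/_\sim\bigr) \times_{\fs, M, \pi} P \longrightarrow P, \qquad
a([\gamma], z) := ||^A_\gamma (z),
\]
noting that the fiber-product condition $\fs([\gamma]) = \gamma(0) = \pi(z)$ guarantees that $z$ lies in the starting fiber of $||^A_\gamma$. The map $a$ is well-defined on thin homotopy classes because of Proposition~\ref{prop:equal-transport}: thinly homotopic paths induce equal parallel transport maps. That $a$ satisfies $\pi(a([\gamma],z)) = \ft([\gamma])$ is immediate from Definition~\ref{d:horlift}. The axiom $1_{\pi(z)} \cdot z = z$ holds because the horizontal lift of a constant path is constant. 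The multiplicativity $[\tau]\cdot([\gamma]\cdot z) = ([\tau][\gamma])\cdot z$ follows from the standard identity $||^A_{\tau\gamma} = ||^A_\tau \circ ||^A_\gamma$, which was already invoked in the proof of Theorem~\ref{thm:3.5*}. Commutation with the right $G$-action is $G$-equivariance of parallel transport, which follows from the $G$-invariance of $A$ (the horizontal lift starting at $z\cdot g$ is obtained from the one starting at $z$ by the right translation $R_g$, since $R_g^* A = \mathrm{Ad}_{g^{-1}} A$ preserves the horizontal distribution).

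The remaining step, and the technical heart of the argument, is smoothness of $a$ in the sense of diffeological spaces. I would take an arbitrary plot $\varphi : U \to (\calP(M)/_\sim) \times_M P$, which amounts to a pair of plots $p : U \to \calP(M)/_\sim$ and $q : U \to P$ with $\fs \circ p = \pi \circ q$. Since the diffeology on $\calP(M)/_\sim$ is the pushforward of the diffeology on $\calP(M)$ along the quotient map, after shrinking $U$ to a member of a suitable open cover we may lift $p$ to a plot $\check{F} : U \to \calP(M)$, equivalently a smooth map $F : U \times [0,1] \to M$ whose slices are paths with sitting instances (q.v.\ Definition~\ref{def:pathspacedifeol}). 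The condition $\fs\circ p = \pi\circ q$ becomes $F_0 = \pi \circ q$, so the map $u \mapsto (u, q(u))$ is a smooth section of $F_0^*P \to U$. Composing with the map $\Psi : F_0^*P \to F_1^*P$ of Lemma~\ref{lem:5.6} and with the projection $F_1^*P \to P$, we obtain
\[
U \ni u \;\longmapsto\; ||^A_{\check{F}(u)}(q(u)) \;=\; a(p(u), q(u)),
\]
which is smooth as a composition of smooth maps. Hence $a \circ \varphi$ is smooth for every plot $\varphi$, which is exactly the assertion that $a$ is a smooth map of diffeological spaces. Combining this with the verified groupoid axioms yields a smooth left action of $\pithin(M)$ on $P$ commuting with the right $G$-action, as required.
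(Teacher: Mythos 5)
Your proof is correct and follows essentially the same route as the paper's: define the action by parallel transport, cite Proposition~\ref{prop:equal-transport} for well-definedness, and reduce smoothness to Lemma~\ref{lem:5.6} by lifting a plot of $\calP(M)/_\sim$ to a plot of $\calP(M)$ and composing with the map $\Psi$ on pullback bundles. You are somewhat more explicit than the paper about the local nature of the lift, the $G$-equivariance of $\|^A_\gamma$, and the verification of the groupoid-action axioms, but the decomposition and the key lemma used are identical.
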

\begin{proof}
We define the map   $a: \calP(M)/_{\sim}\times_M P \to P$ by
\[
a([\gamma], z): = ||_\gamma (z)
\]
for all pairs $([\gamma], z)\in \calP(M)/_{\sim} \times_{\fs, M, \pi}
P$, that is, for all pairs $([\gamma], z)$ with $\fs([\gamma]) =
\gamma(0) = \pi(z)$.  Since $||_{1_{\pi(z)}}(z) = z$ and, of any pair
of composable paths $\gamma$ and $\tau$, $||_\gamma \circ ||_\tau =
||_{\gamma\tau}$, $a$ is indeed an action.

To check that the action $a$ is smooth, pick a pair of plots
$\underline{p}:U\to \calP(M)/_{\sim}$ and $r:U\to P$ with $\fs \circ
\underline{p} = \pi\circ r$ (q.v. Construction~\ref{constr:fpdiffeo},
the construction of the fiber product diffeology).  We need to show
that $a\circ (\underline{p},r):U\to P$ is $C^\infty$.  We may assume
that $\underline{p}$ has a global lift $p:U\to \calP(M)$.  Then
\[
a\circ (\underline{p}, r) (u) = ||_{p(u)} (r(u)).
\]
Let $F:U\times [0,1] \to M$ denote the map associated to the plot $p$:
$F(u,t)  := p(u)(t)$. By Lemma~\ref{lem:5.6}, the map $F_0^*P \to
F_1^*P$ given by $(u,z) \mapsto ||_{p(u)} z$ is smooth.  Since $r:U\to
P$ is smooth, $u\mapsto ||_{p(u)} (r(u))$ is smooth as well.
%Let $\hat{\tidle
\end{proof} 

In fact, much more is true.  We will show:
\begin{itemize}
 \item The map that sends a principal
  bundle with connection $(P,A)$ to an action of $\pithin(M)$ on $P$
  extends to an {\em isomorphism} of categories $\tp: B^\nabla G(M)\to
  B^pG (M)$; see Lemma~\ref{lemmaA}.

\item  There is a natural equivalence of
  categories $\rep:B^pG(M) \to \trans (M)$ so that the diagram
\begin{equation} \label{eq:reptp=hol}
\xy
(-15,6)*+{B^\nabla G(M)}="1";
(15,6)*+{\trans(M)}="2";
(0,-10)*+{B^pG(M)}="3";
 {\ar@{->}^{\hol} "1";"2"};
 {\ar@{->}_{\tp} "1";"3"}; 
{\ar@{->}_{\rep} "3";"2"};
\endxy
\end{equation}
commutes; see Lemma~\ref{lemmaB} and Remark~\ref{rmrk:6.*8}.
\end{itemize}
Clearly these two facts imply that $\hol_M: B^\nabla G(M)\to
\trans(M)$ is an equivalence of categories and thereby
Theorem~\ref{thm:sub_main}.  We now proceed to define the relevant
functors.

\begin{definition}[The functor $\rep:B^pG(M) \to \trans(M)$]
\label{def:rep}
 Given an
  action $a: \calP(M)/_{\sim}\times_M P\to P$ of the thin fundamental
  groupoid $\pithin(M)$ on a principal $G$-bundle $P\to M$, define
  the associated transport functor $\rep(a):\pithin(M)\to \gtors$ by
\[
\rep(a)\, (x\xrightarrow{[\gamma]} y):= P_x \xrightarrow{ [\gamma]\cdot-} P_y.
\]
Here $[\gamma]\cdot -$ denotes the action of $[\gamma]$ on the points
of the fiber $P_x$.  Given two actions $a$ and $a'$ on principal $G$-bundles
$P$ and $P'$ over $M$ and a $(\pithin(M)\times G)$-equivariant map $f:P\to P'$,
define a natural isomorphism $\rep(f) :\rep(a)\Rightarrow \rep(a')$
by
\[
\rep(f)_x:= f|_{P_x}: P_x \to P'_{x}
\]
for all $x\in M$.
\end{definition}

\begin{remark}
  Note that, since $a$ is a action, $\rep(a)$ is indeed a
  functor. Furthermore, since $a$ is a {\em smooth} action, the map
\[
\fund(M,x)\times P_x \to P_x,\qquad ([\gamma], z)\mapsto [\gamma]\cdot z
\]
is smooth.  Hence, since the category of diffeological spaces is
Cartesian closed \cite{LaubingerDiss, BH}, the adjoint map
$\fund(M,x) \to \Aut (P_x)$ is smooth.  Therefore $\rep(a)$ is a 
  transport functor.  We conclude that the functor $\rep$ is
well-defined.
\end{remark}

\begin{definition}[the functor $\tp: B^\nabla G(M) \to B^p G (M)$]
  \label{def:tp}
  Given a principal $G$-bundle with connection $(P,A)$ define an
  action $\tp(P,A):\calP(M)/_{\sim}\times_M P\to P$ of $\pithin(M)$ on $P$ by
\[
\tp(P,A)( [\gamma], z) := ||_\gamma^A(z)
\]
for all $([\gamma], z)\in \calP(M)/_{\sim}\times_M P$ (q.v.\
Lemma~\ref{lemma:3.14}).

Equation \eqref{eq:5.1} implies that a map $f:(P,A)\to (P',A')$ of
principal $G$-bundles with connection intertwines the actions $\tp(P,A)$
and $\tp(P',A')$.  Hence $f:P\to P'$ is also a morphism in $B^pG(M)$.
\end{definition}

\begin{remark}\label{rmrk:6.*8}
  It follows easily from the definitions that $\rep\circ \tp = \hol$,
  that is, the diagram \eqref{eq:reptp=hol} commutes.
\end{remark}
We now state:

\begin{lemma}\label{lemmaA} For a  manifold $M$ 
  the functor $\tp:B^\nabla G(M) \to B^pG(M)$ of
  Definition~\ref{def:tp} is an isomorphism of categories.
\end{lemma}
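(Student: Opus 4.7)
The plan is to construct an inverse functor $\tp\inv\colon B^pG(M)\to B^\nabla G(M)$ that is the identity on morphisms and on underlying bundles, and that equips each $(P,a)$ with the unique connection $A$ whose parallel transport recovers $a$.

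Given $(P,a)\in B^pG(M)$, I would build $A$ by specifying its horizontal distribution. For $z\in P$ with $x:=\pi(z)$ and $v\in T_xM$, choose a smooth curve $\alpha\colon(-\delta,\delta)\to M$ with $\alpha(0)=x$ and $\alpha'(0)=v$, and set $F_s(t):=\alpha(s\beta(t))$ using the sitting function $\beta$ of Remark~\ref{rmrk:5.3333}. Each $F_s$ is a path with sitting instances from $x$ to $\alpha(s)$, and $s\mapsto F_s$ is a plot of $\calP(M)$, so by smoothness of $a$ the curve $s\mapsto [F_s]\cdot z$ in $P$ is smooth. Define
\[
v^\uparrow_z\;:=\;\left.\frac{d}{ds}\right|_{s=0}[F_s]\cdot z \;\in\; T_zP.
\]
The properties to verify are: (a) independence of the choice of $\alpha$; (b) linearity in $v$; (c) $d\pi_z(v^\uparrow_z)=v$, so the subspace $H_z:=\{v^\uparrow_z:v\in T_xM\}$ is complementary to the vertical; (d) $(R_g)_*v^\uparrow_z=v^\uparrow_{z\cdot g}$, immediate from $a$ commuting with the right $G$-action; (e) smoothness of $H$ in $z$, which follows from smoothness of $a$ on suitable two-parameter plots of $\calP(M)$. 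Together these produce a smooth $G$-equivariant horizontal distribution on $P$, hence a connection $A$.

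To verify $\tp(P,A)=(P,a)$: for $\gamma\in\calP(M)$ and $z\in P_{\gamma(0)}$ set $\gamma_t(s):=\gamma(t\beta(s))$. The curve $t\mapsto[\gamma_t]\cdot z$ is smooth, projects to $\gamma\circ\beta\sim\gamma$, and by construction of $H$ is horizontal for $A$ (applying the definition of $v^\uparrow$ at each intermediate point via the composition axiom $[\gamma_t]=[\gamma_{t_0,t}]\cdot[\gamma_{t_0}]$); uniqueness of horizontal lifts then yields $||^A_\gamma(z)=[\gamma]\cdot z$. The identity $\tp\inv\circ\tp=\id$ is immediate because a connection is determined by its parallel transport and because, when $a=\tp(P,A)$, the curve $s\mapsto[F_s]\cdot z=||^A_{F_s}(z)$ is precisely the horizontal lift $\alpha^A_z$ of $\alpha$ (up to reparameterization by $\beta$, to which parallel transport is insensitive), so its $s$-derivative at $0$ is the horizontal lift of $v$ for $A$. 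On morphisms, if $f\colon(P,a)\to(P',a')$ is $\pithin(M)\times G$-equivariant, applying $f$ to the smooth curve $s\mapsto[F_s]\cdot z$ gives $df(v^\uparrow_z)=v^\uparrow_{f(z)}$, so $f$ preserves horizontal distributions and hence $f^*A'=A$; conversely, Definition~\ref{def:tp} together with Lemma~\ref{prop:5.17} records that connection-preserving morphisms intertwine the parallel-transport actions.

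The crux, and the step I expect to cost real work, is (a). If $\alpha_1,\alpha_2$ share the same $1$-jet at $0$, the families $F^1_s,F^2_s$ have endpoints $\alpha_1(s)$ and $\alpha_2(s)$ differing at order $s^2$, and any ``defect loops'' one could form between them are in general not thinly null-homotopic, so independence cannot be read off the groupoid axioms alone. My strategy is to embed both choices in a single smooth interpolation $\alpha(s,u)$ with $\alpha(0,u)=x$ and $\partial_s\alpha(0,u)=v$ for every $u\in[0,1]$, form the plot $(s,u)\mapsto F_{s,u}(t):=\alpha(s\beta(t),u)$, and use smoothness of $a$ to produce a $C^\infty$ map $\phi(s,u):=[F_{s,u}]\cdot z$ with $\phi(0,u)=z$ and $\pi\circ\phi(s,u)=\alpha(s,u)$. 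The goal is $\partial_u\partial_s\phi(0,u)\equiv 0$, which I would establish by combining a mixed-partials identity with the composition axiom, writing $[F_{s,u}]$ as an iterated product of $N$ copies of $[F_{s/N,u}]$ (glued by short transition paths determined by the common $1$-jet) and letting $N\to\infty$; this collapses the $u$-dependence of the $s$-derivative to the behaviour of $a$ on arbitrarily short paths, where the thin class is controlled by the common $1$-jet of $\alpha(\cdot,u)$. Linearity (b) and smoothness (e) follow by analogous plot-level arguments.
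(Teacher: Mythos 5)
Your overall architecture — build an inverse $\tp\inv$ by extracting a connection from the $\pithin(M)$-action and show the two compositions are the identity — matches the paper's plan, and your treatment of morphisms and of $\tp\inv\circ\tp=\id$ is essentially the paper's. The genuine gap is exactly where you flag it, at step (a), and your proposed repair does not work. The definition
\[
v^\uparrow_z:=\left.\frac{d}{ds}\right|_{s=0}[F_s]\cdot z
\]
is precisely the ``tempting'' formula \eqref{eq:5.7} which the paper introduces only to reject: it is not clear that the resulting set of vectors is a linear subspace (your (b)), that it depends only on $v$ and not on $\alpha$ (your (a)), or that the putative horizontal curves $s\mapsto[\Upsilon(s)]\cdot z$ stay tangent to the distribution for $s\neq 0$. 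Your strategy for (a) — write $[F_{s,u}]$ as an $N$-fold product of short pieces glued by transition paths and let $N\to\infty$ — is not a proof: you would need to show the $s$-derivatives of those transition pieces vanish uniformly in the limit, which is morally the same vanishing statement you are trying to prove, and the iterated-product limit is not a well-defined operation on thin-homotopy classes or on their derivatives. Moreover your compatibility argument invokes $v^\uparrow$ at intermediate points before (a) is established, which is circular.

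The paper sidesteps all of (a), (b), (e) at once by working in a chart and changing the definition. After transferring to $Q\to\R^n$, one has the globally smooth map $\sigma:\R^n\times\R^n\to\calP(\R^n)$, $\sigma(x,y)(t)=x+\beta(t)(y-x)$, and then $F:\R^n\times Q\to Q$, $F(y,z)=[\sigma(\pi(z),y)]\cdot z$. Since $F(\cdot,z)$ is a smooth section of $\pi$, the subspace $\calH Q_z:=\mathrm{image}\bigl(\partial_1 F(\pi(z),z)\bigr)$ is automatically a linear complement to the vertical and depends smoothly on $z$, with no choices to reconcile; $G$-equivariance is immediate from the fact that the action commutes with $G$. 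The remaining content — that the curves $s\mapsto[\Upsilon(s)]\cdot z$ arising from arbitrary $\gamma$ are $\calH$-horizontal — is proved via Barrett's Lemma~\ref{lem:Barrett} and Corollary~\ref{cor:6.14}: for a smooth family of loops $p(s)$ with $p(0)=[1_x]$ one gets $\frac{d}{ds}\big|_0\, p(s)\cdot z=0$ by an $n$-parameter diagonal/chain-rule trick in $\R^n$ (each coordinate-axis subfamily is a rank-$1$, hence thinly trivial, deformation of the constant loop). This is a finite, coordinate-level argument, and it is where the reduction to $\R^n$ is indispensable; your sketch never passes to a chart and so has no access to it. I would recommend replacing your (a)/(b)/(e) with the paper's localization plus the $\partial_1 F$ definition, and replacing the $N\to\infty$ argument with Barrett's lemma.
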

\begin{lemma}\label{lemmaB} For a connected manifold $M$ 
  the functor $\rep:B^pG(M)\to \trans (M)$ of
  Definition~\ref{def:rep} is an equivalence of categories.
\end{lemma}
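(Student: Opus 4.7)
\textbf{Proof proposal for Lemma \ref{lemmaB}.}
I would show that $\rep$ is fully faithful and essentially surjective. Given two objects $(P, a)$ and $(P', a')$ in $B^pG(M)$, a natural transformation $\alpha: \rep(a) \Rightarrow \rep(a')$ is by definition a family of $G$-torsor maps $\alpha_x: P_x \to P'_x$. These assemble into a single fiberwise set-theoretic map $f: P \to P'$ which is tautologically $G$-equivariant, and the naturality condition with respect to arrows $[\gamma]: x \to y$ in $\pithin(M)$ translates directly into the identity $f([\gamma] \cdot z) = [\gamma] \cdot f(z)$. Thus the only real content of full faithfulness is smoothness of $f$; I would handle this in parallel with essential surjectivity, since both reductions rest on the same local trivializations.

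For essential surjectivity, fix a transport functor $T: \pithin(M) \to \gtors$ and a basepoint $x_0 \in M$ (this is where connectedness enters). Define $P := \bigsqcup_{x \in M} T(x)$ as a set with the obvious projection $\pi: P \to M$, the fiberwise right $G$-action, and the tautological $\pithin(M)$-action $a([\gamma], p) := T([\gamma])(p)$. To give $P$ a smooth structure, cover $M$ by opens $U_\alpha$ on which Lemma~\ref{lem:loc_sections} furnishes sections $\sigma_\alpha: U_\alpha \to \calP(M)/_{\sim}$ of the target map, arranged (by shrinking to convex charts if necessary) so that $\fs \circ \sigma_\alpha$ is constantly some $y_\alpha \in U_\alpha$. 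For each $\alpha$, pick an arrow $[\eta_\alpha]: x_0 \to y_\alpha$ in $\pithin(M)$, and define local trivializations
\[
\phi_\alpha: U_\alpha \times T(x_0) \longrightarrow P|_{U_\alpha}, \qquad (u, p) \longmapsto T\bigl(\sigma_\alpha(u) \, [\eta_\alpha]\bigr)(p),
\]
which are fiberwise $G$-equivariant bijections. A direct computation shows that on $U_\alpha \cap U_\beta$ the transition $\phi_\beta^{-1} \circ \phi_\alpha$ has the form $(u, p) \mapsto (u, T(\theta_{\beta\alpha}(u))(p))$, where $\theta_{\beta\alpha}(u) := [\eta_\beta]^{-1} \, \sigma_\beta(u)^{-1} \, \sigma_\alpha(u) \, [\eta_\alpha]$ lies in $\fund(M, x_0)$.

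The main obstacle is smoothness. The assignment $u \mapsto \theta_{\beta\alpha}(u)$ is a plot $U_\alpha \cap U_\beta \to \fund(M, x_0)$ because $\pithin(M)$ is a diffeological groupoid (Proposition~\ref{thm:gpoidpf}), so multiplication and inversion of smoothly-parametrized arrows are smooth. The smoothness axiom $T|_{\fund(M, x_0)}: \fund(M, x_0) \to \Aut(T(x_0))$ then upgrades this to smoothness of the transition maps, so $P$ inherits a principal $G$-bundle structure, and $\rep(P, a) = T$ on the nose. The same kind of argument shows that the action $a$ is smooth and that the map $f$ of the full faithfulness step is smooth: given compatible plots into $\calP(M)/_{\sim}$ and $P$, composing with the $\sigma_\alpha$ and the fixed $[\eta_\alpha]$ produces plots into $\fund(M, x_0)$, and the smoothness hypothesis on $T$ finishes the job. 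Connectedness is genuinely needed here, since without a single basepoint through which every arrow factors there is no global diffeological group on which to invoke the smoothness of $T$.
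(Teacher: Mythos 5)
Your proposal is correct, and it takes a genuinely different route from the paper. The paper constructs an explicit weak inverse $\assoc:\trans(M)\to B^pG(M)$: it first shows that $\fs\inv(x)\to M$ is a diffeological principal $\fund(M,x)$-bundle (using Lemma~\ref{lem:loc_sections}), proves a reusable lemma that associated bundles $\scrP\times^\calG T$ of diffeological principal bundles over $M$ are honest manifolds, and then sets $\assoc(F):=(\fs\inv(x)\times F(x))/\fund(M,x)$, exhibiting natural isomorphisms $\eta_P:\assoc(\rep(a))\cong P$ and $\varepsilon:\rep(\assoc(F))\cong F$. You instead verify full faithfulness and essential surjectivity head-on, building the candidate bundle by a clutching construction: the same local sections $\sigma_\alpha$ of $\ft$ are used, conjugated by fixed arrows $[\eta_\alpha]:x_0\to y_\alpha$, so that the transition cocycles $\theta_{\beta\alpha}$ land in the single group $\fund(M,x_0)$ and can be fed through $T|_{\fund(M,x_0)}$ to get smooth $\Aut(T(x_0))$-valued transition functions. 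The underlying ideas are the same (connectedness for the single basepoint, the diffeological groupoid structure of $\pithin(M)$, local sections of $\ft$), and the paper's associated-bundle lemma is effectively the clutching construction packaged once and for all. What your version buys is a more elementary, self-contained argument and the convenience that $\rep(P,a)=T$ literally, rather than up to a natural isomorphism $\varepsilon$; what the paper's version buys is modularity and an explicit adjoint, which is convenient if one later wants to manipulate $\assoc$. One caveat worth flagging: you defer two nontrivial smoothness checks --- that the $\pithin(M)$-action $a$ on your constructed $P$ is smooth, and that the fiberwise map $f$ in the full-faithfulness step is smooth --- to ``the same kind of argument.'' Both do go through by working in the trivializations $\phi_\alpha$ and invoking the smooth groupoid operations of $\pithin(M)$ together with the smoothness of $T|_{\fund(M,x_0)}$, but they are precisely the points where the paper's abstract framing (smooth action on the quotient, naturality of $\eta$ and $\varepsilon$) is doing the bookkeeping, and they deserve to be written out rather than asserted.
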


The proofs of these two lemmas take up the rest of the section. Our proof
of Lemma~\ref{lemmaA} is surprisingly fiddly.  We first describe a procedure for building a smooth family of paths
in $\calP(M)/_{\sim}$ from any path on $M$.

\begin{construction}\label{c:upsilon}
  Let $(a,b)$ be an interval containing $0$ and $\gamma:(a, b) \to M$
  a smooth path in a manifold $M$ for $(a,b)$.  We construct a smooth
  path (a plot) 
\[
\Upsilon: (a,b) \to \calP(M)
\]
 as follows.  Let $\beta
  \in C^\infty ([0,1])$ be the function of
  Remark~\ref{rmrk:5.3333}. Define
\begin{equation*}
 \Upsilon (s) \, (t) := \gamma (s\beta(t))
\end{equation*}
for all $t\in [0,1]$, $s\in (a,b)$.  Since the map 
$\hat{\Upsilon}: (a,
b)\times [0,1]\to M$ given by 
\[
 \hat{\Upsilon}(s,t) := \gamma
(s\beta(t)) 
\]
is smooth, the map $\Upsilon$ is indeed a smooth path in the
diffeological space $\calP(M)$ of paths in $M$.  Note that the path
$\Upsilon$ satisfies $\Upsilon(0) = 1_x$, where $1_x$ denotes the
constant path at $x= \gamma (0)$.  Additionally $\Upsilon(s)(0) = x$
and $\Upsilon(s)(1) = \gamma (s)$ for all $s$.

Note also that the composite $q\circ\Upsilon $ is a smooth path in the
space $\calP(M)/_{\sim}$ of arrows of the groupoid $\pithin(M)$ (here,
as before, $q:\calP(M)\to \calP(M)/_{\sim}$ is the quotient map).
\end{construction}

\begin{lemma} \label{lem:6.12}
  For any manifold $M$ the functor $\tp: B^\nabla G(M) \to B^p G(M)$
  is bijective on morphisms and injective on objects.
\end{lemma}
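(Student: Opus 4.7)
The plan is to reduce both assertions to a single structural fact: \emph{parallel transport determines the connection}. Observe first that on morphisms $\tp$ is literally the identity on underlying maps of principal bundles; it merely forgets the connections and records their parallel transport actions. Hence injectivity on morphisms is immediate. Surjectivity on morphisms amounts to proving that any $\pithin(M)\times G$-equivariant map $f:P\to P'$ over $\id_M$ intertwining the two parallel transport actions automatically satisfies $f^*A'=A$, while injectivity on objects is the special case $P=P'$, $f=\id_P$.

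The heart of the argument is therefore the following: if $f:P\to P'$ is $G$-equivariant over $\id_M$ and $f\circ ||^A_\gamma = ||^{A'}_\gamma \circ f$ for every path with sitting instances $\gamma$, then $f$ carries the horizontal distribution $\ker A\subset TP$ into $\ker A'\subset TP'$. Together with $G$-equivariance (which ensures $df$ sends fundamental vertical vectors to fundamental vertical vectors), this forces $f^*A'=A$, and specializing to $f=\id_P$ on $P=P'$ yields $A=A'$. To realize an arbitrary $v\in\ker A_z$ as the velocity of a parallel transport curve, I will pick a smooth curve $\gamma:(-\varepsilon,\varepsilon)\to M$ with $\gamma(0)=\pi(z)$ and $\dot\gamma(0)=d\pi(v)$, apply Construction~\ref{c:upsilon} to get the smooth family $\Upsilon(s)(t)=\gamma(s\beta(t))$ of paths with sitting instances, and check that the $A$-horizontal lift of $\Upsilon(s)$ at $z$ is $t\mapsto \gamma^A_z(s\beta(t))$ (this uses only that horizontal distributions are closed under scalar multiplication of tangent vectors). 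Consequently
\[
\sigma(s):= ||^A_{\Upsilon(s)}(z)=\gamma^A_z(s)
\]
is a smooth $A$-horizontal curve through $z$ with $\dot\sigma(0)=v$. The hypothesis then gives $f\circ\sigma(s)= ||^{A'}_{\Upsilon(s)}(f(z))$, which is the $A'$-horizontal lift of $\gamma$ at $f(z)$; hence $df(v)=\tfrac{d}{ds}\bigl|_{s=0}(f\circ\sigma)(s)$ is $A'$-horizontal, as required.

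The main obstacle I anticipate is not conceptual but the delicate bookkeeping around sitting-instance reparametrizations: one must verify cleanly that the factor $\beta$ built into $\Upsilon$ does not disturb the identification of $\dot\sigma(0)$ with the horizontal lift of $d\pi(v)$. Once that identification is secured, surjectivity on morphisms and injectivity on objects both fall out as two immediate applications of the same horizontal-distribution-agreement statement. Smoothness of $\sigma$ is not a worry, since $\sigma$ is obtained by composing the smooth action $\tp(P,A)$ of Lemma~\ref{lemma:3.14} with the smooth plot $q\circ \Upsilon$ on $\pithin(M)_1$.
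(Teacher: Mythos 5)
Your proposal is correct and follows essentially the same route as the paper: both reduce the problem to showing that a $\pithin(M)$-equivariant bundle map preserves the horizontal distribution, both use Construction~\ref{c:upsilon} together with the reparameterization observation that $||^A_{\Upsilon(s)}(z)=\gamma^A_z(s)$, and both then invoke $G$-equivariance to upgrade ``preserves horizontal distribution'' to $f^*A'=A$. The only cosmetic difference is that you phrase the conclusion pointwise on tangent vectors (via $df(\ker A)\subset\ker A'$) whereas the paper first establishes the curve-level identity $f\circ\gamma^A_z=\gamma^{A'}_{f(z)}$ (equation~\eqref{eq:5.5}) and then differentiates; the ``bookkeeping'' you anticipated around $\beta$ is, as you suspected, harmless, since $\tfrac{d}{dt}\gamma^A_z(s\beta(t))=s\beta'(t)\,\dot\gamma^A_z(s\beta(t))$ remains horizontal.
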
 
\begin{proof}%[Proof of Lemma~\ref{lemmaA}]
  We first argue that the functor $\tp$ is bijective on morphisms.
  That is, for any two principal $G$-bundles with connection $(P,A)$ and
  $(P',A')$ over the manifold $M$, the map
\begin{equation} \label{eq:6.3}
\tp: \Hom_{B^\nabla G(M)}((P,A),(P',A')) \to \Hom_{B^pG(M)} (\tp(P,A),\tp (P',A'))
\end{equation}
is a bijection.  A morphism in $\Hom_{B^pG(M)} (\tp(P,A),\tp (P',A'))$
is a map of principal  bundles $f:P\to P'$ which is also $\pithin(M)$
equivariant (with the two actions defined by the connections $A$ and
$A'$, respectively).  A morphism in $\Hom_{B^\nabla
  G(M)}((P,A),(P',A'))$ is a map of principal bundles $h:P\to P'$ covering the identity on $M$ with
$h^*A' = A$.  Thus the map \eqref{eq:6.3} is a bijection if and only if any
$\pithin(M)$-equivariant map of principal bundles preserves the
respective connections.

Given path $\gamma:[a,b]\to M$, denote by $\gamma^A_z$ its horizontal
lift to $P$ with respect to the connection $A$ which starts at $z\in
P_{\gamma(a)}$.  To prove that a $\pithin(M)$-equivariant map of
principal bundles $f:P\to P'$ preserves connections, it is enough to
show that
\begin{equation}\label{eq:5.5}
f\circ \gamma_z^A = \gamma^{A'}_{f(z)}
\end{equation}
for all curves $\gamma $ in $M$.
This is because given a point $x\in M$ and a vector $v\in T_xM$, we can
choose $\gamma:(-\epsilon, \epsilon) \to M$ with $\gamma (0) = x$ and
$\dot{\gamma}(0) = v$.  Then $\dot{\gamma}^A_z(0)$ is the horizontal
lift of $v$ to $T_zP$ with respect to $A$.  So if  \eqref{eq:5.5} holds,
then 
\[
Df_z (\dot{\gamma}_z^A (0) ) = \dot{\gamma} _{f(z)}^{A'} (0).
\]
Consequently, since $f$ is $G$-equivariant, we have to have % must have 
\[
f^*A' = A.
\]
To prove that \eqref{eq:5.5} holds,  first observe that if $\tau: (-\epsilon,
\epsilon) \to P$  is a curve with $\pi\circ \tau = \gamma$, $\tau (0)
= z$, and 
\[
\tau(s)  = ||_{\gamma|_{ [0,s]}}(z)
\]
for all $s\in (-\epsilon, \epsilon) $, then $\tau$ {\em is} the
horizontal lift $\gamma _z^A$ of $\gamma$.  This is because $\gamma
_z^A$ is the unique curve with $\gamma_z^A (0) = z$ and $\gamma_z^A
(s) =  ||_{\gamma|_{ [0,s]}}(z)$.

By Construction ~\ref{c:upsilon} a path
$\gamma:(-\epsilon,\epsilon)\to M$ gives rise to a a smooth path
$\Upsilon:(-\epsilon,\epsilon)\to \calP(M)$ with $\Upsilon(0)=1_x$
(the constant path at $x=\gamma(0)$), $\Upsilon(s)(0)=x$, and
$\Upsilon(s)(1)=\gamma(s)$ for every $s\in(-\epsilon,\epsilon).$
Consequently
\[
\tau(s) := [\Upsilon(s)]\cdot z
\]
is a smooth path in the principal bundle $\pi:P\to M$ with $\tau(0) =
[1_x]\cdot z = z$ and with $\pi (\tau(s)) = \gamma(s)$.  Moreover, by
definition of the action of $\pithin(M)$ on $P$, the point $\tau(s)\in
P$ is the image of $z$ under parallel transport along the curve
$\nu(t):= \gamma (s\beta(t))$. % (see Construction ~\ref{c:upsilon})
 Since $\nu(t)$ is a ``reparameterization''
of $ \gamma|_{[0,s]}$, $\tau (s) = ||_{\gamma_{[0,s]}}$ for all $s$.
We conclude that the curve $\tau$ is the horizontal lift $\gamma_z^A$
of $\gamma$.  Since $f$ is $\pithin(M)$-equivariant,
\[
f(\tau(s)) = f([\Upsilon(s)]\cdot z) = [\Upsilon(s)] \cdot f(z).
\]
By the same argument, the curve $s\mapsto [\Upsilon(s)] \cdot f(z)$ is
the horizontal lift $\gamma_{f(z)}^{A'}$ of $\gamma$ to $P'$.  Hence 
\eqref{eq:5.5} holds and we conclude that the map \eqref{eq:6.3} is 
a bijection.

The argument above also implies that if $A$ and $A'$ are two connections on
a principal $G$-bundle $P\to M$ that define {\em the same} action of
$\pithin(M)$, then $(id_P)^*A' = A$, that is, $A= A'$.  Therefore, the
functor $\tp$ is injective on objects.
\end{proof} 

It remains to show that the functor $\tp$ is surjective on objects;
that is, an action of the thin fundamental groupoid $\pithin(M)$
on a principal $G$-bundle $P\to M$ defines a connection.  We
prove this in a series of lemmas and corollaries.  The first is a
variant of a lemma due to Barrett \cite{B}.

\begin{lemma}\label{lem:Barrett}
  Let $Q\to \R^n$ be a principal $G$-bundle with a smooth action of the thin
  fundamental groupoid $\pithin(\R^n)$ and $p:(-\epsilon, \epsilon)
  \to \fund(\R^n,x)\subset \calP(\R^n)/_{\sim}$ a smooth family of
  (thin homotopy classes of) loops with $p(0) = [1_x]$, the class of
  the constant loop at $x\in \R^n$.  Then 
\[
\left.\frac{d}{ds}\right|_0 (p(s)\cdot z) = 0
\]
for any point $z$ in the fiber of $Q$ above $x$.
\end{lemma}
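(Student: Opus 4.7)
The plan is to build an auxiliary two-parameter smooth family of loops at $x$ using the radial dilations of $\R^n$, then combine smoothness of the action with an iteration-and-limit argument in the spirit of Barrett.

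First I would shrink $\epsilon$ and lift the plot $p$ to a smooth plot $\tilde p\colon(-\epsilon,\epsilon)\to\calP(\R^n)$ (plots into $\calP(\R^n)/_{\sim}$ lift locally to $\calP(\R^n)$, as is already exploited in the proof of Lemma~\ref{lemma:3.14}), adjusting so that $\tilde p(0)=1_x$. Using the radial dilations $\delta_\lambda\colon\R^n\to\R^n$, $\delta_\lambda(y):=x+\lambda(y-x)$, I would then introduce the two-parameter family
\[
\hat p(s,\lambda)(t) := \delta_\lambda\bigl(\tilde p(s)(t)\bigr),\qquad (s,\lambda)\in(-\epsilon,\epsilon)\times[0,1],
\]
which is manifestly a smooth plot into $\calP(\R^n)$ and whose descent to thin-homotopy classes is a smooth plot into $\calP(\R^n)/_{\sim}$. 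By smoothness of the groupoid action, the composite $\phi(s,\lambda):=[\hat p(s,\lambda)]\cdot z$ is a smooth map into $Q_x$, constant along both coordinate axes: $\phi(s,0)=z$ (since $\delta_0\circ\tilde p(s)=1_x$) and $\phi(0,\lambda)=z$ (since $\tilde p(0)=1_x$).

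After trivialising the $G$-torsor $Q_x$ at $z$, these two constancies together with a Hadamard-lemma-type factorisation produce a smooth $\fg$-valued function $\eta$ with $p(s)\cdot z = \phi(s,1) = z\cdot\exp\bigl(s\,\eta(s)\bigr)$, reducing the claim to $\eta(0)=0$. For this I would use Barrett's subdivision trick: the $N$-fold concatenation $\tilde p(s/N)\,\tilde p(s/N)\cdots\tilde p(s/N)$ acts on $z$ as $z\cdot g(s/N)^N$, where $g(u):=d(z,\tilde p(u)\cdot z)$ (cf.\ Remark~\ref{rmrk:d}), and the standard Lie-theoretic limit gives $g(s/N)^N\to\exp\bigl(s\,g'(0)\bigr)$ as $N\to\infty$. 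On the other hand, each $N$-fold iterate lies in a ball of radius $O(s/N)$ around $x$ and can be smoothly contracted to $x$ via the dilations $\delta_\lambda$; organising these contractions into a single smooth plot $\lambda\mapsto Q(\lambda)\in\calP(\R^n)/_{\sim}$ with $Q(1/N)$ equal to the $N$-fold iterate and $Q(0)=[1_x]$ would, by smoothness of the action, force $\exp\bigl(s\,g'(0)\bigr)=e$ for all $s$ and hence $g'(0)=\eta(0)=0$.

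The main obstacle will be producing that diffeological interpolation between the integer-iterate concatenations and $1_x$: without a genuine smooth plot connecting them, smoothness of the action gives no direct access to the $N\to\infty$ limit. Simple connectivity of $\R^n$ and the global dilation structure fixing $x$ enter essentially here, and this construction is the technical heart of the argument.
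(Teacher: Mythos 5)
Your proposal does not close, and you already put your finger on where it breaks: you need to realize the $N$-fold iterates and the constant loop as values of a \emph{single} smooth plot into $\calP(\R^n)/_{\sim}$, and you have no construction of such a plot.  Smoothness of the action only propagates along actual plots, so without that interpolation the $N\to\infty$ limit is invisible to the diffeological structure, and $\exp(s\,g'(0))=e$ does not follow.  Note also that the dilation-family step is a red herring: constancy of $\phi(s,\lambda)$ on the two coordinate axes says nothing about $\partial_s\phi(0,1)$ (take $\phi(s,\lambda)=s\lambda$ as a counterexample), and the Hadamard factorization $g(s)=\exp(s\,\eta(s))$ holds for any smooth $g$ with $g(0)=e$ — it buys nothing toward $\eta(0)=0$.

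The paper's proof is shorter and avoids the limit entirely by exploiting the coordinates of $\R^n$ rather than its dilations.  Lift $p$ to a plot $\tilde p$ with components $\hat p_j(s,t)$ satisfying $\hat p_j(0,t)=x_j$, and pass to the $n$-parameter plot $P(s_1,\dots,s_n)(t):=(\hat p_1(s_1,t),\dots,\hat p_n(s_n,t))$.  For each fixed $s$ the coordinate-split loop $P(0,\dots,s,\dots,0)$ lies in a single affine line through $x$, hence is thinly null-homotopic (a 1-dimensional image means the straight-line contraction has rank $\le 1$).  Therefore $F(s_1,\dots,s_n):=[P(s_1,\dots,s_n)]\cdot z$ is a smooth $Q$-valued map that is \emph{constant on each coordinate axis}, so every $\partial F/\partial s_j$ vanishes at $0$, and the chain rule applied to $s\mapsto F(s,\dots,s)=p(s)\cdot z$ gives $\frac{d}{ds}\big|_0(p(s)\cdot z)=\sum_j\partial_{s_j}F(0)=0$.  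The key idea you are missing is this coordinate splitting combined with the thin contractibility of loops in a 1-manifold; once that is seen, no subdivision or limit argument is needed.
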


\begin{proof}
  We may assume that the path $p:(-\epsilon, \epsilon)
  \to \calP(\R^n)/_{\sim}$ has a global lift
  $\tilde{p}:(-\epsilon, \epsilon) \to \calP(\R^n)$.   Then
\[
(\tilde{p}(s))(t) = (\hat{p}_1(s,t),\ldots, \hat{p}_n(s,t))
\]
for some smooth functions $\hat{p}_j: (-\epsilon, \epsilon) \times
[0,1]\to \R$ with $\hat{p}_j (0,t) = x_j$ for all $t$.
Consider the map  $P:(\epsilon, \epsilon)^n\to \calP(\R^n)$  defined by
\[
P(s_1,\ldots, s_n)(t) = (\hat{p}_1(s_1,t),\ldots, \hat{p}_n(s_n,t));
\]
it is also a plot for $\calP(\R^n)$.  We then have
\[
p(s) = q\circ P\,(s,\ldots,s)\, =[P(s,\ldots,s)], 
\]
where $q:\calP(\R^n) \to \calP(\R^n)/_{\sim}$ is the quotient map.
Since the action of $\pithin(\R^n)$ on the bundle $Q$ is smooth, the map
\[
F:(-\epsilon, \epsilon)^n\to Q, \qquad F(s_1,\ldots, s_n) :=
[P(s_1,\ldots,s_n)]\cdot z
\]
is smooth.  For each $s\in (-\epsilon, \epsilon)$, the paths $s\mapsto
P(s,0,\ldots, 0), s\mapsto P(0,s, \ldots, 0), \ldots ,s\mapsto P(0,
\ldots,0, s) $ are all thinly homotopic to the constant path $1_x$.

It follows that
\[
\frac{\partial F}{\partial s_j} (0,\ldots, 0) = 0
\]
for all $j$.   

By the chain rule
\[
\left.\frac{d}{ds} (p(s)\cdot z)\right|_0= \left.\frac{d}{ds}\right|_0
F(s,\ldots, s) =\sum_{j=0}^n \frac{\partial F}{\partial s_j} (0,\ldots, 0)\cdot 1
= 0.
\]
\end{proof}
\begin{corollary}\label{cor:6.14}
  Let $Q\to \R^n$ be a principal $G$-bundle with a smooth action of the thin
  fundamental groupoid $\pithin(\R^n)$ as above and $\tau, \tau':
  (-\epsilon, \epsilon) \to \fund(\R^n,x)\subset \calP(\R^n)/_{\sim}$
  be two smooth maps with $\tau(0) = \tau'(0) = [1_x]$, $x\in \R^n$, and 
$\ft(\tau (s)) = \ft(\tau'(s))$ for all $s\in  (-\epsilon, \epsilon) $.
Then
\[
\left.\frac{d}{ds}\right|_0 (\tau (s)\cdot z) = 
\left.\frac{d}{ds}\right|_0 (\tau'(s)\cdot z)
\]
for any point $z$ in the fiber of $Q$ above $x$.
\end{corollary}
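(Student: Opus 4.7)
The plan is to reduce to Lemma~\ref{lem:Barrett} by forming a ``difference loop'' $p(s) := \tau'(s)\inv\tau(s)$ at $x$ and then applying the chain rule to the resulting factorization $\tau(s)\cdot z = \tau'(s)\cdot(p(s)\cdot z)$.

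First, I would check that $p:(-\epsilon,\epsilon)\to\fund(\R^n,x)$ is a well-defined smooth map with $p(0) = [1_x]$. The hypothesis $\ft(\tau(s)) = \ft(\tau'(s))$ guarantees that the composition $\tau'(s)\inv\tau(s)$ is composable for every $s$, and since $\pithin(\R^n)$ is a diffeological groupoid (Appendix~\ref{app:1}), inversion and multiplication are smooth; hence $p$ is a smooth path of loops at $x$, and $p(0) = [1_x]\inv[1_x] = [1_x]$. Lemma~\ref{lem:Barrett} therefore yields $\left.\frac{d}{ds}\right|_0(p(s)\cdot z) = 0$ for every $z$ in the fiber $Q_x$ of $Q$ over $x$.

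Next, I would use the associativity of the groupoid action to write $\tau(s)\cdot z = \tau'(s)\cdot(p(s)\cdot z)$ and then differentiate via the chain rule applied to the smooth map $\Phi:(-\epsilon,\epsilon)\times Q_x\to Q$ defined by $\Phi(s,w) := \tau'(s)\cdot w$. Setting $Z(s) := p(s)\cdot z$, so that $Z(0) = z$ and $Z'(0) = 0$ by the previous step, I obtain
\[
\left.\frac{d}{ds}\right|_0(\tau(s)\cdot z) = \left.\frac{d}{ds}\right|_0\Phi(s,Z(s)) = \left.\frac{d}{ds}\right|_0\Phi(s,z) = \left.\frac{d}{ds}\right|_0(\tau'(s)\cdot z),
\]
where the middle equality is exactly the vanishing of the $Z'(0)$ contribution in the chain rule.

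The only (minor) obstacle is confirming that $\Phi$ is smooth on the submanifold $(-\epsilon,\epsilon)\times Q_x\subset (-\epsilon,\epsilon)\times Q$ so that the ordinary manifold chain rule applies; this is immediate from the smoothness of the $\pithin(\R^n)$-action on $Q$ together with smoothness of the plot $s\mapsto \tau'(s)$, both of which are available by hypothesis.
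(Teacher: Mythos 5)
Your proposal is correct and follows essentially the same strategy as the paper: form the difference loop (the paper takes $p(s) = \tau(s)\inv\tau'(s)$ while you take $p(s) = \tau'(s)\inv\tau(s)$, a cosmetic swap), apply Lemma~\ref{lem:Barrett} to kill its derivative at $s=0$, and split the derivative via the chain rule (the paper packages this as a two-variable map $F(s_1,s_2)=\tau(s_1)\cdot(p(s_2)\cdot z)$ restricted to the diagonal, you as $\Phi(s,Z(s))$ with $Z(s)=p(s)\cdot z$ — the same computation).
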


\begin{proof}
  By assumption, $p(s) = \tau(s)\inv \tau'(s)$ is a loop for each
  $s\in (-\epsilon, \epsilon)$ and $p(0)$ is the constant loop
  $[1_x]$.  Note that 
\[
\tau' (s) = \tau (s)\cdot \tau (s) \inv\cdot \tau' (s) = 
\tau (s) \cdot p(s).
\]
Hence 
\[
\left.\frac{d}{ds}\right|_0 (\tau'(s)\cdot z) = \left.\frac{d}{ds}\right|_0 
( \tau (s) \cdot (p(s) \cdot z))
\]
Since the action of $\pithin(M)$ on $Q$ is smooth the map 
 $F:(-\epsilon, \epsilon)^2 \to Q$ defined by 
\[
F(s_1, s_2) = \tau (s_1) \cdot (p(s_2) \cdot z)
\]
is smooth.  By the chain rule
\[
\left. \frac{dF}{ds} (s,s) \right|_0 = \frac{\partial F}{\partial s_1}
(0,0) + \frac{\partial F}{\partial s_2} (0,0).
\]
Hence
\[
\left.\frac{d}{ds}\right|_0 (\tau'(s)\cdot z) = \left.\frac{d}{ds}\right|_0 
( \tau (s) \cdot (p(0) \cdot z)) +  \left.\frac{d}{ds}\right|_0 
( \tau (0) \cdot (p(s) \cdot z)).
\]
By Lemma~\ref{lem:Barrett} the second term is zero, and we are done.
\end{proof}

\begin{lemma} \label{lemma:5.34} Let $\pi:Q\to \R^n$ be a principal
  $G$-bundle. A smooth action $a:(\calP(\R^n)/_{\sim})\times_{\fs, \R^n, \pi}
  Q\to Q$ of the thin fundamental groupoid $\pithin(\R^n)$ on $Q$
  defines a connection $A$ (which is necessarily unique by
  Lemma~\ref{lem:6.12}) so that for any path $\gamma:[0,1] \to \R^n$ and
  any point $z$ in the fiber $Q_x$ above $x= \gamma (0)$
\[
[\gamma]\cdot z = \gamma_z^A(1).
\]
Here, as before, $\gamma_z^A:[0,1]\to Q$ is a lift of $\gamma$ to $Q$
which is $A$-horizontal and starts at the point $z$.
\end{lemma}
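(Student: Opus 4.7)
Plan: I will construct the connection $A$ explicitly in a trivialization and verify directly that its parallel transport reproduces the given action; uniqueness is already granted by Lemma~\ref{lem:6.12}. Since $\R^n$ is contractible, fix a global smooth section $\sigma:\R^n\to Q$ and identify $Q\cong\R^n\times G$ via $(y,g)\mapsto\sigma(y)\cdot g$. The action then encodes a function $h:\calP(\R^n)/_{\sim}\to G$ by $[\gamma]\cdot\sigma(\gamma(0))=\sigma(\gamma(1))\cdot h([\gamma])$, multiplicative with $h([1_y])=e$. For $(y,u)\in\R^n\times\R^n$ let $\sigma_{u,y}(t):=y+\beta(t)u$ be the straight-line path with sitting instances of Construction~\ref{c:upsilon}; then $(y,u)\mapsto[\sigma_{u,y}]$ is a plot into $\calP(\R^n)/_{\sim}$, and combining with the smoothness of the action shows $\Psi(y,u):=h([\sigma_{u,y}])$ is a smooth map $\R^n\times\R^n\to G$. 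I define $\omega\in\Omega^1(\R^n,\fg)$ by $\omega_x(v):=-\partial_u\Psi(x,0)(v)$ and set $A:=g^{-1}dg+\mathrm{Ad}(g^{-1})\omega$ on the trivialization.

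The crux is the local identity
\[
\left.\frac{d}{ds}\right|_{0} h\bigl([\gamma|_{[0,s]}]\bigr)=-\omega_x(v)\qquad\text{whenever }\gamma(0)=x,\ \dot\gamma(0)=v
\]
(with $\gamma|_{[0,s]}$ reparametrized via $\beta$ to have sitting instances, as in Remark~\ref{rmrk:5.3333}). Granted this, applying the identity at each $s_0$ to the shifted path $s\mapsto\gamma(s_0+s)$ and invoking the multiplicativity of $h$ yields the horizontality ODE of $A$ for the candidate lift $\tilde\gamma(s):=[\gamma|_{[0,s]}]\cdot z$, hence $\tilde\gamma=\gamma_z^A$ and in particular $[\gamma]\cdot z=\gamma_z^A(1)$. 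To prove the identity, let $\gamma^{\mathrm{ref}}(s):=x+sv$ and let $\mu(s)$ be the straight-line path with sitting instances from $\gamma(s)$ to $\gamma^{\mathrm{ref}}(s)$. Since $\gamma(s)-\gamma^{\mathrm{ref}}(s)=s^2\rho(s)$ with $\rho$ smooth (Taylor), $\mu$ is a smooth family with $\mu(0)=1_x$, and
\[
\lambda(s):=[\gamma^{\mathrm{ref}}|_{[0,s]}]^{-1}\cdot[\mu(s)]\cdot[\gamma|_{[0,s]}]
\]
is a smooth family in $\fund(\R^n,x)$ with $\lambda(0)=[1_x]$. Lemma~\ref{lem:Barrett} applied to $\lambda$ acting on $\sigma(x)$, combined with the Leibniz rule for smooth $G$-valued curves through $e$, unpacks to
\[
\left.\tfrac{d}{ds}\right|_0 h\bigl([\gamma|_{[0,s]}]\bigr)=\left.\tfrac{d}{ds}\right|_0 h\bigl([\gamma^{\mathrm{ref}}|_{[0,s]}]\bigr)-\left.\tfrac{d}{ds}\right|_0 h\bigl([\mu(s)]\bigr).
\]
The first term on the right equals $-\omega_x(v)$ by definition of $\omega$. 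Writing $\mu(s)=\sigma_{-s^2\rho(s),\,\gamma(s)}$ and applying the chain rule to the smooth map $\Psi$ shows the second term equals $\partial_y\Psi(x,0)(v)+\partial_u\Psi(x,0)(0)=0$, since $\Psi(y,0)=h([1_y])=e$ is independent of $y$ and $s\mapsto-s^2\rho(s)$ has zero derivative at $s=0$.

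The principal obstacle is this bridging step: showing that the quadratically small correction path $\mu(s)$ contributes nothing to the first-order variation of $h$ requires pairing Lemma~\ref{lem:Barrett} with a two-variable chain-rule analysis of $\Psi$. After the identity is in hand, the smoothness and $G$-equivariance of $A$, and the extension of the local identity at $s=0$ to arbitrary $s_0$ via the shifting and multiplicativity argument, are routine and use no input beyond what was already developed for $\Psi$.
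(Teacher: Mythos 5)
Your proof is correct, and it is essentially the paper's argument rendered in a global trivialization: the paper takes the straight-line plot $\sigma(x,y)(t)=x+\beta(t)(y-x)$, uses it to define a horizontal distribution $\calH Q_z:=\mathrm{image}\,\partial_1 F(\pi(z),z)$ where $F(y,z):=[\sigma(\pi(z),y)]\cdot z$, and then verifies that the candidate lift $s\mapsto[\Upsilon(s)]\cdot z$ is tangent to $\calH$ by comparing it pointwise to the straight-line family via Corollary~\ref{cor:6.14}, which is itself Lemma~\ref{lem:Barrett} packaged for two families with the same endpoints. Your construction produces exactly the kernel of this $\calH$: writing $Q\cong\R^n\times G$ via a global section, your $\Psi(y,u)=h([\sigma_{u,y}])$ is the $G$-component of the paper's $F$, and $\omega_x(v)=-\partial_u\Psi(x,0)(v)$ is the local connection form whose kernel is $\calH$. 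Where the paper invokes Corollary~\ref{cor:6.14} you instead build the correction loop $\lambda(s)=[\gamma^{\mathrm{ref}}|_{[0,s]}]^{-1}[\mu(s)][\gamma|_{[0,s]}]$ by hand and apply Lemma~\ref{lem:Barrett} directly together with the first-order product rule in $G$; this is the same bridging idea, traded for an explicit Taylor/Hadamard factorization $\gamma(s)-\gamma^{\mathrm{ref}}(s)=s^2\rho(s)$ to see that $\mu(s)$ contributes nothing at first order. The shifting-plus-multiplicativity step you use to propagate the $s_0=0$ identity to all $s_0$ also mirrors the paper's passage from $\tau(s)=[\Upsilon(s+s_0)][\Upsilon(s_0)]^{-1}$ to $\tau'(s)=[\sigma(\gamma(s_0),\gamma(s+s_0))]$. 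Net: same mechanism throughout; your version buys an explicit formula for $A$ in a trivialization at the cost of carrying the $\mathrm{Ad}$-bookkeeping, while the paper's distribution-level formulation avoids the Lie-algebra computation.
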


\begin{proof}
  As we have seen in the proof of Lemma~\ref{lem:6.12}, an
  action of $\pithin(\R^n)$ on a principal bundle $Q\to \R^n$ allows
  us to lift curves of the form $\gamma :(-\epsilon, \epsilon)\to
  \R^n$ to $Q$.  Thus, given a point $x\in \R^n$ and a point $z\in Q$
  in the fiber above $x$, it is tempting to define the horizontal
  subspace $\calH Q_z \subset T_z Q$ by
\begin{equation} \label{eq:5.7}
  \calH Q_z := \left\{ 
    \left. \left. \frac{d}{ds}\right|_{s=0} [\Upsilon (s)]\cdot z \quad 
    \right| \quad  \Upsilon(s)(t)  = \gamma (s\beta(t)),\quad 
\gamma:(-\epsilon, \epsilon) \to \R^n, \gamma (0) = x
  \right\}.
\end{equation}
Here as before $\Upsilon$ is the smooth path in $\calP(M)$ constructed
from the path $\gamma$ (q.v.\ Construction ~\ref{c:upsilon}).
However, it is not clear that \eqref{eq:5.7} defines a vector space.
Nor is it clear that curves of the form $s\mapsto [\Upsilon(s)]\cdot
z$ are tangent to the purported distribution $\calH$ when $s\not =0$.
We therefore proceed a little differently.

Consider the map 
\begin{equation}\label{eq:6.777}
\sigma: \R^n \times \R^n \to \calP(\R^n),  
\qquad \sigma (x,y) (t) := x + \beta(t) (y-x),
\end{equation}
for all $t\in [0,1]$.  Here as before $\beta\in C^\infty ([0,1])$ is
the function of Remark~\ref{rmrk:5.3333}. The map $\sigma$ is
smooth since the map $\hat{\sigma} (x,y, t) := x + \beta(t) (y-x)$ is
smooth.  We denote the induced plot on the space of arrows of the thin
fundamental groupoid $\pithin(\R^n)$ by $[\sigma]$.  By
construction, $[\sigma(x,y)]$ is the thin homotopy class of the
straight line paths from $x$ to $y$ with $[\sigma(x,x)] =
[1_x]$, the class of the constant path.

Next consider the smooth map 
\[
F:\R^n\times Q\to Q, \qquad F(y,z):= [\sigma(\pi(z), y)]\cdot z.
\]
By construction 
\[
\pi (F(y,z)) = y.
\]
Thus for each fixed $z\in P$ the map 
\[
F(\cdot, z):\R^n \to Q
\]
is a smooth section of $\pi:P\to \R^n$
with 
\[
F(\pi(z), z) =
[\sigma (\pi(z), \pi(z))]\cdot z = [1_{\pi(z)}]\cdot z = z.
\]
  Denote the derivative of this section at $y\in \R^n$ by $\partial _1
  F(y,z)$.  By definition, the derivative $\partial _1 F(\pi(z),z)$
  is a linear map from $T_{\pi(z)}\R^n$ to $T_z Q$.  We define
\[
\calH Q_z:= \mathrm{image }\, \left( \partial _1
    F(\pi(z),z): T_{\pi(z)}\R^n \rightarrow T_zQ\right).
\]
Since $y \mapsto F(y,z)$ is a section of $\pi$, the image of its
differential at every point is a subspace complementary to the
vertical bundle of $\pi: Q\to \R^n$.  Since $F:\R^n\times Q\to Q$ is
smooth, the map $z\mapsto \partial _1 F(\pi(z), z)$ is smooth.  It
follows that the subspace $\calH Q_z$ depends smoothly on $z\in Q$.
Since the action of $\pithin(\R^n)$ on $Q$ commutes with the action of
$G$,
\[
F(y,z\cdot g)=  [\sigma(\pi (z\cdot g), y)] \cdot (z\cdot g) 
= ([\sigma(\pi (z), y)] \cdot z)\cdot g = F(y,z) \cdot g 
\]
for any $y\in \R^n$, $z\in Q$ and $g\in G$. Consequently, $\calH Q\subset
TP$ is a $G$-invariant distribution.  
Denote the corresponding connection 1-form by $A$.  It remains to
check that $ [\gamma]\cdot z = \gamma_z^A(1)$ for a path
$\gamma:[0,1] \to \R^n$ and any point $z$ in the fiber $Q_{\gamma(0)}$ above $
\gamma (0)$.

Since $\gamma$ is smooth, there is $\epsilon>0$ and an extension of
$\gamma$ to a smooth map from $(-\epsilon, 1+ \epsilon )$ to $\R^n$.
We denote the extension by the same symbol $\gamma$.  
Consider the corresponding path $\Upsilon: (-\epsilon, 1+\epsilon) \to
\calP(\R^n)$ satisfying $\Upsilon(s)(0)=\gamma(0)$ and
$\Upsilon(s)(1)=\gamma(s)$ for any $s\in(-\epsilon, 1+\epsilon)$
(Construction ~\ref{c:upsilon}).

We want to show that
\[
\left. \frac{d}{ds} \right|_{s= s_0} [\Upsilon(s)]\cdot z \in
\calH_{[\Upsilon(s)]\cdot z} Q
\]
for any $s_0 \in [0,1]$, and any $z\in Q_{\gamma(0)}$.   Note that, by
Construction ~\ref{c:upsilon},
\[
\pi ([\Upsilon(s)]\cdot z) = \ft ([\Upsilon (s)])  = \Upsilon (s)(1) =
\gamma (s).
\]
Set $z_0:= [\Upsilon (s_0)] \cdot z$.  Then:
\begin{align*}
\left. \frac{d}{ds} \right|_{s= s_0} [\Upsilon(s)]\cdot z 
&=\left. \frac{d}{ds} \right|_{s= 0} [\Upsilon(s+ s_0)]\cdot z \\ 
&=\left. \frac{d}{ds} \right|_{s= 0} ([\Upsilon(s+ s_0)][\Upsilon (s_0)]\inv
)\cdot( [\Upsilon (s_0)]
\cdot z) \\ 
&=\left. \frac{d}{ds} \right|_{s= 0} ([\Upsilon(s+ s_0)][\Upsilon (s_0)]\inv
  )\cdot z_0
\end{align*}
We next consider two paths in $\calP(\R^n)/_{\sim}$ defined on
$(-\epsilon, \epsilon)$:
\[
\tau (s) = [\Upsilon(s+ s_0)][\Upsilon (s_0)]\inv
\]
and 
\[
\tau'(s) =[\sigma (\gamma (s_0), \gamma (s+s_0))],
\]
where $\sigma$ is defined by \eqref{eq:6.777}.  These two paths
 satisfy the hypotheses of Corollary~\ref{cor:6.14}.
Hence
\begin{equation}\label{eq:6.8}
\left. \frac{d}{ds} \right|_{s= 0} \tau (s) \cdot z_0 =
\left. \frac{d}{ds} \right|_{s= 0} \tau' (s) \cdot z_0. 
\end{equation}
By construction of the path $\tau'$ and the distribution $\calH$, the
right hand side of \eqref{eq:6.8} is a vector in $\calH _{z_0}
Q=\calH_{[\Upsilon(s)]\cdot z} Q$.  Hence $\left. \frac{d}{ds}
\right|_{s= s_0} [\Upsilon(s)]\cdot z =\left. \frac{d}{ds} \right|_{s=
  0} \tau (s) \cdot z_0\in \calH_{[\Upsilon(s)]\cdot z} Q$ as well.
\end{proof}

\begin{lemma} \label{lemma:5.34+} Let $\pi:Q\to M$ be a principal
  $G$-bundle. An action $a:(\calP(M)/_{\sim})\times_{\fs, \R^n, \pi}
  Q\to Q$ of the thin fundamental groupoid $\pithin(M)$ on $Q$ defines
  a unique connection $A$ so that for any path $\gamma:[0,1] \to M$
  and any point $z$ in the fiber $Q_x$ above $x= \gamma (0)$
\[
[\gamma]\cdot z = \gamma_z^A(1).
\]
\end{lemma}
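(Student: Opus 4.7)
The plan is to reduce the statement to Lemma~\ref{lemma:5.34} by working in local charts and gluing. Cover $M$ by open sets $\{U_\alpha\}_{\alpha\in I}$, each equipped with a diffeomorphism $\varphi_\alpha: U_\alpha \xrightarrow{\sim} \R^n$. By the functoriality $\pithin:\Man\to \Difgpd$ established in Appendix~\ref{app:1}, each inclusion $\iota_\alpha:U_\alpha \hookrightarrow M$ induces a smooth morphism of diffeological groupoids $(\iota_\alpha)_*:\pithin(U_\alpha)\to \pithin(M)$. Pulling the given action $a$ back along $(\iota_\alpha)_*$ yields a smooth action $a_\alpha$ of $\pithin(U_\alpha)$ on the restricted bundle $Q|_{U_\alpha}\to U_\alpha$. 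Transporting via $\varphi_\alpha$ and applying Lemma~\ref{lemma:5.34} to each $U_\alpha$, we obtain a unique connection $A_\alpha$ on $Q|_{U_\alpha}$ such that
\[
[\gamma]\cdot z = \gamma_z^{A_\alpha}(1)
\]
for every path with sitting instances $\gamma:[0,1]\to U_\alpha$ and every $z\in Q_{\gamma(0)}$.

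Next I glue the $A_\alpha$'s. On an overlap $U_\alpha\cap U_\beta$ cover by open subsets $W$ each diffeomorphic to $\R^n$. Both $A_\alpha|_W$ and $A_\beta|_W$ are connections on $Q|_W$ whose associated $\pithin(W)$-actions (via parallel transport) agree with the restriction of $a$ to $\pithin(W)$. By the uniqueness clause in Lemma~\ref{lemma:5.34} applied to $W$, we conclude $A_\alpha|_W = A_\beta|_W$, and varying $W$ gives $A_\alpha|_{U_\alpha\cap U_\beta}=A_\beta|_{U_\alpha\cap U_\beta}$. Therefore the local 1-forms patch to a globally defined connection $A$ on $Q\to M$.

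It remains to verify the formula $[\gamma]\cdot z=\gamma_z^A(1)$ for an arbitrary path $\gamma\in \calP(M)$ and $z\in Q_{\gamma(0)}$. By compactness of $[0,1]$ choose a subdivision $0=t_0<t_1<\cdots<t_k=1$ and indices $\alpha_1,\dots,\alpha_k$ so that $\gamma([t_{i-1},t_i])\subset U_{\alpha_i}$. Using the reparameterization $\beta$ of Remark~\ref{rmrk:5.3333} on each subinterval one produces paths $\gamma_i\in \calP(U_{\alpha_i})$ with sitting instances such that, after concatenation in $\pithin(M)$,
\[
[\gamma]=[\gamma_k]\cdots[\gamma_1].
\]
On each piece $[\gamma_i]$ acts on $Q|_{U_{\alpha_i}}$ as $A_{\alpha_i}$-parallel transport, equivalently $A$-parallel transport. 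Composing yields $[\gamma]\cdot z=\gamma_z^A(1)$. Uniqueness of $A$ is inherited from Lemma~\ref{lemma:5.34}: if $A$ and $A'$ both satisfy the formula, then in every chart $U_\alpha$ both define the same parallel transport along every smooth curve, hence the same horizontal distribution, hence $A=A'$.

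The main technical obstacle is the bookkeeping in the last step: ensuring that the reparameterized segments $\gamma_i$ are genuine paths with sitting instances, that their concatenation is thinly homotopic to $\gamma$, and that the restriction of the global action $a$ to $\pithin(U_\alpha)$ really coincides with the action pulled back through $(\iota_\alpha)_*$. Once this compatibility is in hand, the refinement argument on overlaps (covering $U_\alpha\cap U_\beta$ by Euclidean balls in order to invoke the uniqueness clause of Lemma~\ref{lemma:5.34}, which is only stated for $\R^n$) becomes routine.
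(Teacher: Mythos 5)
Your proposal is correct and follows essentially the same route as the paper: cover $M$ by Euclidean charts, transfer the restricted action to $\R^n$, apply Lemma~\ref{lemma:5.34} to produce local connections, glue by uniqueness (which ultimately rests on Lemma~\ref{lem:6.12}), and verify the transport formula by concatenation. The paper's own proof is considerably more terse --- it dispenses with the overlap and subdivision bookkeeping by simply observing that uniqueness on each chart makes the local definitions compatible --- but the extra details you supply (refining overlaps by Euclidean balls to invoke the $\R^n$-version of the uniqueness clause, and reparameterizing segments with $\beta$ so that their concatenation is thinly homotopic to the original path) are exactly the steps the paper leaves implicit, and they are correct.
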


\begin{proof}
  For any point $x\in M$ there is a coordinate chart $\varphi:U\to
  \R^n$ on $ M$ with $x\in U$ and $\varphi$ a diffeomorphism.  Since
  the desired connection $A$ would have to be unique (see
  Lemma~\ref{lem:6.12}), it is enough to define it on restrictions
  $Q|_U$ where $U\subset M$ is a domain of a diffeomorphism
  $\varphi:U\to \R^n$.
 
  The action of $\pithin(M)$ on $Q$ defines a smooth action of
  $\pithin(U)$ on $Q|_U$.  The diffeomorphism $\varphi$ allows us to
  transfer this action to an action of $\pithin(\R^n)$ on the
  principal $G$-bundle $(\varphi\inv )^*Q\to \R^n$.  By
  Lemma~\ref{lemma:5.34}, the action of $\pithin(\R^n)$ defines a
  connection on $(\varphi\inv )^*Q\to \R^n$.  Its pullback to
  $Q|_U\to U$ defines the restriction of the desired connection $A$ to
  $Q|_U$.
\end{proof}

\begin{proof}[Proof of \protect{Lemma~\ref{lemmaA}}]
  By Lemma~\ref{lem:6.12} the functor $\tp$ is bijective on morphisms
  and injective on objects.  By Lemma~\ref{lemma:5.34+} it is also
  surjective on objects.  Therefore, $\tp$ is an isomorphism of
  categories.
\end{proof}

To prove Lemma~\ref{lemmaB} it will be convenient for us to define
principal diffeological bundles over manifolds. Note that our definition is 
 different from the one in \cite{IZ}.
\begin{definition}\label{principalbund}
  Let $\calG$ be a diffeological group.  A diffeological space $\scrP$
  is a {\sf principal $\calG$-bundle over a manifold $M$} if the following
  three conditions hold:
\begin{enumerate}
\item there is a surjective map $\varpi:\scrP\to M$ which has local
  sections: for any point $x\in M$, there is a neighborhood $U$ of $M$
  and a smooth section $\sigma:U\to \scrP$ of $\varpi$;
 \item there is smooth right action of $\calG$ on $\scrP$;
 \item the map $\psi: \scrP\times \calG\to \scrP\times_{\varpi,
     M,\varpi}\scrP$ given by $\psi (z, g) :=  (z, z\cdot g)$ is an
   isomorphism of diffeological spaces.
\end{enumerate}
\end{definition}
\begin{lemma}
  Let $M$ be a connected manifold.  Then for any point $x\in M$ the fiber
  $\fs\inv (x)$ of the source map $\fs:\calP/_{\sim}\to M$ of the
  groupoid $\pithin(M)$ with the subspace diffeology
  is a principal $\fund(M,x)$ bundle over $M$; the 
  projection $\varpi: \fs\inv (x) \to M$ is the restriction of the
  target map $\ft:\calP/_{\sim}\to M$: 
\[
\varpi = \ft|_{\fs
    \inv(x)}.
\]
\end{lemma}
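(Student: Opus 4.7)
The plan is to verify the three conditions of Definition~\ref{principalbund} for $\scrP := \fs\inv(x)$ (with the subspace diffeology), structure group $\calG := \fund(M,x)$, and projection $\varpi = \ft|_{\scrP}$, leaning throughout on the fact (Proposition~\ref{thm:gpoidpf}) that $\pithin(M)$ is a diffeological groupoid, so that in particular its multiplication $\fm$ and inversion $\fri$ are smooth.

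For surjectivity of $\varpi$, connectedness of $M$ supplies for each $y \in M$ a path with sitting instances $\delta_y : [0,1] \to M$ with $\delta_y(0) = x$ and $\delta_y(1) = y$ (reparameterize by the cutoff $\beta$ of Remark~\ref{rmrk:5.3333} if necessary); then $[\delta_y] \in \scrP$ and $\varpi([\delta_y]) = y$.  For local sections near $y$, I would invoke Lemma~\ref{lem:loc_sections} to produce a neighborhood $U$ of $y$ and a smooth map $\tau : U \to \calP(M)/_{\sim}$ with $\fs(\tau(y')) = y$ and $\ft(\tau(y')) = y'$ for all $y' \in U$.  The assignment $\sigma(y') := \tau(y') \cdot [\delta_y]$ is composable in $\pithin(M)$, lies in $\scrP$ because $\fs(\sigma(y')) = \fs([\delta_y]) = x$, and satisfies $\varpi \circ \sigma = \id_U$; smoothness of $\sigma$ is smoothness of $\fm$.

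The right action $[\gamma] \cdot [\alpha] := [\gamma][\alpha]$ of $\calG$ on $\scrP$ is well-defined since $\fs([\gamma]) = x = \ft([\alpha])$ makes the pair composable, the result lies in $\scrP$ because $\fs([\gamma][\alpha]) = \fs([\alpha]) = x$, and $\varpi$ is manifestly invariant.  The action axioms are a special case of associativity and the unit axiom for $\pithin(M)$, and smoothness of the action is smoothness of $\fm$ restricted to the appropriate fiber-product diffeology.

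Finally, for the principal bundle map $\psi(z,g) := (z, z \cdot g)$, given $(z_1, z_2) \in \scrP \times_{\varpi, M, \varpi} \scrP$ (so $\ft(z_1) = \ft(z_2)$) the element $g := \fri(z_1) \cdot z_2$ is composable in $\pithin(M)$ and lies in $\calG$ since both its source and its target equal $x$; cancellation in the groupoid gives $z_1 \cdot g = z_2$, while existence of inverses gives uniqueness.  Thus $(z_1, z_2) \mapsto (z_1, \fri(z_1) \cdot z_2)$ is a two-sided set-theoretic inverse to $\psi$, and smoothness of both $\psi$ and $\psi\inv$ follows from smoothness of $\fm$ and $\fri$.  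The only real subtlety in the argument is producing the local sections of $\varpi$, which is where the target-map lemma (Lemma~\ref{lem:loc_sections}) from Appendix~\ref{app:1} does the essential work; everything else is a straight transcription of groupoid axioms and the diffeological structure of $\pithin(M)$.
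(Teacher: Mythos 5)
Your proof is correct and takes essentially the same approach as the paper: the principal bundle isomorphism $\psi$ and its inverse arise from the smooth multiplication and inversion of $\pithin(M)$, and local sections of $\varpi$ come from Lemma~\ref{lem:loc_sections}. You are more explicit than the paper's terse argument --- spelling out surjectivity via connectedness, the action axioms, and the composition $\sigma(y')=\tau(y')[\delta_y]$ that lands the section inside $\fs^{-1}(x)$ --- though note (an imprecision shared with the paper's own one-line invocation) that the property $\fs\circ\tau\equiv y$ you rely on is supplied by the construction in the \emph{proof} of Lemma~\ref{lem:loc_sections}, not by its statement.
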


\begin{proof}
  Since the multiplication $\fm$ in the fundamental groupoid
  $\pithin(M)$ is smooth, the map
\[
\psi: \fs\inv(x) \times \fund(M,x) \to \fs\inv (x)\times _{\varpi, M, \varpi} \fs\inv (x), \qquad \psi([\tau], [\gamma]):= ([\tau],[\tau] [\gamma])
\]
is smooth.  If $[\tau_1], [\tau_2] \in \fs\inv (x)$ have the same
target, then $[\tau_1]\inv [\tau_2]$ is a loop in $\fund(M,x)$ and
$[\tau_2] = [\tau_1]([\tau_1]\inv [\tau_2])$.   Hence the map 
\[
([\tau_1], [\tau_2])\mapsto ([\tau_1], [\tau_1]\inv [\tau_2])
\]
is a smooth inverse of $\psi$.

By Lemma~\ref{lem:loc_sections}, the map $\varpi =
\ft|_{\fs\inv (x)}:\fs\inv (x) \to M$ has local sections.
\end{proof}

\begin{lemma}
Let $\calG \to \scrP \xrightarrow{\varpi} M$ be a diffeological
principal bundle.  Suppose the diffeological group $\calG$ acts
smoothly on the left on a manifold $T$.   Then the associated bundle 
\[
\scrP\times ^\calG T: = (\scrP\times T)/\calG
\]
is a manifold; the left action of $\calG $ on $\scrP\times T$ is
given by $\gamma \cdot (z, t) = (z\cdot \gamma\inv, \gamma \cdot t)$.
Moreover, $\pi:\scrP\times ^\calG T \to M$, given by $\pi ([z,t]) =
\varpi (z)$, makes $\scrP\times ^\calG T$ into a fiber bundle over $M$
with typical fiber $T$.  Additionally, if $T$ is a $G$-torsor for a Lie group $G$ and $\calG$
acts on $T$ by torsor automorphisms, then $\pi: \scrP\times ^\calG
T\to M$ is a principal $G$-bundle.
\end{lemma}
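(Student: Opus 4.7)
The plan is to build a manifold structure on $\scrP\times^\calG T$ from local trivializations coming from local sections of $\varpi:\scrP\to M$. Pick an open cover $\{U_\alpha\}$ of $M$ together with smooth sections $\sigma_\alpha:U_\alpha\to \scrP$ of $\varpi$ (these exist by Definition~\ref{principalbund}(1)). For each $\alpha$ I would define
\[
\phi_\alpha:U_\alpha\times T\to \pi\inv(U_\alpha),\qquad \phi_\alpha(x,t):=[\sigma_\alpha(x),t].
\]
Injectivity follows from the freeness of the $\calG$-action on $\scrP$ (which is encoded in the isomorphism $\psi$ of Definition~\ref{principalbund}(3)): if $[\sigma_\alpha(x),t]=[\sigma_\alpha(x'),t']$ then $\varpi(\sigma_\alpha(x))=\varpi(\sigma_\alpha(x'))$ forces $x=x'$, and the unique $\gamma\in\calG$ with $\sigma_\alpha(x)\cdot\gamma\inv = \sigma_\alpha(x)$ must be the identity, so $t=t'$. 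Surjectivity onto $\pi\inv(U_\alpha)$ follows from the same principal-bundle property applied to any representative.

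Next I would compute transition functions. On $U_\alpha\cap U_\beta$, the principal-bundle property provides a unique smooth cocycle $g_{\alpha\beta}:U_\alpha\cap U_\beta\to \calG$ with $\sigma_\beta(x)=\sigma_\alpha(x)\cdot g_{\alpha\beta}(x)$; smoothness follows from applying $\psi\inv$ (a morphism of diffeological spaces) to the smooth plot $x\mapsto(\sigma_\alpha(x),\sigma_\beta(x))$. A direct computation using the definition of the $\calG$-action on $\scrP\times T$ gives
\[
\phi_\alpha\inv\circ\phi_\beta(x,t)=(x,\,g_{\alpha\beta}(x)\cdot t).
\]
Because the left action $\calG\times T\to T$ is a morphism of diffeological spaces and $T$ is a manifold, and because $(x,t)\mapsto(g_{\alpha\beta}(x),t)$ is a smooth map from the manifold $(U_\alpha\cap U_\beta)\times T$ into the diffeological product $\calG\times T$, the composition $(x,t)\mapsto g_{\alpha\beta}(x)\cdot t$ is smooth as a map of manifolds. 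Hence the $\phi_\alpha$ form a smooth atlas on $\scrP\times^\calG T$ and $\pi:\scrP\times^\calG T\to M$ is a smooth fiber bundle with typical fiber $T$. To close the loop, I would verify that this manifold structure coincides with the quotient diffeology: each $\phi_\alpha$ is a plot by construction, and any plot $U\to\scrP\times T$ locally lifts through a section so that composition with the quotient factors smoothly through some $\phi_\alpha$.

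Finally, for the torsor case, the right $G$-action on $T$ induces a right $G$-action on $\scrP\times T$ by $(z,t)\cdot g := (z,t\cdot g)$. The assumption that $\calG$ acts by torsor automorphisms says exactly that the $\calG$- and $G$-actions on $T$ commute, so the $G$-action descends to $\scrP\times^\calG T$. In each trivialization $\phi_\alpha$ this $G$-action is identified with the translation action of $G$ on the second factor of $U_\alpha\times T$, so it is smooth, free, and fiberwise transitive, exhibiting $\pi:\scrP\times^\calG T\to M$ as a principal $G$-bundle. The main obstacle I foresee is the bookkeeping that identifies the quotient diffeology with the manifold structure produced by the atlas $\{\phi_\alpha\}$; this ultimately rests on the fact that the smooth cocycle $g_{\alpha\beta}$ and the smooth $\calG$-action on the manifold $T$ combine to give transition maps that are smooth in the strict (not merely diffeological) sense.
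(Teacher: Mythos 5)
Your proposal is correct, but it takes a genuinely different route from the paper, and the difference is worth noting because it concerns exactly the step you flag as ``the main obstacle.'' You build a classical atlas: charts $\phi_\alpha$, transition functions $g_{\alpha\beta}$, smoothness of $(x,t)\mapsto g_{\alpha\beta}(x)\cdot t$ from the smooth diffeological action of $\calG$ on $T$; then you must separately reconcile the manifold structure defined by this atlas with the quotient diffeology on $\scrP\times^\calG T$. The paper never leaves the diffeological category: it uses the smooth division map $d:\scrP\times_M\scrP\to\calG$ (obtained by inverting $\psi$, the same device that produces your $g_{\alpha\beta}$) to exhibit an explicit smooth inverse $\sigma_\alpha\inv([z,t]) = (\varpi(z),\, d(s_\alpha(\varpi(z)),z)\cdot t)$, so that each local trivialization is an \emph{isomorphism of diffeological spaces} $U_\alpha\times T \to (\scrP\times^\calG T)|_{U_\alpha}$. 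It then invokes the fact (\cite[Section~4.2, p.~78]{IZ}) that a diffeological space admitting an open cover by manifolds is a manifold. This makes the ``compatibility with the quotient diffeology'' step disappear: the manifold structure \emph{is} the quotient diffeology from the start, rather than something constructed separately and matched afterward. If you pursue your version, note two things you currently only sketch: (i) you should check the cocycle compatibility $\phi_\gamma\inv\phi_\beta\circ\phi_\beta\inv\phi_\alpha = \phi_\gamma\inv\phi_\alpha$ (this follows from the cocycle identity $g_{\alpha\gamma}=g_{\alpha\beta}g_{\beta\gamma}$, which needs the associativity of the division map $d$), and (ii) the reconciliation with the quotient diffeology does need the smooth formula for $\phi_\alpha\inv$ — which is exactly the map $d$ — rather than just the observation that plots lift; writing that formula down essentially reproduces the paper's computation. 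Both approaches ultimately rest on the same two ingredients (smooth local sections, smooth $d$), but the paper's bookkeeping is lighter.
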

\begin{proof}
 By Definition~\ref{principalbund} the map
  $\psi: \scrP\times \calG\to \scrP\times_{\varpi, M,\varpi}\scrP$
  given by $\psi (z, g) := (z, z\cdot g)$ is an isomorphism of
  diffeological spaces.  By composing its inverse $\psi\inv$ with the
  projection on the second factor, we obtain a smooth map 
\[
d: \scrP \times_M \scrP \to \calG
\]
characterized by 
\[
z_1 \cdot d(z_1,z_2) = z_2
\]
for all points $(z_1, z_2)$ in the fiber product $\scrP \times_M \scrP $.
Consequently, if $s_\alpha:U_\alpha \to \scrP$ and $s_\beta:U_\beta \to
\scrP$ are two local sections, then 
\[
s_\alpha(x) \cdot d (s_\alpha(x), s_\beta (x)) = s_\beta (x)
\]
for all $x\in U_\alpha \cap U_\beta$.
A local section $s_\alpha:U_\alpha \to \scrP$ defines a smooth map 
\[
\sigma_\alpha: U_\alpha \times T \to (\scrP\times
^\calG T)|_{U_\alpha}, 
\qquad  \sigma_\alpha (x,t) := [s_\alpha(x), t]. 
\]
By construction, the diagram
\begin{equation} \label{eq:6.111}
\xy
(-15,6)*+{U_\alpha \times T}="1";
(15,6)*+{(\scrP\times^\calG T)|_{U_\alpha}}="2";
(0,-10)*+{U_\alpha}="3";
 {\ar@{->}^{\sigma_\alpha\ \ \ \ } "1";"2"};
 {\ar@{->}_{pr_1} "1";"3"}; 
{\ar@{->}^{\pi} "2";"3"};
\endxy
\end{equation}
commutes, where $pr_1$ is the projection on the first factor and
$\pi([z,t]) := \varpi(z)$.   

 The map $\sigma_\alpha$ has a smooth
inverse: given $x\in U_\alpha$ and $z\in \scrP_x := \varpi\inv (x)$, we have 
\[
s_\alpha (x) \cdot d(s_\alpha (x), z) = z.
\]
Hence, for all $t\in T$, 
\[
[z,t] = [s_\alpha (x) \cdot d(s_\alpha (x), z) , t] = [s_\alpha (x),
d(s_\alpha (x), z) \cdot  t].
\]
Consequently 
\[
\sigma_\alpha \inv ([z,t]) = (\varpi(z), d(s_\alpha (\varpi(z))\cdot t)
\]
is smooth.  We conclude that $\sigma_\alpha :U_\alpha \times T
\to (\scrP \times^\calG T)|_{U_\alpha}$ is an isomorphism of
diffeological spaces.   

Now choose an open cover $\{U_\alpha\}$ of the manifold $M$ so that
for each index $\alpha$, the restriction $\scrP|_{U_\alpha } \to
U_\alpha$ has a section $s_\alpha$.  Then the images of the
corresponding trivializations $\sigma_\alpha:U_\alpha \times T\to
\scrP\times^\calG T$ cover $\scrP\times ^\calG T$.  Any diffeological
space that has an open cover consisting of manifolds is itself a
manifold \cite[Section~4.2, p.~78]{IZ}. Therefore the diffeological
space $\scrP\times ^\calG T$ is a manifold.  Moreover, since diagram
\eqref{eq:6.111} commutes, $\pi:\scrP\times ^\calG T\to M$ is a
locally trivial fiber bundle with typical fiber $T$.

Additionally, if $T$ is a $G$-torsor for a Lie group $G$ and $\calG$
acts on $T$ by torsor automorphisms, then $\scrP\times ^\calG T$
admits a right $G$ action.  By construction, the local trivialization
maps $\sigma_\alpha: U_\alpha \times T \to \scrP\times ^\calG T$ are
$G$-equivariant.  It follows that $\pi:\scrP\times ^\calG T \to M$ is
a principal $G$-bundle.
\end{proof}

\begin{proof}[Proof of Lemma~\ref{lemmaB}]
Let $\alpha:F\Rightarrow F'$ % :\pithin(M)\to \gtors$
be a natural isomorphism between  two transport functors.  Fix a point $x\in M$. Then $\alpha_x:F(x)\to F'(x)$ is a map of $G$-torsors.  Hence, it defines a smooth, $(\fund(M,x)\times G)$-equivariant map: 
\[
(id, \alpha_x):\fs\inv (x) \times F(x) \longrightarrow \fs\inv (x) \times F'(x).
\]

This map descends to a smooth $G$-equivariant map on the quotient
\[[id,\alpha_x]: (\fs\inv (x) \times F(x))/\fund(M,x) \longrightarrow
(\fs\inv (x) \times F'(x))/\fund(M,x).\] There is also a natural
action of $\pithin(M)$ on $\fs\inv(x)$ that descends to an action on
the quotient.  Since $[id, \alpha_x]$ is $\pithin(M)$-equivariant the
procedure defines a functor
\[
\assoc: \trans(M) \longrightarrow B^p G(M).
\]
For any principal $G$-bundle $P\to M$ with an action $a$ of $\pithin
(M)$, the principal $G$-bundle $\assoc (\rep(a))$ is naturally
isomorphic to the bundle $P$.  The isomorphism 
\[
\eta_P:(\fs\inv (x)\times P_x)/\fund(M,x) \longrightarrow P
\]
is defined by 
\[
  \eta_P(([[\gamma], z])):= [\gamma]\cdot z.
\]
Conversely, any transport functor $F:\pithin(M)\to \gtors$ is isomorphic
to the transport functor $\rep(\assoc(F))$.  To see this, note first
that $\rep(\assoc(F))$ is defined on objects by sending $y\in M$ to
the fiber of the bundle $\assoc(F)$ above $y$.  This fiber is the
torsor 
\[\left\{[[\gamma], z] \in \fs\inv (x)\times F(x)/\calG \mid \gamma
(1) = y\right\}.\] 
The natural isomorphism $\varepsilon:
\rep(\assoc(F))\Rightarrow F $ is given by
\[
\varepsilon_y: [[\gamma], z]\mapsto F([\gamma])z.
\]
It is well-defined.  It follows that the functor $\rep: B^p G(M)\to
\trans(M)$ is an equivalence of categories.
\end{proof}

%\newpage

\part{Parallel transport and stacks}

From now on we assume that the reader is familiar with stacks over the
site of differentiable manifolds.  The standard references are Behrend
and Xu \cite{BX}, Heinloth \cite{H} and Metzler \cite{Metzler}.  We
will primarily think of stack $\mathcal{X}$ over the site $\Man$ of
manifolds as a category fibered in groupoids (CFG) that satisfies
descent.  One can also think of stacks over $\Man$ as lax presheaves
of groupoids with descent.  Grothendieck construction (see for example
\cite{V}) converts lax presheaves into CFGs.  A choice of cleavage
turns a CFG into a lax presheaf.  Finally recall that any Lie groupoid
$\Gamma =\{\Gamma_1\toto\Gamma_0\}$ has a stack quotient
$[\Gamma_0/\Gamma_1]$: it is a category fibered in groupoids over
$\Man$ whose objects are principal $\Gamma$-bundles (see \cite{L} for
example). It is well known that stack quotients, as the name implies,
are stacks.

\section{Holonomy functor as an isomorphism of stacks }
We start by constructing a presheaf of groupoids out of the assignment
of transport functors to manifolds.
\begin{lemma}\label{lem:5.1}
  The assignment
\[
M\mapsto \trans(M)
\]
extends to a contravariant functor, that is, a strict presheaf of
groupoids
\[
\trans: \Man^{op} \to \gpd
\]
from the category of manifolds to the category of groupoids.
\end{lemma}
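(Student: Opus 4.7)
The plan is to construct the contravariant functor by precomposition with the induced maps on thin fundamental groupoids. Concretely, for a smooth map $f : M \to N$, I will use the functor $\pithin : \Man \to \Difgpd$ from Appendix~\ref{app:1} to get a morphism of diffeological groupoids $\pithin(f) : \pithin(M) \to \pithin(N)$. Then I define
\[
\trans(f) : \trans(N) \longrightarrow \trans(M)
\]
on objects by $\trans(f)(F) := F \circ \pithin(f)$ and on morphisms (natural transformations) $\alpha : F \Rightarrow F'$ by the whiskering $\trans(f)(\alpha) := \alpha \pithin(f)$, i.e.\ $\trans(f)(\alpha)_x := \alpha_{f(x)}$ for each $x \in M$.

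The substantive step is to verify that $F \circ \pithin(f)$ is indeed a transport functor in the sense of Definition~\ref{def:transp-funct}, i.e.\ that for each $x\in M$ the restriction
\[
(F\circ \pithin(f))|_{\fund(M,x)} : \fund(M,x) \longrightarrow \Aut(F(f(x)))
\]
is a map of diffeological spaces. I will argue that this map factors as the composition
\[
\fund(M,x) \xrightarrow{\;\pithin(f)|_{\fund(M,x)}\;} \fund(N,f(x)) \xrightarrow{\;F|_{\fund(N,f(x))}\;} \Aut(F(f(x))).
\]
The first arrow is smooth because $\pithin : \Man \to \Difgpd$ lands in diffeological groupoids (so the restriction of $\pithin(f)_1$ to the isotropy group at $x$ is a smooth group homomorphism), and the second is smooth by the assumption that $F$ is a transport functor. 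The composition is therefore smooth, as required. That $\trans(f)(\alpha)$ is a natural transformation from $F \circ \pithin(f)$ to $F' \circ \pithin(f)$ follows immediately from naturality of $\alpha$ applied to the arrows $\pithin(f)([\gamma])$.

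Functoriality of $\trans$ is then automatic: since $\pithin$ is a strict functor, we have $\pithin(\id_M) = \id_{\pithin(M)}$ and $\pithin(g\circ f) = \pithin(g)\circ \pithin(f)$, so precomposition and whiskering give $\trans(\id_M) = \id_{\trans(M)}$ and $\trans(g\circ f) = \trans(f)\circ \trans(g)$ on both objects and arrows. The main obstacle is really just the smoothness check in the middle step, and this reduces to unpacking the diffeological-groupoid structure on $\pithin(M)$ established in Appendix~\ref{app:1}; no further analytic work beyond quoting Proposition~\ref{thm:gpoidpf} and the functoriality of $\pithin$ is needed.
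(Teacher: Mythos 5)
Your proposal is correct and follows essentially the same route as the paper's proof: define $\trans(f)$ by precomposition with $\pithin(f)$, verify smoothness of the restriction to isotropy groups via the factorization through $\fund(N,f(x))$, and deduce functoriality from the strict functoriality of $\pithin$ established in Proposition~\ref{prop:3.27}. The only cosmetic difference is that you spell out the action on morphisms (whiskering), which the paper leaves implicit.
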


\begin{proof}
We need to check that for any smooth map $f:N\to M$ between manifolds,
we have a map $f^*: \trans(M) \to \trans(N)$ such that, for a pair of
composable maps $Q\xrightarrow{h}N\xrightarrow{f}M$, we have $(f\circ
h)^* = h^*\circ f^*$ and that $id_M^* = id_{\trans{M}}$.  

Since $\pithin$ is a functor from the category of manifolds to the
category of diffeological groupoids, for any smooth map $f:N\to M$
between manifolds, we have a morphism $\pithin(f):\pithin(N)\to
\pithin(M)$ between diffeological groupoids (see
Proposition~\ref{prop:3.27}).  In particular $\pithin(f):
\calP(N)/_{\sim}\to \calP(M)/_{\sim}$ is a map of diffeological spaces
and so is the restriction $\pithin(f))|_{\fund(N,x)}$ for any point
$x\in N$.  If $T:\pithin(M)\to \gtors$ is a transport functor then, by
definition, for any $x\in N$ the map
$T|_{\fund(M,f(x))}:{\fund(M,f(x))} \to \Aut(T(f(x)))$ is smooth.
Consequently the map
\[ (T\circ \pithin(f))|_{\fund(N,x)}= T|_{\fund(M,f(x))} \circ
(\pithin(f)|_{\fund(N,x)})
\] is smooth as well.  Thus,
\[ 
f^*T := T\circ \pithin(f)
\] 
is a transport functor.  Since $\pithin$ is a functor,
\[ (f\circ h)^*T = T \circ \pithin(fh) = T\circ \pithin(f) \circ
\pithin(h) =h^*(f^*T).
\] 
Since $\pithin(id_M) = id_{\pithin(M)}$, $(id_M)^* = id_{\trans{(M)}}$
for all manifolds $M$.
\end{proof}

Grothendieck construction \cite{V} applied to the presheaf of groupoids
  $\trans$ produces a category $\transu$ which is fibered in groupoids over the
  category of manifolds $\Man$.  Explicitly we define $\transu$ as follows.

  \begin{definition}[the category $\transu$ of transport functors over
    the category $\Man$ of manifolds]\label{def:transu}
  The objects of $\transu$ are pairs $(M, F)$ where $M$ is
  a manifold and $F\in \trans(M)$ is a transport functor.  A morphism
  of $\transu$ from $(N,H)$ to $(M,F)$ is a pair $(f,\alpha)$ where
  $f:N\to M$ is a smooth map of manifolds and $\alpha:F\circ
  \pithin(f) \Rightarrow H$ is a natural isomorphism.

The functor $\varpi_T:\transu\to \Man$ is given on arrows by
\[
\varpi_T ((N,H)\xrightarrow{(f,\alpha)} (M,F)) = (N\xrightarrow{f}M).
\]
\end{definition}
We would like to extend Theorem~\ref{thm:sub_main} to a statement about maps of stacks. As a first step we prove
\begin{lemma} \label{lem:5.166}
The collection of functors
\[
\{ \hol_M: B^\nabla G(M)\to \trans(M)\}_{M\in \Man}
\]
extends to a functor
\[
\hol: B^\nabla G \to \transu ,
\]
which is a  morphism of categories fibered in groupoids over $\Man$.
\end{lemma}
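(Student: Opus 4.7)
The plan is to pair the fiber-wise holonomy functor with the natural isomorphism extracted from a bundle map. On objects, set $\hol(M,(P,A)) := (M,\hol_M(P,A))$. A morphism in $B^\nabla G$ from $(N,(Q,B))$ to $(M,(P,A))$ is a pair $(f,\tilde f)$ consisting of a smooth map $f:N\to M$ and a $G$-equivariant bundle map $\tilde f:Q\to P$ covering $f$ with $\tilde f^{\ast} A = B$. To such data, assign the morphism $(f,\alpha^{\tilde f}):(N,\hol_N(Q,B))\to (M,\hol_M(P,A))$ in $\transu$, where
\[
\alpha^{\tilde f}:\hol_M(P,A)\circ\pithin(f)\Rightarrow \hol_N(Q,B)
\]
has components $\alpha^{\tilde f}_x := (\tilde f|_{Q_x})^{-1}: P_{f(x)} \to Q_x$. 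Each $\tilde f|_{Q_x}$ is an isomorphism of $G$-torsors, so $\alpha^{\tilde f}_x$ is well-defined and $G$-equivariant.

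The naturality of $\alpha^{\tilde f}$ is exactly the content of \eqref{eq:5.1} in Lemma~\ref{prop:fund_fact}: for any arrow $[\gamma]:x\to y$ in $\pithin(N)$ we have $(\tilde f|_{Q_y})\circ ||^B_\gamma = ||^A_{f\circ\gamma}\circ(\tilde f|_{Q_x})$, and inverting both sides yields the commuting square
\[
\xy
(-14,6)*+{P_{f(x)}}="1";
(14,6)*+{Q_x}="2";
(-14,-6)*+{P_{f(y)}}="3";
(14,-6)*+{Q_y}="4";
{\ar@{->}^{\alpha^{\tilde f}_x} "1";"2"};
{\ar@{->}^{||^B_\gamma} "2";"4"};
{\ar@{->}_{||^A_{f\circ\gamma}} "1";"3"};
{\ar@{->}_{\alpha^{\tilde f}_y} "3";"4"};
\endxy
\]
Hence $(f,\alpha^{\tilde f})$ is a bona fide morphism in $\transu$.

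Functoriality is a fiberwise verification. Identities clearly map to identity natural transformations. For a composable pair $(N,(Q,B))\xrightarrow{(f,\tilde f)}(M,(P,A))\xrightarrow{(g,\tilde g)}(L,(R,C))$, at $x\in N$ one computes
\[
\alpha^{\tilde g\circ\tilde f}_x = ((\tilde g\circ\tilde f)|_{Q_x})^{-1} = (\tilde f|_{Q_x})^{-1}\circ(\tilde g|_{P_{f(x)}})^{-1} = \alpha^{\tilde f}_x\circ \alpha^{\tilde g}_{f(x)},
\]
which matches the composition rule $\alpha^{\tilde f}\circ(\alpha^{\tilde g}\cdot \pithin(f))$ prescribed in $\transu$ (using the functoriality $\pithin(g\circ f)=\pithin(g)\circ\pithin(f)$ from Appendix~\ref{app:1}). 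By construction $\varpi_T\circ \hol=\varpi_{B^\nabla G}$, so $\hol$ commutes with the projections to $\Man$; and since both source and target are fibered in groupoids, every morphism is automatically Cartesian, so $\hol$ is a morphism of CFGs with no further verification required. I do not anticipate any substantive obstacle: the lemma reduces to bookkeeping once the pull-back-of-parallel-transport identity \eqref{eq:5.1} is in hand.
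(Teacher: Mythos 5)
Your proposal is correct and relies on the same key input as the paper's proof, namely the pull-back identity for parallel transport \eqref{eq:5.1} from Lemma~\ref{prop:fund_fact}. The one place where you and the paper diverge is the direction of the natural isomorphism attached to a bundle map: you produce
$\alpha^{\tilde f}:\hol_M(P,A)\circ\pithin(f)\Rightarrow\hol_N(Q,B)$
with components $(\tilde f|_{Q_x})^{-1}$, which is the direction prescribed verbatim in Definition~\ref{def:transu}, whereas the paper's proof writes $\eta:\hol(P,A)\Rightarrow\hol(P',A')\circ\pithin(\bar f)$ with components $f|_{P_x}$ — i.e.\ the inverse convention. Since $\trans(M)$ is a groupoid and every natural transformation between transport functors is invertible, the two choices encode the same data, but your version is the one literally compatible with the Grothendieck-construction description of $\transu$. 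Your proposal also makes explicit two points the paper glosses over with ``it is not hard to check'': the fiberwise computation showing $\alpha^{\tilde g\circ\tilde f}_x=\alpha^{\tilde f}_x\circ\alpha^{\tilde g}_{f(x)}$ (matching the composition rule in the Grothendieck construction via $\pithin(g\circ f)=\pithin(g)\circ\pithin(f)$ from Proposition~\ref{prop:3.27}), and the observation that cartesianness is automatic because both $B^\nabla G$ and $\transu$ are fibered in groupoids. No gap.
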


\begin{proof}
Let $f:(P\to M, A) \to (P'\to M', A')$ be a map of principal
$G$-bundles with connections.  The connections $A$ and $A'$ define transport functors
$\hol(P,A) :\pithin(M)\to \gtors$ and $\hol(P',A') :\pithin(M')\to
\gtors$, respectively.  We need to define a morphism
\[
\hol(f): \hol(P,A) \to \hol(P',A') 
\]
in $\transu$. Such a morphism is a pair of the form $(\bar{f}, \eta)$
where $\eta: \hol(P,A) \Rightarrow \hol (P', A') \circ \pithin(\bar
f)$ is a natural transformation (see Definition~\ref{def:transu}
above) and $\bar{f}:M\to M'$ the induced map on the base.

Recall that the objects of the groupoid $\pithin(M)$ are points of
$M$.  For each point $x\in M$, define
\[
\eta_x := f|_{P_x}: P_x \to P'_{\bar{f}(x)}.
\]
We check that the collection $\{\eta_x\}_{x\in M}$ is a
natural transformation. 
Since \eqref{eq:5.1} holds, the diagram 
\begin{equation}\label{eq:5.3}
\xy
(-22,10)*+{P_x}="1";
(22,10)*+{P_y}="2";
(-22,-6)*+{P'_{\bar{f}(x)}}="3";
(22,-6)*+{P'_{\bar{f}(y)}}="4";
 {\ar@{->}^{||_{\gamma}} "1";"2"};
 {\ar@{->}^{f|_{P_y}} "2";"4"};
 {\ar@{->}_{||_{\bar{f}\circ\gamma}} "3";"4"};
{\ar@{->}_{f|_{P_x}} "1";"3"};
\endxy
\end{equation}
commutes.
By definition, $\hol(P,A) ([\gamma]) = ||_\gamma$, the parallel
transport along $\gamma$ in $P$ defined by the connection $A$. On the
other hand $\pithin(\bar{f}) ([\gamma])=[\bar{f}\circ \gamma]$ and
$\hol(P',A') ([\bar{f}\circ \gamma]) = ||_{\bar{f}\circ
  \gamma}$.  Therefore 
the diagram
\begin{equation}\label{eq:5.2}
\xy
(-22,10)*+{P_x}="1";
(22,10)*+{P_y}="2";
(-22,-6)*+{P'_{\bar{f}(x)}}="3";
(22,-6)*+{P'_{\bar{f}(y)}}="4";
 {\ar@{->}^{\hol(P,A)([\gamma])} "1";"2"};
 {\ar@{->}^{\eta_y} "2";"4"};
 {\ar@{->}_{\hol(P',A') \circ (\pithin(\bar{f}))([\gamma])} "3";"4"};
{\ar@{->}_{\eta_x} "1";"3"};
\endxy
\end{equation}
commutes for every arrow $x\xrightarrow{[\gamma]}y$ in
$\pithin(M)$.   Therefore $\eta$ {\em is} a natural
transformation.  Thus $\hol(f)$ is a morphism in
$\transu$.

It is not hard to check that $\hol$ % as defined above
is actually a
functor. Finally the functor $\hol$ commutes with the projections
$\varpi_B:B^\nabla G\to \Man$, $\varpi_T: \trans \to \Man$ to the
category of manifolds since 
%Finally, , 
\[
\varpi _B (f:(P\to M, A) \to (P'\to M', A')) =
\bar{f} 
\]
and
\[
\varpi_T ( \hol (f:(P\to M, A) \to (P'\to M', A'))) =
\varpi (\bar{f}, \eta) = \bar{f}.
\] 
\end{proof}
We are now in position to state and prove the main result of the
paper. The proof is short since most of the work has already been
done.
\begin{theorem}\label{thm:main} The functor 
\[
\hol: B^\nabla G \to \transu
\]
is an equivalence of categories fibered in groupoids over $\Man$.
\end{theorem}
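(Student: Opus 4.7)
The plan is to reduce the statement to \textbf{Theorem~\ref{thm:sub_main}} via the general principle that a morphism of categories fibered in groupoids over a site is an equivalence of CFGs if and only if it induces an equivalence of categories on every fiber. Since most of the work has been carried out in the first half of the paper, the proof should be essentially formal.

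First I would identify the fibers of the two CFGs $B^\nabla G \to \Man$ and $\transu \to \Man$. By \textbf{Definition~\ref{def:transu}} and the Grothendieck construction, the fiber of $\transu$ over a manifold $M$ consists of objects $(M,F)$ with $F \in \trans(M)$ and morphisms of the form $(\id_M, \alpha)$ with $\alpha : F \circ \pithin(\id_M) \Rightarrow F'$. Since $\pithin(\id_M) = \id_{\pithin(M)}$, this fiber is canonically isomorphic to $\trans(M)$; analogously the fiber of $B^\nabla G$ over $M$ is $B^\nabla G(M)$. Under these identifications the morphism $\hol$ constructed in \textbf{Lemma~\ref{lem:5.166}} restricts over each $M$ to the functor $\hol_M$ of \textbf{Proposition~\ref{prop:3.17}}.

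Next I would invoke the standard fact (see for example \cite{V}) that a morphism $F : \calX \to \calY$ of CFGs over $\Man$ is an equivalence of CFGs precisely when, for every manifold $M$, the induced functor on fibers $F_M : \calX(M) \to \calY(M)$ is an equivalence of categories. Applying this to $\hol$ and invoking \textbf{Theorem~\ref{thm:sub_main}}, which asserts that each $\hol_M$ is an equivalence of categories, yields the desired result.

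The only real content in the argument is \textbf{Theorem~\ref{thm:sub_main}}; the remainder is bookkeeping about the Grothendieck construction. The main (mild) obstacle will be verifying carefully that the fibers of $\transu$ are indeed $\trans(M)$ and that $\hol$ restricts to $\hol_M$ in the precise sense required, so that ``equivalence on each fiber'' unambiguously promotes to ``equivalence of CFGs.'' This requires tracking the cleavage implicit in the Grothendieck construction applied to the presheaf of \textbf{Lemma~\ref{lem:5.1}}, but is routine.
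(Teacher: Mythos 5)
Your proposal is essentially identical to the paper's proof: both invoke the standard fact (citing \cite{V}) that a morphism of CFGs over $\Man$ is an equivalence if and only if it restricts to an equivalence on each fiber, and then apply Theorem~\ref{thm:sub_main}. The bookkeeping you flag about the Grothendieck construction is correct and is exactly what the paper tacitly assumes, so there is nothing missing here.
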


\begin{proof}
  Recall that a functor between two categories fibered in groupoids is
  an equivalence of categories if and only if its restriction to each
  fiber is an equivalence of categories; see for example
  \cite[Proposition~3.36]{V}.  By Theorem~\ref{thm:sub_main} for each
  manifold $M$ the functor $\hol_M:B^\nabla G \to \transu(M)=
  \trans(M)$ is an equivalence of categories.
\end{proof}
As an immediate corollary of Theorem~\ref{thm:main}, we obtain:
\begin{corollary} \label{cor:5.5}
   The category $\transu$ of transport functors is a stack over the
  category (site) $\Man$ of manifolds.
 \end{corollary}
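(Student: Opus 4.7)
The plan is to deduce the stack condition for $\transu$ from Theorem~\ref{thm:main} together with the known fact that $B^\nabla G$ is a stack. Since $\hol: B^\nabla G \to \transu$ is an equivalence of categories fibered in groupoids over $\Man$, and since the property of ``being a stack'' (i.e.\ satisfying descent) is invariant under equivalence of CFGs, the stack condition transfers from $B^\nabla G$ to $\transu$.

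Concretely, I would first invoke (or briefly recall) the folklore fact that $B^\nabla G$ is a stack over $\Man$: for any open cover $\{U_\alpha \to M\}$ of a manifold $M$, a descent datum for $B^\nabla G$ glues uniquely, because principal $G$-bundles glue from local data and connections are given by local $\mathfrak{g}$-valued $1$-forms satisfying the usual gauge-compatibility on overlaps. Next I would recall the general categorical fact that if $F:\calX \to \calY$ is an equivalence of CFGs over a site, then $\calX$ satisfies descent with respect to a cover if and only if $\calY$ does. This is a formal statement about comparing the descent groupoid $\mathrm{Desc}(\{U_\alpha\},\calY)$ with the fiber $\calY(M)$: the equivalence $F$ induces equivalences on both categories in a compatible way, so that the canonical functor $\calY(M) \to \mathrm{Desc}(\{U_\alpha\},\calY)$ is an equivalence precisely when the corresponding functor for $\calX$ is.

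Applying this to the equivalence $\hol$ from Theorem~\ref{thm:main}, one obtains that $\transu$ satisfies descent for every cover in $\Man$. Combined with the already-noted fact that $\transu$ is a CFG over $\Man$ (Lemma~\ref{lem:5.1} together with Grothendieck's construction), this gives that $\transu$ is a stack.

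The only real content in the argument is the descent-transfer lemma for CFGs, which is standard and can either be cited from a reference such as \cite{V} or verified by a direct diagram chase: given a descent datum in $\transu$ over a cover $\{U_\alpha \to M\}$, use $\hol^{-1}$ to transport it to a descent datum in $B^\nabla G$, glue it there, and transport the global object back via $\hol$, using naturality of the equivalence on overlaps to see that the glued object represents the original descent datum up to a unique isomorphism. I do not expect any genuine obstacle here, since the hard analytic work is entirely contained in Theorem~\ref{thm:sub_main} and Theorem~\ref{thm:main}.
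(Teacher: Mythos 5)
Your proposal is essentially identical to the paper's (one-line) proof: both deduce that $\transu$ is a stack from the equivalence $\hol:B^\nabla G\to\transu$ of Theorem~\ref{thm:main} together with the fact that $B^\nabla G$ is a stack and that the stack/descent property transfers across equivalences of CFGs. Your version just spells out the descent-transfer lemma that the paper takes for granted.
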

\begin{proof}
  Since the CFG $\transu\to \Man$ is equivalent to the CFG $ B^\nabla
  G \to \Man$ and since $B^\nabla G$ is a stack, $\transu$ is a stack.
\end{proof}

\section{Principal bundles over stacks and parallel transport}
\label{sec:6}

In this section we work out some consequences of Theorem~\ref{thm:main}
for principal bundles with connections over stacks.  We start with a
definition of a principal $G$ bundle over a stack (where as before $G$
is a Lie group) which is known to experts \cite{ BMW, BN}.  We
then define principal bundles with connections over stacks and the
associated parallel transport.  We prove that for each stack $\calX$
the functor $\hol$ induces an equivalence between the category of
principal $G$-bundles with connections over $\calX$ and the category
of corresponding transport functors.
\begin{definition}
  Let $G$ be a Lie group.  A {\sf principal $G$-bundle over a stack
    $\calX \to \Man$} is a a 1-morphism of stacks $p:\calX \to BG$,
  where $BG$ denotes the stack of principal $G$ bundles.  
\end{definition}
There several reasons why this definition makes sense.

\begin{itemize}
\item If the stack $\calX$ is a manifold $M$ then by 2-Yoneda (see
  \cite{V}, for example) there is an equivalence of categories $[M,
  BG]\xrightarrow{\simeq} BG(M)$.  Under this equivalence a functor
  $p\in [M,BG]$ corresponds to the principal bundle $p(id_M)$ over
  $M$.  Thus functors $p:M\to BG$ ``are'' principal $G$-bundles over
  $M$.
\item Suppose the stack $\calX$ is a stack quotient $[\Gamma_0/\Gamma_1]$
  of a Lie groupoid $\Gamma$. The 
  bicategory $\Bi$ of Lie groupoids, bibundles and bibundle
  isomorphism is 2-equivalent to the 2-category of geometric stacks over $\Man$ (see \cite{L} or \cite{Blo}).   Consequently the functor
  category $[ [\Gamma_0/\Gamma_1], BG]$ is equivalent to the category of
  bibundles from $\Gamma$ to the action groupoid $\{G\toto *\}$:
  \[ [ [\Gamma_0/\Gamma_1], BG] \xrightarrow{\simeq}
\left\{ P:\{\Gamma_1\toto \Gamma_0\} \to \{G\toto *\} \mid 
P \textrm{ is a right $G$ principal bibundle }
\right\}.
\]
Any bibundle $P:\{\Gamma_1\toto \Gamma_0\} \to \{G\toto *\}$ is a
principal $G$ bundle over the Lie groupoid $\Gamma$ (see \cite{GTX}).
\item The functor category $[ [\Gamma_0/\Gamma_1], BG]$ is also
  equivalent to the cocycle category $BG (\Gamma_1\toto \Gamma_0)$
  (see Definition~\ref{def:X-Gamma} and Proposition~\ref{prop:Noohi}
  below).  Objects of the cocycle category $BG (\Gamma_1\toto
  \Gamma_0)$ are again principal $G$-bundles over the groupoid
  $\Gamma$.  In particular, if the groupoid $\Gamma$ is a cover
  groupoid arising from a cover $\{U_\alpha\}$ of a manifold $M$ then
  the objects of the cocycle category are \v{C}ech cocycles with values in
  the Lie group $G$.  Hence they ``are'' principal $G$-bundles over the
  manifold $M$.
\end{itemize}

By analogy with the notion of a principal $G$ bundle over a stack
$\calX$ we define principal bundles with connections and parallel
transport functors over $\calX$ as follows.

\begin{definition} Let $\calX$ be a stack over $\Man$ and $G$ a Lie
  group.  We define the {\sf category of principal $G$ bundles with
    connection over the stack $\calX$} to be the functor category
  $[\calX ,B^\nabla G]$.  

  In particular a {\sf principal bundle with connection over a stack}
  $\calX$ is a 1-morphism of stacks $F:\calX \to B^\nabla G$.
\end{definition}
\begin{definition} Let $\calX$ be a stack over $\Man$ and $G$ a Lie
  group.  We define the {\sf category of parallel transport functors
    over the stack $\calX$} to be the functor category $[\calX
  ,\transu]$.

  In particular a {\sf parallel transport functor on a stack }
  $\calX$ is a 1-morphism of stacks $T:\calX \to \transu$.
\end{definition}
We have the following extension of
Theorem~\ref{thm:sub_main} from manifolds to stacks.
\begin{theorem}\label{thm:6.4}
  For any stack $\calX$ over the category of manifold the functor
  $\hol:B^\nabla G\to \transu$ induces an equivalence of categories
\[
\hol_*: [\calX ,B^\nabla G]\to [\calX ,\transu],
\]
where the functor $\hol_*$ is defined  by 
\[
\hol_*(\alpha: F\Rightarrow H):=  
(\hol\circ \alpha:\hol\circ F \Rightarrow \hol\circ H)
\]
for a morphism $(\alpha: F\Rightarrow H)\in [\calX ,B^\nabla G]$.
\end{theorem}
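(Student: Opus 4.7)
The plan is to deduce this from Theorem~\ref{thm:main} via a general 2-categorical principle: post-composition with an equivalence in any 2-category induces an equivalence of hom-categories. Since the 2-category of CFGs over $\Man$ has the functor categories $[\calY,\calZ]$ as its hom-categories, it suffices to unpack this principle in the present setting.

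First I would invoke Theorem~\ref{thm:main} to obtain a quasi-inverse $K:\transu\to B^\nabla G$ to $\hol$ as a morphism of CFGs over $\Man$, together with 2-isomorphisms $\eta:\id_{B^\nabla G}\Rightarrow K\circ \hol$ and $\varepsilon:\hol\circ K\Rightarrow \id_{\transu}$. Such a quasi-inverse exists for any equivalence of CFGs; see, for instance, \cite{V}. I would then define the functor $K_*:[\calX,\transu]\to [\calX,B^\nabla G]$ by $K_*(T):=K\circ T$ on objects and by left whiskering, $K_*(\beta):=K\ast \beta$, on natural transformations of CFG-morphisms.

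The heart of the argument is to exhibit natural isomorphisms $\id_{[\calX,B^\nabla G]}\simeq K_*\circ \hol_*$ and $\hol_*\circ K_*\simeq \id_{[\calX,\transu]}$. Whiskering $\eta$ on the right by a 1-morphism $F:\calX\to B^\nabla G$, and $\varepsilon$ on the right by $T:\calX\to \transu$, produces 2-isomorphisms
\[
\eta\ast F: F\xRightarrow{\,\simeq\,} K\circ \hol\circ F = K_*\hol_*(F),\qquad
\varepsilon\ast T: \hol_*K_*(T)=\hol\circ K\circ T\xRightarrow{\,\simeq\,} T.
\]
The interchange law for the 2-category of CFGs over $\Man$ then guarantees that the assignments $F\mapsto \eta\ast F$ and $T\mapsto \varepsilon\ast T$ are themselves natural in $F$ and $T$; these are the desired natural isomorphisms, so $\hol_*$ is an equivalence of categories.

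I do not expect a genuine obstacle here: the real content of the theorem lies in Theorem~\ref{thm:main}, and the rest is formal 2-categorical bookkeeping. The only point that requires mild care is fixing the correct 2-category once and for all (CFGs over $\Man$, or equivalently --- since both $B^\nabla G$ and $\transu$ are stacks by Corollary~\ref{cor:5.5} --- stacks over $\Man$) and checking that morphisms in $[\calX,\,\cdot\,]$ are taken to be natural transformations of CFG-morphisms, so that horizontal composition and whiskering are available and behave as expected.
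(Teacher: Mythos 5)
Your proof is correct and makes explicit exactly the 2-categorical principle that the paper's one-line proof (``Since $\hol$ is an equivalence of categories so is $\hol_*$'') tacitly invokes: post-composition with an equivalence of CFGs induces an equivalence of hom-categories, established via a quasi-inverse and whiskered unit and counit. The paper takes the same route; you have simply unpacked the bookkeeping.
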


\begin{proof}
Since $\hol$ is an equivalence of categories so is $\hol_*$.
\end{proof}
We now interpret the results of the above theorem more concretely in
terms of the cocycle category.  Recall that a Lie groupoid $\Gamma =
\{\Gamma_1 \toto \Gamma_2\}$ gives rise to a simplicial manifold.  In
particular we have three face maps
\[
d_i:\Gamma_1 \times _{\Gamma_0}
\Gamma_1 \to \Gamma_1, \qquad i=0,1,2
\]
which are defined by 
\[
 d_0(\gamma_1,\gamma_2) = \gamma_2,\qquad
d_1(\gamma_1,\gamma_2) = \gamma_1 \gamma_2 \qquad \textrm{ and }\qquad
d_2(\gamma_1,\gamma_2) = \gamma_1.
\]
\begin{definition}[The category $\calX(\Gamma)$  of $\Gamma$ cocycles
  ]\label{def:X-Gamma}\mbox{}
  Let $\Gamma = \{\Gamma_1\toto \Gamma_0\}$ be a Lie groupoid and
  $\calX\to \Man$ a category fibered in groupoids.  We define {\sf the
    category $\calX(\Gamma)$ of $\Gamma$ cocycles with values in $\calX$} as
  follows. The objects of $\calX(\Gamma)$ are pairs $(p,\varphi)$
  where $p$ is an object of $\calX(\Gamma_0)$ and $\varphi: s^*p\to
  t^*p$ is an arrow in $\calX(\Gamma_1)$. The morphism $\varphi$ is subject  to the cocycle
  condition
\[
d_2^*\varphi \circ d_0^* \varphi = d_1 ^* \varphi,
\]
where $d_i:\Gamma_1 \times _{\Gamma_0} \Gamma_1 \to \Gamma_1$ are the
face maps defined above.  A morphism from $(p, \varphi)$ to $(p',
\varphi')$ is a morphism $\alpha:p\to p'$ in $\calX(\Gamma_0)$ such
that the diagram
\[
\xy
(-10,10)*+{ t^*p}="1";
(20,10)*+{s^*p}="2";
(-10,-7)*+{t^*p'}="3";
(20,-7)*+{s^*p'}="4";
 {\ar@{->}_{t^*\alpha} "1";"3"};
 {\ar@{->}^{s^*\alpha} "2";"4"};
 {\ar@{->}^{\varphi} "1";"2"};
{\ar@{->}_{\varphi'} "3";"4"};
\endxy
\]
commutes.
\end{definition}
The following fact is well-known to experts.
\begin{proposition}\label{prop:Noohi}
Let $\Gamma$ be a Lie groupoid and $\calX \to \Man$ a category fibered in groupoids.
There is a canonical functor  
\[
\Sigma: [[\Gamma_0/\Gamma_1], \calX] \to \calX (\Gamma)
\]
from the functor category $ [[\Gamma_0/\Gamma_1], \calX]$ to the
cocycle category $\calX (\Gamma)$.

If moreover $\calX$ is
a stack then $\Sigma$ is an equivalence of categories (i.e., an
isomorphism of stacks).
\end{proposition}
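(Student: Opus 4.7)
The plan is to construct $\Sigma$ using the canonical atlas $a: \Gamma_0 \to [\Gamma_0/\Gamma_1]$ together with the canonical 2-isomorphism $\eta: a \circ s \Rightarrow a \circ t$ over $\Gamma_1$ that records the fact that points of $\Gamma_0$ linked by an arrow of $\Gamma_1$ become isomorphic in the quotient. For $F \in [[\Gamma_0/\Gamma_1], \calX]$, I would set $\Sigma(F) := (F(a), F(\eta))$, where $F(a) \in \calX(\Gamma_0)$ and $F(\eta)$ gives an arrow $s^*F(a) \to t^*F(a)$ in $\calX(\Gamma_1)$. The simplicial identity for $\eta$ over $\Gamma_1 \times_{\Gamma_0} \Gamma_1$ is nothing but the composition law $\eta(\gamma_1) \circ \eta(\gamma_2) = \eta(\gamma_1 \gamma_2)$, and applying $F$ turns it into the required cocycle $d_2^* F(\eta) \circ d_0^* F(\eta) = d_1^* F(\eta)$. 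On a natural transformation $\alpha: F \Rightarrow F'$ I would set $\Sigma(\alpha) := \alpha_a : F(a) \to F'(a)$; naturality of $\alpha$ at $\eta$ is exactly the commuting square in Definition~\ref{def:X-Gamma}. No stack condition on $\calX$ is needed for this part.

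Next, assuming $\calX$ is a stack, I would prove essential surjectivity of $\Sigma$ by inverting the construction via descent. Given a cocycle $(p, \varphi) \in \calX(\Gamma)$ and a principal $\Gamma$-bundle $\pi: P \to M$ with anchor $\mu: P \to \Gamma_0$, pick a trivialising cover $\{U_i \to M\}$ with local sections $\sigma_i: U_i \to P$. This yields maps $\mu_i := \mu \circ \sigma_i: U_i \to \Gamma_0$ and transition arrows $g_{ij}: U_{ij} \to \Gamma_1$ with $s \circ g_{ij} = \mu_j$ and $t \circ g_{ij} = \mu_i$. The objects $\{\mu_i^* p\} \in \calX(U_i)$ together with the isomorphisms $g_{ij}^* \varphi: \mu_j^* p \to \mu_i^* p$ form a descent datum in $\calX$, and the cocycle condition on $\varphi$ is precisely the cocycle condition on this descent datum. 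Since $\calX$ is a stack, it glues to an object $F(P) \in \calX(M)$, unique up to unique isomorphism. Extending $F$ to morphisms of $\Gamma$-bundles and checking independence of the chosen cover and sections both reduce to further applications of descent in $\calX$.

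For full faithfulness, given $F, F' \in [[\Gamma_0/\Gamma_1], \calX]$ and a cocycle morphism $\beta: \Sigma(F) \to \Sigma(F')$, I would recover a unique natural transformation $\tilde\beta: F \Rightarrow F'$ with $\tilde\beta_a = \beta$: compatibility of $\beta$ with $F(\eta)$ and $F'(\eta)$ shows that the components of $\tilde\beta$ on bundles trivialised by the atlas agree on overlaps, and the stack condition applied to the sheaf of arrows of $\calX$ produces the global $\tilde\beta$. The main obstacle I anticipate is the bookkeeping in essential surjectivity: to promote the pointwise assignment $P \mapsto F(P)$ to an honest 1-morphism of CFGs one must verify functoriality in $P$ and naturality with respect to base change simultaneously, and both must commute with a chosen cleavage of $\calX$ up to coherent natural isomorphism. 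This is precisely the step at which the stack hypothesis is decisive and is why the statement fails for an arbitrary CFG $\calX$.
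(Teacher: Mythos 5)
Your construction of $\Sigma$ matches the paper's: you whisker $F$ with the canonical atlas $a:\Gamma_0\to[\Gamma_0/\Gamma_1]$ and its structural $2$-isomorphism $\eta: a\circ s\Rightarrow a\circ t$. The paper writes $(f\circ p)(\id_{\Gamma_0})$ and $(f\circ\beta)(\id_{\Gamma_1})$ rather than ``$F(a)$'' and ``$F(\eta)$'', making explicit the 2-Yoneda step that converts a morphism of CFGs $\Gamma_0\to\calX$ into an object of the fiber $\calX(\Gamma_0)$; your notation abbreviates this but the content is identical. One point you should make explicit: $F$ is a $1$-morphism of CFGs, so it preserves pullbacks only up to the canonical isomorphisms of the fibered structure. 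To turn the identity $\eta(\gamma_1)\circ\eta(\gamma_2)=\eta(\gamma_1\gamma_2)$ into $d_2^*F(\eta)\circ d_0^*F(\eta)=d_1^*F(\eta)$ on the nose, one must either choose the cleavage on $\calX$ so that $d_i^*F(a)=F(d_i^*a)$, as the paper implicitly does, or carry along the coherence isomorphisms.

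Where you genuinely diverge from the paper is the equivalence claim. The paper only constructs $\Sigma$ and then refers to Noohi, Proposition~3.19, for the fact that $\Sigma$ is an equivalence when $\calX$ is a stack. You instead sketch the inverse by descent: trivialize a principal $\Gamma$-bundle $P\to M$ over a cover $\{U_i\}$, obtain classifying maps $\mu_i:U_i\to\Gamma_0$ and transition maps $g_{ij}:U_{ij}\to\Gamma_1$, set $x_i:=\mu_i^*p$, and take $g_{ij}^*\varphi:\mu_j^*p\to\mu_i^*p$ as gluing data; the cocycle $d_2^*\varphi\circ d_0^*\varphi=d_1^*\varphi$ pulled back along $(g_{ij},g_{jk})$ is precisely $g_{ij}^*\varphi\circ g_{jk}^*\varphi=g_{ik}^*\varphi$. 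This is the right idea and is essentially Noohi's argument. Be aware, though, that what you have is an outline and not a proof: you correctly flag that promoting $P\mapsto F(P)$ to an honest $1$-morphism of CFGs requires checking functoriality in morphisms of principal $\Gamma$-bundles, independence of cover and sections, and compatibility with base change, and each of these uses both the existence and uniqueness parts of descent (effectivity of descent data and the sheaf condition on $\mathrm{Isom}$-presheaves). The paper avoids this work by citing the reference; your version must actually carry it out. A small wording correction: your final sentence should say that the \emph{equivalence} claim fails for a general CFG $\calX$; as you yourself observed at the outset, the construction of $\Sigma$ itself needs no stack hypothesis.
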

\begin{proof}
  We follow the custom of identifying a manifold $M$ with the stack
  $\Hom(\cdot, M)$ without further comment.  Recall that the stack
  quotient $[\Gamma_0/\Gamma_1]$ is a geometric stack.  The canonical
  atlas $p:\Gamma_0 \to [\Gamma_0/\Gamma_1]$ is characterize by the
  fact that $p(id_{\Gamma_0})$ is the principal $\Gamma$ bundle
  $t:\Gamma_1\to \Gamma_0$ (see \cite{L}).  Recall further that the
  manifold  $\Gamma_1$ is the 2-categorical fiber product
  $\Gamma_0\times_{p,[\Gamma_0/\Gamma_1],p}\Gamma_0$ and the diagram
\[
%\vcenter{
\xy
(-8,8)*+{\Gamma_1 }="1"; 
(-8,-8)*+{\Gamma_0}="2"; 
(8,8)*+{\Gamma_0}="3"; 
(8,-8)*+{[\Gamma_0/\Gamma_1]}="4"; 
{\ar@{->}_{s } "1";"2"};
{\ar@{->}^{t} "1";"3"};
{\ar@{->}_{p} "2";"4"}; 
{\ar@{->}^p "3";"4"};
{\ar@{=>}^<<<{\beta} (-2,-2)*{};(1,1)*{}} ;
\endxy%}
\]
2-commutes. Also
\begin{equation}
(\beta\circ d_2)*(\beta\circ d_0) = \beta \circ d_1
\end{equation}
as natural isomorphisms from $p\circ s \circ d_0 =p\circ s \circ d_1$
to $p\circ t \circ d_2 = p\circ t \circ d_1$ (here $*$ denotes the vertical composition of natural transformations).  Consequently for any
CFG $\calX \to \Man$ and any 1-morphism $f:[\Gamma_0/\Gamma_1] \to
\calX$ of CFGs we have a natural isomorphism
\[
f\circ \beta: f\circ p \circ s \Rightarrow f\circ p \circ t
\]
satisfying 
\begin{equation}\label{eq:6.2}
(f\circ \beta\circ d_2)*(f\circ \beta\circ d_0) = f\circ \beta \circ d_1.
\end{equation}
Consider now the object $P: = (f\circ p)\,(id_{\Gamma_0})\in \calX
(\Gamma_0)$.  For any map $h:\Gamma_1\to \Gamma_0$
\[
(f\circ p\circ h)\, (id_{\Gamma_1}) = h^* ( (f\circ p)\,(id_{\Gamma_0}))
\]
Consequently 
\[
\varphi:= (f\circ \beta)\,(id_{\Gamma_1})
\]
is an isomorphism in $\calX (\Gamma_1)$ from $s^*P$ to $t^*P$.  Equation \eqref{eq:6.2} translates then into the cocycle condition 
\[
(d_2^*\varphi)\circ (d_0^*\varphi) = d_1^*\varphi.
\]
Therefore the pair $(P= (f\circ p)\,(id_{\Gamma_0}), \varphi = (f\circ
\beta)\,(id_{\Gamma_1}))$ is an object of the cocycle category
$\calX(\Gamma)$.  This defines the functor $\Sigma$ on objects.
Similarly given a morphism $\gamma:f\Rightarrow h$ in the functor
category $[[\Gamma_0/\Gamma_1], \calX]$ we get a morphism
\[
\alpha = (\gamma \circ p)\,(id_{\Gamma_0}): (f\circ
p)\,(id_{\Gamma_0}) \to (h\circ p)\,(id_{\Gamma_0})
\]
in $\calX(\Gamma_0)$.  It is not hard to check $\alpha$ is  a
morphism in $\calX(\Gamma)$ from $\Sigma(f)$ to $\Sigma(h)$.  This
defines the functor $\Sigma$ on morphisms.

A proof that $\Sigma $ is an equivalence of categories if $\calX$ is a
stack is a bit more involved.  We refer an interested reader to
\cite{Noohi}[Proposition~3.19].
\end{proof}

We are now in position to reformulate Theorem~\ref{thm:6.4} for geometric
stacks in terms of cocycles.

\begin{theorem}\label{thm:6.7}
  For any Lie groupoid $\Gamma$ the functor $\hol$ induces an
  equivalence of categories
\[
\hol_\Gamma: B^\nabla G (\Gamma)\to \transu(\Gamma).
\]
\end{theorem}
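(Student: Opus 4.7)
The plan is to deduce Theorem~\ref{thm:6.7} directly from the equivalence $\hol:B^\nabla G\to \transu$ of Theorem~\ref{thm:main} by passing to cocycle categories. The key input is Proposition~\ref{prop:last} from Appendix~\ref{app:2}, which guarantees that any equivalence $F:\calX\to\calY$ of CFGs over $\Man$ induces an equivalence $F_\Gamma:\calX(\Gamma)\to\calY(\Gamma)$ between the associated cocycle categories for an arbitrary Lie groupoid $\Gamma$. Once this machinery is in place, the proof is essentially a one-line invocation: applying $(\cdot)_\Gamma$ to the equivalence $\hol$ produces the desired equivalence $\hol_\Gamma : B^\nabla G(\Gamma)\to \transu(\Gamma)$.

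First, I would unpack what the induced functor does on a cocycle. An object of $B^\nabla G(\Gamma)$ is a pair $((P,A),\varphi)$ consisting of a principal bundle with connection $(P,A)$ over $\Gamma_0$ together with an arrow $\varphi:\fs^*(P,A)\to \ft^*(P,A)$ in $B^\nabla G(\Gamma_1)$ satisfying the cocycle condition over $\Gamma_1\times_{\Gamma_0}\Gamma_1$. Under $\hol_\Gamma$, this pair is sent to the transport functor $\hol_{\Gamma_0}(P,A)$ over $\Gamma_0$ together with the natural isomorphism $\hol_{\Gamma_1}(\varphi):\fs^*\hol_{\Gamma_0}(P,A)\Rightarrow \ft^*\hol_{\Gamma_0}(P,A)$ in $\transu(\Gamma_1)$; the cocycle condition is preserved because $\hol$ is a 1-morphism of CFGs and therefore strictly compatible with the pullback functors appearing in the cocycle condition (after a choice of cleavage). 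A morphism of cocycles is handled analogously.

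The main technical point is simply to confirm that Proposition~\ref{prop:last} applies, i.e., that $\hol$ is genuinely an equivalence of CFGs over $\Man$, which is exactly the content of Theorem~\ref{thm:main}. Alternatively, one can take the route through Proposition~\ref{prop:Noohi}: for the stack $\calX=[\Gamma_0/\Gamma_1]$, one has equivalences $B^\nabla G(\Gamma)\simeq [[\Gamma_0/\Gamma_1],B^\nabla G]$ and $\transu(\Gamma)\simeq [[\Gamma_0/\Gamma_1],\transu]$; Theorem~\ref{thm:6.4} then provides the equivalence $\hol_*$ between the functor categories, and one checks that the composite $\Sigma\circ\hol_*\circ\Sigma^{-1}$ agrees (up to natural isomorphism) with $\hol_\Gamma$. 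This second route is slightly more indirect but has the virtue of not requiring Appendix~\ref{app:2}.

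The only real obstacle is purely bookkeeping: verifying that the functor $\hol_\Gamma$ one writes down on cocycles (by applying $\hol$ objectwise on $\Gamma_0$ and $\Gamma_1$) actually coincides with the abstract functor produced either by Proposition~\ref{prop:last} or by the composite $\Sigma\circ \hol_*\circ \Sigma^{-1}$. This is a strictness-versus-weakness comparison that is routine once a cleavage has been fixed on $B^\nabla G$ and $\transu$, since $\hol$ is defined pointwise and commutes with pullback along smooth maps by construction (Lemma~\ref{prop:fund_fact}). With that identification in hand, the equivalence follows immediately.
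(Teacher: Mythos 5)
Your proposal is correct, and interestingly you have swapped the roles of the two routes that the paper itself identifies. The paper's actual proof of Theorem~\ref{thm:6.7} is your ``alternative'' route: it combines Theorem~\ref{thm:6.4} (the functor categories $[[\Gamma_0/\Gamma_1],B^\nabla G]$ and $[[\Gamma_0/\Gamma_1],\transu]$ are equivalent via $\hol_*$) with Proposition~\ref{prop:Noohi} (each functor category is equivalent to the corresponding cocycle category). Your primary route---applying Proposition~\ref{prop:last} directly to the equivalence $\hol:B^\nabla G\to\transu$ of Theorem~\ref{thm:main}---is exactly what the paper mentions immediately after the proof as an alternative viewpoint. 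Both arguments are valid. One small but genuine distinction worth noting: the route through Proposition~\ref{prop:Noohi} requires that $B^\nabla G$ and $\transu$ be stacks (this is the hypothesis under which $\Sigma$ is an equivalence), so the paper's proof implicitly leans on Corollary~\ref{cor:5.5}. Your route through Proposition~\ref{prop:last} needs only that $\hol$ is an equivalence of CFGs, so it is logically slightly lighter and does not require knowing $\transu$ is a stack. Your unpacking of what $\hol_\Gamma$ does on a cocycle, and the remark that one should verify the objectwise description agrees with the abstractly induced functor up to the choice of cleavage, are correct and are precisely the kind of bookkeeping that the paper leaves implicit.
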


\begin{proof}
  By Theorem~\ref{thm:6.4} the functor categories $[[\Gamma0/\Gamma_1]
  ,B^\nabla G]$ and $[[\Gamma0/\Gamma_1] ,\transu]$ are equivalent.
  By Proposition~\ref{prop:Noohi} the first functor category is
  equivalent to the cocycle category $B^\nabla G (\Gamma)$ and the
  second functor category is equivalent to the cocycle category
  $\transu(\Gamma)$.
\end{proof}

Alternatively one can view Theorem~\ref{thm:6.7} as an instant of the following general fact: 
\begin{proposition}\label{prop:last}
  Suppose $\Gamma$ is a Lie groupoid, $\calX,\calY\to \Man$ are two
  categories fibered in groupoids and $F:\calX \to \calY$ is a
  1-morphism of fibered categories. Then the functor $F$ induces a
  functor
\[
F_\Gamma:\calX(\Gamma)\to \calY(\Gamma)
\]
between cocycle categories. Moreover if $F$ is an equivalence of
categories then so is $F_\Gamma$.
\end{proposition}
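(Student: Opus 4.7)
The plan is to construct $F_\Gamma$ by transporting cocycle data through $F$ together with the comparison isomorphisms coming from the fact that $F$ preserves Cartesian arrows. Fix cleavages on $\calX$ and $\calY$. For any arrow $g:m\to n$ in $\Man$ and any $p\in\calX(n)$, the image under $F$ of the Cartesian arrow $g^*p\to p$ is Cartesian in $\calY$ above $g$; comparing it with the cleavage of $\calY$ yields a canonical isomorphism $c_g(p): g^*F(p)\xrightarrow{\cong} F(g^*p)$ in $\calY(m)$, natural in $p$ and compatible with composition of arrows in $\Man$. Given a cocycle $(p,\varphi)\in\calX(\Gamma)$, I would define $F_\Gamma(p,\varphi):=(F(p),\Phi)$, where
\[
\Phi := \bigl(c_t(p)\bigr)^{-1}\circ F(\varphi)\circ c_s(p)\colon s^*F(p)\longrightarrow t^*F(p).
\]

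Next I would verify that $(F(p),\Phi)$ is actually a cocycle, i.e.\ that $d_2^*\Phi\circ d_0^*\Phi = d_1^*\Phi$. Applying $F$ to the identity $d_2^*\varphi\circ d_0^*\varphi = d_1^*\varphi$ and then reexpressing each $d_i^*\Phi$ in $\calY(\Gamma_1\times_{\Gamma_0}\Gamma_1)$ via the comparison isomorphisms $c_{s\circ d_i}(p)$ and $c_{t\circ d_i}(p)$ reduces the cocycle identity for $\Phi$ to a diagram chase witnessing compatibility of the $c$'s with composition and with the face maps $d_0,d_1,d_2$. I would then define $F_\Gamma$ on a morphism $\alpha\colon(p,\varphi)\to(p',\varphi')$ by $F_\Gamma(\alpha):=F(\alpha)$; the square characterizing morphisms of cocycles is transported by $F$ and conjugated by the $c$'s, so $F(\alpha)$ is indeed a morphism in $\calY(\Gamma)$. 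Functoriality of $F_\Gamma$ is then immediate from functoriality of $F$.

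For the equivalence statement, choose a quasi-inverse $G\colon\calY\to\calX$ with natural isomorphisms $\eta\colon\id_\calX\Rightarrow GF$ and $\epsilon\colon FG\Rightarrow\id_\calY$. The construction above gives a functor $G_\Gamma\colon\calY(\Gamma)\to\calX(\Gamma)$. For each cocycle $(p,\varphi)\in\calX(\Gamma)$ the component $\eta_p\colon p\to GF(p)$ lies in $\calX(\Gamma_0)$; I would check that it constitutes a morphism in $\calX(\Gamma)$ from $(p,\varphi)$ to $G_\Gamma F_\Gamma(p,\varphi)$, using the naturality of $\eta$ applied to $\varphi$ together with the compatibility of $\eta$ with the comparison isomorphisms $c^\calX,c^\calY$. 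The family $\{\eta_p\}_{p}$ then assembles into a natural isomorphism $\eta_\Gamma\colon\id_{\calX(\Gamma)}\Rightarrow G_\Gamma F_\Gamma$; the symmetric argument using $\epsilon$ produces $\epsilon_\Gamma\colon F_\Gamma G_\Gamma\Rightarrow\id_{\calY(\Gamma)}$, so that $F_\Gamma$ is an equivalence of categories.

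The main obstacle is purely bookkeeping: to track the cleavage comparison isomorphisms $c_g(p)$ cleanly enough that the cocycle condition, and later the coherence of $\eta_\Gamma$ and $\epsilon_\Gamma$, are visibly preserved. Nothing conceptually new happens beyond functoriality of pullback up to canonical 2-isomorphism, but organizing the notation so that the face-map compatibilities appear as obvious naturality squares, rather than a thicket of nested $c_g$'s, absorbs most of the work. Once this bookkeeping is in place, every remaining verification is a routine diagram chase of the same flavor.
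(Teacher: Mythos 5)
Your proposal is correct and takes the same essential approach as the paper: push cocycle data through $F$ and verify the cocycle and morphism conditions. The only real difference is how the two arguments handle the mismatch between ``pull back then apply $F$'' and ``apply $F$ then pull back.'' You fix cleavages on both $\calX$ and $\calY$ once and for all and then explicitly carry along the canonical comparison isomorphisms $c_g(p)\colon g^*F(p)\to F(g^*p)$, defining $\Phi$ as a conjugate of $F(\varphi)$ by $c_s$ and $c_t$; the cocycle identity for $\Phi$ then follows from the coherence of the $c$'s. The paper instead sidesteps the comparison isomorphisms altogether by \emph{choosing} the pullbacks of $F(x)$ in $\calY$ to be $d_i^*F(x):=F(d_i^*x)$ (legitimate because $F$ preserves Cartesian arrows and pullbacks are determined only up to canonical isomorphism), so that $F(\varphi)$ already has the right source and target and the cocycle condition drops out of $F(d_i^*\varphi)=d_i^*F(\varphi)$ directly. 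Your version is the ``strictly cleaved'' account of the same argument: it costs you the bookkeeping with the $c_g$'s but is cleaner if one insists the cleavage of $\calY$ be fixed independently of $F$, whereas the paper's choice is slicker but implicitly uses the cleavage-independence of the cocycle category. Both handle the equivalence statement the same way, by applying the construction to a quasi-inverse and checking that the components of the unit and counit give natural isomorphisms at the level of cocycle categories.
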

We discuss a proof of   Proposition~\ref{prop:last} in Appendix~\ref{app:2}.

\begin{appendix} 
\part{Appendices}
%%%%%%%%%%%%%%%%%%%%%%%%%%%%%%%%%%%%%%%%%%%%%%%%%%%%%%%%%%%%%%%%%%%%%%%%%%%%%%
  \section{Diffeological spaces and diffeological
    groupoids}\label{app:1}
%%%%%%%%%%%%%%%%%%%%%%%%%%%%%%%%%%%%%%%%%%%%%%%%%%%%%%%%%%%%%%%%%%%%%%%%%%%%%%

  The goal of this section is to recall two definitions and some
  properties of the category $\DS$ of diffeological spaces and to
  prove the folklore result that the thin fundamental groupoid is a
  groupoid internal to the category of diffeological spaces.  We start
  by recalling the ``traditional'' definition of a diffeological
  space, which is due to Souriau \cite{S}.  A similar notion was
  independently introduced by K.-T.\ Chen \cite{Chen1,Chen2}.  Our
  primary references are \cite{IZ} and \cite{BH}.
\begin{definition}[Diffeology]\label{def:diffeol1}
  A {\sf diffeology} on a set $X$ is a collection of functions 
  \[D_X\subset\{f:U\rightarrow X\ |\ U\subset \R^n\ \textrm{open},\
  n\in\mathbb{N}\}\] satisfying the following three conditions:
  \begin{enumerate}
\item  every
  constant map is in $D_X$;
\item if $V\subset\R^m$ is open, $f:U\to X$ is in $D_X$, and $g:V\to
  U$ is a smooth map, then the composite $f\circ g: V\to X$ is also in
  $D_X$;
\item if $\{U_i\}$ is an open cover of $U\subset \R^n$ and $f:
  U\to X$ is a map of sets such that
 $f|_{U_i}\to X$ is in $D_X$, then $f:U\to X$ is in $D_X.$
\end{enumerate}

The pair $(X,D_X)$ is called a {\sf diffeological space}, and the
elements of $D_X$ are called {\sf plots}.
\end{definition}

\begin{remark}
  Just as in the case of  topological spaces, it is common to refer to a
  diffeological space $(X, D_X)$ simply as $X$ with the choice of a
  diffeology $D_X$ suppressed from the notation.
\end{remark}

\begin{example}
  Any smooth manifold $M$ is a diffeological space.  The set in
  question is the underlying set of the manifold $M$, and the
  collection of plots $D_M$ consists of all smooth maps from all open
  subsets $U $ of $ \bigsqcup_{n=0}^\infty \R^n$ to the manifold $M$.
\end{example}

\begin{definition}\label{def:mapofdiffeos}
  A {\sf map of diffeological spaces} or a {\sf smooth map} from a
  diffeological space $(X,D_X)$ to a space $ (Y,D_Y)$ is a map of
  sets $f:X \to Y$ such that for any plot $p:U \to X$ in $D_X$, the
  composite $f\circ p$ is in $D_Y$.
\end{definition}

\begin{remark}[The category $\DS$ of diffeological spaces]
  The composite of two smooth maps between diffeological spaces is
  smooth. Thus, diffeological spaces and smooth maps form a category
  which we denote by $\DS$.  It is well-known \cite{IZ} that the
  category of manifolds embeds into the category of diffeological
  spaces. That is, a map $f:M\to N$ between two manifolds is smooth in
  the diffeological sense if and only if it is a smooth map of
  manifolds.
\end{remark}

The category $\DS$ has many nice properties \cite{LaubingerDiss,
  BH}. For instance, it has all small limits and colimits. Also the
space of maps between two diffeological spaces is again naturally a
diffeological space.
It will be useful for us to write down explicitly several
corresponding constructions.  We start with a definition.

\begin{definition}[Subspace diffeology] Let $(X, D_X)$ be a
  diffeological space and $Y\subset X$ a subset.  The
  {\sf subspace diffeology} $D_Y$ is the set:
\[
D_Y:= \{(p:U\to X) \in D_X\mid p(U) \subset Y\}
\]
\end{definition}
To introduce the quotient diffeology it is convenient to switch
our point of view and think of diffeological spaces as certain kinds
of sheaves of sets.

\begin{definition}[The category $\mathsf{Open}$] The objects of the
  category $\mathsf{Open}$ are by definition all open subsets of all
  coordinate vector spaces $\R^n$, $n\geq 0$, or, equivalently, open
  subsets of $\bigsqcup_{n=0}^\infty \R^n$. A morphism in
  $\mathsf{Open}$ from an open set $U$ to an open set $V$ is a smooth
  map $f:U\to V$.
\end{definition}
\begin{definition}
  A (set-valued) presheaf $\CR$ on a category $\Open$ is a contravariant
  functor from $\Open$ to the category $\Set$ of sets:
\[
\CR:\Open^{op}\to \Set.
\]  
\end{definition}
\begin{notation}
  Given a presheaf $\CR:\Open^{op}\to \Set$ and an arrow $f:U\to V$ in
  $\Open$, we get a map of sets $\CR(f): \CR(V)\to \CR(U)$, which we
  think of as a pullback along $f$.  Thus, given an element $s\in
  \CR(V),$ we denote $\CR(f) s \in \CR(U)$ by $f^*s.$ If $U\subset V$
  and $f$ is the inclusion, we may also write $s|_U$ for $f^*s$.
\end{notation}
\begin{definition}
  A presheaf $\CS:\Open^{op}\to \Set$ is a {\sf sheaf} if for any open set
  $U\in \Open$, any open cover $\{U_i\}_{i\in I}$ of $U$, and
  any collection of elements $s_i\in \CS (U_i)$ with 
\[
s_i|_{U_i\cap    U_j} = s_j |_{U_i\cap U_j},
\] 
there exists a unique element $s\in \CS(U)$ with
\[
s|_{U_i} = s_i.
\]
\end{definition} 
\begin{example} \label{ex:underline{X}}
Any set $X$ defines a sheaf $\underline{X}$ on $\Open$:
\[
\underline{X}(U\xrightarrow{f}V): = Map(U,X)\xleftarrow{f^*}Map(V,X), 
\]
where $Map(U,X)$ denotes the set of maps from $U$ to $X$ and $f^*$
denotes the pullback by $f$.
Note that for the one element set $*:=\R^0 \in \Open$ the evaluation map
\[
\underline{X}(*) = Map(*,X)\xrightarrow{\simeq} X
\]
is a bijection of sets.
\end{example}

\begin{definition}
  A  presheaf $\CR:\Open^{op}\to \Set$ is {\sf subpresheaf} of a presheaf
$\CS:\Open^{op}\to \Set$ if for every open set $U\in \Open$
\[
\CR(U)\subset \CS(U).
\] 
% We write
% \[
% \CR \subset \CS.
% \]
% If additionally $\CR, \CS$ are sheaves we say that $\CR$ is a {\sf
%   subsheaf} of $\CS$.
 \end{definition}
 We are now in position to define diffeologies in terms of sheaves.
\begin{definition}[Diffeology, as a sheaf]\label{def:diffeol2} 
  Let $*$ be the one element set $\R^0.$ A {\sf diffeology} $D=D_X$ on
  a set $X$ is a subsheaf of the sheaf $\underline{X}$, defined in
  Example~\ref{ex:underline{X}}, such that $D(*) = \underline{X}(*)$.
\end{definition}

\begin{remark}
  If $D$ is a diffeology on a set $X$ in the sense of
  Definition~\ref{def:diffeol2}, then $D(U)\subset \underline{X}(U)
  \equiv Map(U, X)$ for any $U\in \Open$.  It is for this reason that
  elements of $D(U)$ may be thought of as the plots of
  Definition~\ref{def:diffeol1}.
\end{remark}

Since the category of diffeological spaces has all small colimits
(see \cite{BH}), it has quotients.  Explicitly they can
be constructed as follows.
\begin{construction}[Quotient diffeology]\label{constr:quot-diffeo}
  Let $(X, D_X)$ be a diffeological space, $R\subset X\times X$ an
  equivalence relation, $Y =X/R$ the set of equivalence classes and
  $q:X\to Y$ the quotient map.  The {\sf quotient diffeology} $D_Y$ on
  $Y$ is constructed as the sheafification of the presheaf
  $D_{pre}(Y)$ defined by
\[
D_{pre}(Y):= \{ p:U\to Y\mid \textrm{ there is } (\tilde{p}:U\to X)
\in D_X \textrm{ with } q\circ \tilde{p} = p\}.
\]
Explicitly, for any open $U\subset \R^n,$ a map $p:U\to Y$ is
a plot in the quotient diffeology $D_Y$ if and only if for every
$u\in U$ there is an open neighborhood $V$ of $u$ in $U$ and
$\tilde{p}:V\to X$ with
\[
q\circ \tilde{p} = p|_V.
\]

The quotient map $q:(X,D_X)\to (Y,D_Y)$ has the following universal
property.  Give the product $X\times X$ the product diffeology
$D_X\times D_X$(see Remark~\ref{rmrk:product_diflogy} below) and
$R\subset X\times X$ the subspace diffeology.  Then $X \xrightarrow{q}
Y$ is the coequalizer of the diagram $R\toto X$ in the category $\DS$
of diffeological spaces.
\end{construction}

\begin{lemma} \label{lemma:3.18} Let $q:X\to Y$ be a quotient map
  between two diffeological spaces (that is, $q$ is the coequalizer of
  a diagram $R\toto X$ in $\DS$ for some equivalence relation $R$ on
  $X$), and let $Z$ be another diffeological space. Then a map $f:Y\to
  Z$ is smooth if and only if for any plot $p:U\to X$ the composite
  $f\circ q \circ p:U\to Z$ is a plot on $Z$.
\end{lemma}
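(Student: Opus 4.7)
The plan is to exploit the explicit local description of plots in the quotient diffeology given in Construction~\ref{constr:quot-diffeo}, together with the sheaf axiom (3) of Definition~\ref{def:diffeol1} for the diffeology on $Z$. First I would dispose of the forward direction: the quotient map $q:X\to Y$ is itself smooth (by the universal property of the coequalizer, or directly because for any plot $p:U\to X$ the map $q\circ p$ is tautologically in $D_{pre}(Y)$ and hence a plot on $Y$). Therefore if $f$ is smooth, the composite $f\circ q\circ p:U\to Z$ is a plot for every plot $p:U\to X$.

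For the converse, assume that $f\circ q\circ p$ is a plot on $Z$ for every plot $p$ on $X$, and let $r:U\to Y$ be an arbitrary plot. I have to show that $f\circ r:U\to Z$ is a plot. By the explicit description of the quotient diffeology recalled in Construction~\ref{constr:quot-diffeo}, for every $u\in U$ there exist an open neighborhood $V\subset U$ of $u$ and a plot $\tilde r:V\to X$ with $q\circ\tilde r = r|_V$. Restricting $f\circ r$ to $V$ gives
\[
(f\circ r)|_V \;=\; f\circ q\circ \tilde r,
\]
which is a plot on $Z$ by hypothesis. Since this holds in a neighborhood of every $u\in U$, axiom (3) of Definition~\ref{def:diffeol1} (i.e., the sheaf condition for the diffeology $D_Z$) implies that $f\circ r$ itself is a plot on $Z$. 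Hence $f$ is smooth.

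There is no real obstacle here; the only point one has to be careful about is that a plot $r:U\to Y$ need not admit a global lift to $X$ through $q$, so one cannot directly write $r=q\circ \tilde r$ on all of $U$. This is precisely why Construction~\ref{constr:quot-diffeo} sheafifies $D_{pre}(Y)$, and why the argument above is carried out locally on $U$ and then patched together using axiom~(3). Once that is recognized, the statement is essentially the universal property of the coequalizer $q:X\to Y$ in $\DS$, rephrased in plot-theoretic language.
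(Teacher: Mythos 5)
Your proof is correct and follows essentially the same route as the paper's: the forward direction is immediate from smoothness of $q$, and the converse uses the local lifts supplied by Construction~\ref{constr:quot-diffeo} together with the sheaf axiom for $D_Z$ to patch the local plots $(f\circ r)|_V = f\circ q\circ\tilde r$ into a global plot $f\circ r$. The only cosmetic difference is that you spell out why $q$ itself is smooth, which the paper leaves implicit.
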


\begin{proof}
  If $f$ is smooth, then $q\circ f$ is smooth.  Therefore for any plot
  $p:U\to X$ the composite $f\circ q \circ p:U\to Z$ is a plot on
  $Z$. 

  Conversely suppose that for any plot $p:U\to X$ the composite
  $f\circ q \circ p:U\to Z$ is a plot on $Z$ and suppose $r:U\to Y$ is
  a plot.  By definition of quotient diffeology there is an open cover
  $\{U_\alpha\}$ of $U$ and a collection of plots $r_\alpha :U_\alpha
  \to X$ so that
\[
q\circ r_\alpha = r|_{U_\alpha}.
\]
By assumption,
\[
(f\circ r)|_{U_\alpha} = f\circ (r_{U_\alpha}) = f\circ q \circ
r_\alpha :U_\alpha \to Z
\]
are plots. Since a diffeology is a sheaf, $f\circ r$ is a plot. 
Therefore $f$ is smooth.
\end{proof}

Since the category of diffeological spaces has all small limits, 
it has fiber products.  Explicitly, they can be constructed as follows.

\begin{construction}[Fiber product diffeology]\label{constr:fpdiffeo}
  Let $f:(X,D_X) \to (Z,D_Z)$ and $g:(Y,D_Y) \to (Z,D_Z)$ be two
  maps of diffeological spaces.  We construct their fiber
  product % 
  as follows: the underlying set is the fiber product
\[
X\times _{f,Z,g} Y := \{ (x,y)\in X\times Y \mid f(x) = g(y)\},
\]
and the set of plots % $D(X\times_Z Y)$
is %to be
\[
D(X\times_{f,Z,g} Y): = 
\{ (p_X, p_Y) \in D_X\times D_Y \mid f\circ p_X = g\circ p_Y\}.
\]
\hfill It is not hard to check that the diffeological space $(X\times
_{f,Z,g} Y, D(X\times_{f,Z,g} Y))$ together with the obvious maps to
$(X, D_X)$ and $(Y, D_Y)$ is a fiber product in category of
diffeological spaces.  $\Box$
\end{construction}

\begin{remark} \label{rmrk:product_diflogy}
  If $Z$ is a single point then the fiber product $(X\times _{f,Z,g}
  Y, D(X\times_{f,Z,g} Y))$ is the product of $(X,D_X)$ and $(Y,D_Y)$.
  Thus the construction of the fiber product diffeology includes the
  construction of the (binary) product diffeology as a special case.
\end{remark}

Next we construct a diffeology on the space $\calP(M)$ of paths with
sitting instances in a manifold $M$ (q.v.\ Notation~\ref{calP(M)}).
By the quotient construction this, in turn, defines a diffeology on
the set $ \calP(M)/_{\sim}$ of paths modulo thin homotopy.

\begin{definition}[Path space diffeology]\label{def:pathspacedifeol}
  As before, denote the set of paths with sitting instances in a
  manifold $M$ by $\calP(M)$ (all paths in $\calP(M)$ are
  parameterized by $[0,1]$).  Let $U$ be an open set in some $\R^n$.
  Define a map of sets $p:U\to \calP(M)$ to be a plot if
  the associated map
\begin{equation}\label{phat}
\hat{p} : U\times [0,1] \to M, \qquad \hat{p}(u,x) := p(u)(x) 
\end{equation}
is smooth. \hfill $\Box$
\end{definition}
\begin{remark}
  Strictly speaking, one should check that the collection of plots on
  the set of paths $\calP(M)$ given in
  Definition~\ref{def:pathspacedifeol} forms a sheaf on $\Open$.
  This is straightforward, and we leave it to an interested reader.
\end{remark}
\begin{remark}
  If $X$ and $Y$ are two diffeological spaces, then the
  space $\Hom(X, Y)$ 
   of smooth maps from $X$ to $Y$ 
  is also a diffeological space: $p:U\to \Hom(X,Y)$ is a plot if
  and only if $\hat{p}:U\times X \to Y$ is smooth.  One can show that
  the mapping space diffeology on $\calP(M)$ agrees with the
  diffeology defined in Definition~\ref{def:pathspacedifeol}. The key
  issue is that a map $\hat{p}: U\times [0,1] \to M$ is smooth as a map
  of manifolds with boundary if and only if it is smooth as a map of
  diffeological spaces, see \cite{IZ}.
\end{remark}

\begin{lemma}\label{lem:conc} Let $\calP(M)$ be the set of paths with
  sitting instances in a manifold $M$ with path space diffeology
  (q.v.\ \ref{def:pathspacedifeol}).
\begin{enumerate} 
 \item The evaluation  maps
   \[ev_0:\calP(M)\to M, \;\;\; \gamma \mapsto \gamma(0)\quad \text{
     and } \quad ev_1:\calP(M)\to M, \;\;\; \gamma \mapsto \gamma(1)\]
   are smooth.

\item
The concatenation map
\[\tilde{\fm}:\calP(M)\times_{ev_0,M,ev_1}\calP(M) \to \calP(M)\]
defined by: 
\begin{equation} \label{eq:3.1concat}
\tilde{\fm}(\gamma,\tau)\, (t) := 
\left\{ 
\begin{array}{lrl} \tau (2t) & \mathrm{if} & t\in [0,1/2] \\
\gamma (2t-1) & \mathrm{if} & t \in [1/2,1] \\ 
\end{array} \right.  \qquad 
\end{equation}
 is a smooth.
\end{enumerate}
\end{lemma}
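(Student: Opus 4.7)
My plan is to establish both parts by checking plot-level smoothness via the path space diffeology (Definition~\ref{def:pathspacedifeol}): it suffices to verify smoothness of the explicitly-described associated maps $U \times [0,1] \to M$. For part (1), take any plot $p: U \to \calP(M)$ with smooth associated map $\hat{p}: U \times [0,1] \to M$. Then $ev_i \circ p$ sends $u$ to $\hat{p}(u, i)$, which is the composition of $\hat{p}$ with the smooth map $u \mapsto (u, i)$, hence smooth. This holds for both $i = 0$ and $i = 1$, so $ev_0$ and $ev_1$ are smooth.

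For part (2), I would first unwind Construction~\ref{constr:fpdiffeo} to see that a plot of the fiber product $\calP(M) \times_{ev_0, M, ev_1} \calP(M)$ is a pair of plots $p, q: U \to \calP(M)$ satisfying $\hat{p}(u, 0) = \hat{q}(u, 1)$ for every $u \in U$. The associated map of the composite $\tilde{\fm} \circ (p, q)$ is the piecewise function
\[
F(u, t) = \begin{cases} \hat{q}(u, 2t), & t \in [0, 1/2], \\ \hat{p}(u, 2t - 1), & t \in [1/2, 1], \end{cases}
\]
and the goal is to show that $F$ is smooth on $U \times [0, 1]$. On each of $U \times [0, 1/2]$ and $U \times [1/2, 1]$ the respective branch is smooth as a composition of $\hat{q}$ or $\hat{p}$ with a smooth affine reparameterization, and the fiber-product condition ensures that $F$ is well-defined and continuous at the interface $t = 1/2$.

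The main obstacle will be smoothness of $F$ across $t = 1/2$, and this is exactly where the sitting-instance condition is essential. For each $u \in U$, the path $q(u)$ is constant on a neighborhood of $1$ in $[0,1]$ and the path $p(u)$ is constant on a neighborhood of $0$, so $\partial_s^b \hat{q}(u, 1) = 0$ and $\partial_s^b \hat{p}(u, 0) = 0$ for every $u$ and every $b \geq 1$. Since these are identities in $u \in U$, taking further $u$-derivatives preserves them, so the mixed partials $\partial_u^a \partial_t^b F$ computed from the left and from the right of $t = 1/2$ agree to all orders: both vanish for $b \geq 1$, and both equal $\partial_u^a \hat{q}(u, 1) = \partial_u^a \hat{p}(u, 0)$ for $b = 0$. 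Combined with the observation that on the subset of $U \times [0, 1]$ where $2t$ (respectively $2t - 1$) lies in the sitting-instance of $q(u)$ at $1$ (respectively of $p(u)$ at $0$), the function $F$ literally coincides with the smooth function $(u, t) \mapsto \hat{q}(u, 1)$, this yields the required joint $C^\infty$ smoothness of $F$ at the interface and completes the proof.
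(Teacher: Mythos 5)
Your proof is correct and follows the paper's overall strategy (checking plot-level smoothness via the path space diffeology); the noteworthy difference is that you actually justify the critical step that the paper glosses over. For part~(2), the paper's proof simply writes out the piecewise formula for $\widehat{\tilde{\fm}\circ p}$ and asserts, without argument, that it is smooth. Your proof supplies the missing justification: you isolate the sitting-instance condition as the reason the two branches of the formula patch together to a $C^\infty$ function across $t=1/2$, and you verify this by showing that every one-sided mixed partial $\partial_u^a\partial_t^b F$ at $t=1/2$ agrees from both sides (vanishing for $b\geq 1$, and matching via the fiber-product condition for $b=0$), which by the standard gluing lemma for smooth functions across a hyperplane gives joint smoothness. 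This is the right argument, and it usefully records where sitting instances do their work, a point the paper mentions informally near Definition/Proposition~\ref{defprop:thingpd} but omits from this proof.

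One small caution about your closing parenthetical: the set of $(u,t)$ on which $2t$ lies in the sitting-instance of $q(u)$ is generally \emph{not} an open neighborhood of $U\times\{1/2\}$, because the sitting-instance width can shrink as $u$ varies (the plot diffeology imposes no uniform lower bound). So the observation that $F$ ``literally coincides with a smooth function'' on that subset does not by itself yield smoothness at the interface. Fortunately this remark is not load-bearing; your matching-of-partials argument carries the proof on its own, and that argument is correct.
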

\begin{proof}
  By Definition \ref{def:pathspacedifeol} of the path space
  diffeology, for any plot $p:U\to \calP(M),$ we have $ev_0\circ
  p=\hat p|_{U\times \{0\}}$, where $\hat{p}: U\times [0,1] \to M$ is
  the associated map. Since $\hat{p}$ is smooth the map $ev_0$ is a
  smooth as well.  A similar argument shows that the map $ev_1$ is
  smooth.

Recall that a plot $p$  of the fiber product $\calP(M)\times_M\calP(M)$ is given by a pair of plots 
\[p=(p_1:U\to\calP(M),\ p_2:U\to\calP(M))\] 
with $p_1(x)(1)=p_2(x)(0)$ 
for all $x\in U$ (Construction~\ref{constr:fpdiffeo}). We have
\[
\widehat{\tilde{\fm}\circ p} \, (u,t) =
\left\{ 
\begin{array}{lrl} p_1(u,2t) & \mathrm{if} & t\in [0,1/2] \\
p_2(u, 2t-1) & \mathrm{if} & t \in [1/2,1] \\ 
\end{array} 
\right. 
\]
Since the map $\widehat{\tilde{\fm}\circ p}$ is smooth for all plots
$p$, the map $\tilde{\fm}\circ p$ is smooth for all plots $p$.  Hence
the map $\tilde\fm$ is smooth.
\end{proof}

\begin{definition}[Diffeology on the space of thin homotopy classes of paths]
  As before, denote the set of thin homotopy classes of paths with
  sitting instances in a manifold $M$ by $\calP(M)/_{\sim}$.  We
  define the diffeology on $\calP(M)/_{\sim}$ to be the quotient
  diffeology induced by the map $\calP(M)\to \calP(M)/_{\sim}$.
\end{definition}

\begin{definition}[q.v.\
  \protect{\cite[8.3]{IZ}}] \label{def:diffeolgroupoid} A {\sf
    diffeological groupoid} $\Gamma$ is a groupoid object in the
  category $\DS$ of diffeological spaces.  That is, the sets of
  objects and arrows, $\Gamma_0$ and $\Gamma_1,$ are diffeological
  spaces, and the structure maps $\fs,\ft,\fm,\fri,$ and $\fu$
  (q.v. Notation \ref{notation:gpds}) are maps of diffeological
  spaces.
\end{definition}

As observed in the Definition/Proposition~\ref{defprop:thingpd},
for a manifold $M$, the thin fundamental groupoid $\pithin (M)$ is a
groupoid with the set of objects the manifold $M$ and the set of
morphisms the set $ \calP(M)/_{\sim}$ of paths modulo thin
homotopy.  
We are now in position to state and
prove the following folklore result.

\begin{proposition}\label{thm:gpoidpf}
  The thin fundamental groupoid $\pithin(M)$ of a manifold $M$ is a
  diffeological groupoid.
\end{proposition}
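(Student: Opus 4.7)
My plan is to verify, one structure map at a time, that the five maps $\fs, \ft, \fu, \fri, \fm$ defining the groupoid $\pithin(M)$ are all smooth in the diffeological sense. The object space $\pithin(M)_0 = M$ is already diffeological as a manifold, and the arrow space $\pithin(M)_1 = \calP(M)/_{\sim}$ carries the quotient diffeology inherited from the path space diffeology on $\calP(M)$. The standard tactic will be to first establish smoothness of an appropriate lift at the level of $\calP(M)$ (where Lemma~\ref{lem:conc} does the work in the nontrivial cases) and then descend to the quotient, invoking either the universal property of the quotient diffeology or its characterization in Lemma~\ref{lemma:3.18}.

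The four easier maps should fall out quickly. For $\fs$ and $\ft$, the evaluation maps $ev_0, ev_1 \colon \calP(M) \to M$ from Lemma~\ref{lem:conc} are smooth and, since thin homotopies preserve endpoints, factor through the quotient, so $\fs$ and $\ft$ are smooth by Lemma~\ref{lemma:3.18}. For $\fu$, the assignment $u \colon M \to \calP(M)$, $x \mapsto 1_x$, has associated map $\hat u(x, t) = x$, which is smooth; hence $u$ is a smooth map by Definition~\ref{def:pathspacedifeol}, and post-composing with the quotient gives smoothness of $\fu$. For $\fri$, the reversal $r \colon \calP(M) \to \calP(M)$, $\gamma \mapsto \gamma(1 - \cdot)$, satisfies $\widehat{r \circ p}(u, t) = \hat p(u, 1 - t)$ for any plot $p$, so $r$ is smooth; since a thin homotopy $H(s, t)$ from $\gamma_0$ to $\gamma_1$ reverses to a thin homotopy $H(s, 1 - t)$ from $\gamma_0^{-1}$ to $\gamma_1^{-1}$, $r$ descends to a smooth $\fri$.

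The multiplication $\fm$ is the main obstacle, because its domain is a fibre product of quotients, not itself a quotient of $\calP(M) \times_M \calP(M)$, so Lemma~\ref{lemma:3.18} does not apply directly. Instead, I plan to verify smoothness by hand: given a plot $p = (p_1, p_2) \colon U \to \calP(M)/_{\sim} \times_M \calP(M)/_{\sim}$, I will show that $\fm \circ p$ is a plot on $\calP(M)/_{\sim}$. Around each $u_0 \in U$, the quotient diffeology yields a neighbourhood $V$ together with lifts $\tilde p_1, \tilde p_2 \colon V \to \calP(M)$ of $p_1|_V$ and $p_2|_V$. The critical observation is that the fibre product condition $\fs \circ p_1 = \ft \circ p_2$ forces $\tilde p_1(u)(0) = \tilde p_2(u)(1)$ for all $u \in V$ automatically, since endpoints are already well-defined on thin homotopy classes; this makes $(\tilde p_1, \tilde p_2)$ a plot into $\calP(M) \times_M \calP(M)$. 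Applying the smooth concatenation $\tilde{\fm}$ of Lemma~\ref{lem:conc} and then the quotient map $q$ produces a local factorization $(\fm \circ p)|_V = q \circ \tilde{\fm} \circ (\tilde p_1, \tilde p_2)$, which exhibits $(\fm \circ p)|_V$ as a plot on $\calP(M)/_{\sim}$. The sheaf property of the quotient diffeology then glues these local pieces into a global plot, so $\fm$ is smooth, and $\pithin(M)$ is a diffeological groupoid.
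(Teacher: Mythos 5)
Your proof is correct and follows essentially the same route as the paper's: handle $\fs$, $\ft$ via the evaluation maps and Lemma~\ref{lemma:3.18}, treat $\fm$ by locally lifting plots on the fiber product of quotients to $\calP(M)\times_M\calP(M)$ (using that the endpoint condition is detected at the quotient level), applying the smooth concatenation $\tilde{\fm}$, and gluing via the sheaf property. The only difference is that you spell out $\fu$ and $\fri$ explicitly, whereas the paper relegates them to ``similar.''
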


\begin{proof}
  We need to show that the five structure maps $\fs, \ft, \fm, \fu,$ and $
  \fri$ of the groupoid $\pithin(M)=\{ \calP(M)/_{\sim} \}\toto M$ are
  maps of diffeological spaces, i.e. smooth
  (q.v. Notation~\ref{notation:gpds}).

We start with the source map $\fs$.  Recall that it is defined by 
\[
\fs [\gamma] = \gamma (0)
\]
for any class $[\gamma]\in \calP (M)/_{\sim}$.  Let $q:\calP(M)\to
\calP (M)/_{\sim}$ be the quotient map.  By Lemma~\ref{lemma:3.18}, it
is enough to show that for any plot $p:U\to \calP (M)$ the composite
map $f(u):= (\fs \circ q \circ p) (u) $ is smooth.  However, $\fs\circ
q$ is just the map $ev_0:\calP(M)\to M$ which is smooth by Lemma
~\ref{lem:conc}.  So $\fs$ is smooth.  A similar argument shows that
$\ft$ is smooth.

To show that the multiplication map $\fm$ is smooth, we
need to show that for any plot $p:U\to (\calP (M)/_{\sim}) \times _M
(\calP (M)/_{\sim})$ the composite map $\fm\circ p: U\to
\calP(M)/_{\sim}$ is a plot for the quotient diffeology on $(\calP
(M)/_{\sim})$.  Recall that $\fm: (\calP (M)/_{\sim} )\times _M (\calP
(M)/_{\sim}) \to \calP (M)/_{\sim}$ is defined by 
\[
\fm([\gamma], [\tau]) := [\tilde{\fm} (\gamma, \tau)]
\]
where $\tilde{\fm}: \calP (M)\times _M \calP(M)\to \calP (M)$ is the concatenation:
\begin{equation*} 
\tilde{\fm}(\gamma,\tau)\, (t) := 
\left\{ 
\begin{array}{lrl} \tau (2t) & \mathrm{if} & t\in [0,1/2] \\
\gamma (2t-1) & \mathrm{if} & t \in [1/2,1] \\ 
\end{array} \right.  \qquad 
\end{equation*}
(q.v.\ \eqref{eq:2.1}).  In other words, $\fm$ is defined to make the diagram
\begin{equation}\label{eq:3.2}
\xy
(-25,7)*+{\calP (M)\times _M \calP(M) }="1";
(25, 7)*+{\calP(M)}="2";
(-25,-7)*+{(\calP (M)/_{\sim} )\times _M (\calP(M)/_{\sim})}="3";
(25,-7)*+{\calP(M)/_{\sim}}="4";
 {\ar@{->}^{\qquad \tilde{\fm}} "1";"2"};
 {\ar@{->}^{q} "2";"4"};
 {\ar@{->}_{(q,q)} "1";"3"};
{\ar@{->}_{\qquad \fm} "3";"4"};
\endxy
\end{equation}
commute.  Recall once more that a plot $r:U\to \calP (M)/_{\sim}\times
_M \calP(M)/_{\sim} $ is of the form $r = (r_1, r_2)$ where $r_i:U\to
\calP(M)/_{\sim}$ are plots with $\fs \circ r_1 = \ft\circ r_2$.
Since both $r_1$ and $r_2$ locally lift with respect to the quotient
map $q:\calP(M)\to\calP(M)/_{\sim}$ to plots of $\calP(M)$, it follows
that for each point $x\in U$, we can find an open neighborhood $V$ of
$x$ in $U$ and plots $s_i:V\to \calP(M)$, such that $q\circ s_i =
r_i|_V$.  Furthermore, since $\fs \circ q=ev_0$ and $\ft \circ q
=ev_1$, the map $s=(s_1,s_2)$ is a plot for the fiber product diffeology on 
$\calP(M)\times_M\calP(M)$.

By Lemma~\ref{lem:conc} the concatenation $\tilde{\fm}$ is smooth.
Hence $\tilde{\fm}\circ s$ is a plot for $\calP(M)$ which descends to
the plot $q\circ\tilde{\fm}\circ s$ for $\calP(M)/_{\sim}$.
Since\eqref{eq:3.2} commutes, it follows that $\fm\circ r$ is locally
a plot for $\calP(M)$.  Since diffeologies are sheaves, $r$ is
globally a plot.  Therefore the map $\fm$ is smooth.  The proof that
the structure maps $\fri$ and $\fu$ are smooth is similar.
\end{proof}

We end the appendix with two technical results.

\begin{lemma}\label{lem:loc_sections}
  For any manifold $M$ the target map $\ft: \calP(M)/_{\sim}\to M$
  of the thin fundamental groupoid $\pithin(M)$ has local
  sections. More precisely, for any $[\gamma] \in \calP(M)/_{\sim}$,
  there is a neighborhood $U$ of $x= \gamma (1) = \ft([\gamma])$ and a
  smooth section $\sigma :U\to \calP(M)/_{\sim}$ of $\ft$ with $\sigma
  (x) = [\gamma]$.
\end{lemma}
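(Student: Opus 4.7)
The plan is to build a local section using a coordinate chart around $x=\ft([\gamma])=\gamma(1)$ and the ``straight line with sitting instances'' construction from Construction~\ref{c:upsilon}, then concatenate with a representative $\gamma$. Concretely, first I would choose a chart $\varphi:U\to\R^n$ with $\varphi(x)=0$ and $\varphi(U)\subset\R^n$ open and star-shaped around $0$ (shrink $U$ if needed). Using the cutoff function $\beta\in C^\infty([0,1])$ from Remark~\ref{rmrk:5.3333}, I would define, for each $y\in U$, a path with sitting instances
\[
\tau_y(t) := \varphi^{-1}\bigl(\beta(t)\,\varphi(y)\bigr), \qquad t\in[0,1].
\]
By construction $\tau_y(0)=x$ and $\tau_y(1)=y$, and $\tau_x=1_x$ is the constant path at $x$.

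The second step is to concatenate with a fixed representative $\gamma$ of the given class: set
\[
\sigma(y) := [\tau_y\,\gamma] \in \calP(M)/_{\sim},
\]
where $\tau_y\gamma$ is the concatenation of \eqref{eq:2.1} (well-defined since $\gamma(1)=x=\tau_y(0)$). Then $\ft(\sigma(y))=\tau_y(1)=y$, so $\sigma$ is a set-theoretic section of $\ft$ on $U$. Moreover $\sigma(x)=[\tau_x\,\gamma]=[1_x\,\gamma]=[\gamma]$, since $1_x\gamma$ is thinly homotopic to $\gamma$ (use the standard reparametrization homotopy, which has rank at most $1$).

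The third step is smoothness. The associated map $U\times[0,1]\to M$, $(y,t)\mapsto \varphi^{-1}(\beta(t)\varphi(y))$, is smooth, so by Definition~\ref{def:pathspacedifeol} the map $y\mapsto\tau_y$ is a plot $U\to\calP(M)$. The constant assignment $y\mapsto\gamma$ is also a plot, and by construction $(\gamma,\tau_y)$ takes values in the fiber product $\calP(M)\times_{ev_0,M,ev_1}\calP(M)$. By Lemma~\ref{lem:conc}(2) the concatenation map $\tilde{\fm}$ is smooth, so $y\mapsto \tau_y\gamma$ is a plot on $\calP(M)$. Composing with the quotient $q:\calP(M)\to\calP(M)/_{\sim}$ and invoking Construction~\ref{constr:quot-diffeo} shows that $\sigma=q\circ(\tau_{(-)}\gamma)$ is a plot for the quotient diffeology, hence smooth as a map into $\calP(M)/_{\sim}$.

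There is no real obstacle here: every ingredient has already been established. The only points requiring a moment of care are (i) checking that $\sigma(x)=[\gamma]$ really holds at the level of thin homotopy classes (a short argument: the constant path $1_x$ composed with $\gamma$ is thinly homotopic to $\gamma$ by a rank-$\le 1$ reparametrization homotopy with sitting instances), and (ii) ensuring the chart is chosen small enough that $\beta(t)\varphi(y)$ remains in $\varphi(U)$ for all $(t,y)$, which is guaranteed by shrinking $U$ to make $\varphi(U)$ star-shaped around $0$.
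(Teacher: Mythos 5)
Your proof is correct and takes essentially the same approach as the paper: a chart centered at $x$, the cutoff $\beta$ to build the straight-line plot $y\mapsto\tau_y$, and composition with $[\gamma]$ via the (smooth) groupoid multiplication. One tiny slip: since $\tilde{\fm}(a,b)=ab$ requires $a(0)=b(1)$, the pair fed into $\tilde{\fm}$ should be $(\tau_y,\gamma)$ (so that $\tau_y(0)=x=\gamma(1)$), not $(\gamma,\tau_y)$; the resulting path $\tau_y\gamma$ you wrote is the intended one, so the argument stands.
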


\begin{proof}
  Choose a coordinate chart $\varphi:U\to \R^n$ ($n= \dim M$) with
  $x\in U$, $\varphi (x) = 0$ and $\varphi(U) = \R^n$.  The map  
\[
p:U\times [0,1]\to M, \qquad 
p(y,t) := \varphi\inv (\beta(t)\cdot\varphi (y))
\]
is smooth.  Here $\beta\in C^\infty ([0,1])$ is the function in
Remark~\ref{rmrk:5.3333}.  Then
\[
\check{p}:U\to \calP(M), \qquad \check{p}(y)\,(t):= \varphi\inv (\beta(t)\cdot\varphi (y))
\]
is smooth. By construction, $\check{p}(x)$ is the constant curve $1_x$.  Also
$\check{p}(y)(0) =x$ and $\check{p}(y)(1) = y$ for all $y\in U$.  Hence
\[
[\check{p}]:= q\circ p:U\to \calP(M)/_{\sim} 
\]
is a section of $\ft$ with $[\check{p}](x) = [1_x]$ and $\fs([\check{p}(y)]) = x$ for
all $y\in U$.  Since the multiplication $\fm:\calP(M)/_{\sim}
\times_M \calP(M)/_{\sim} \to \calP(M)/_{\sim} $ is smooth, the map
\[
\sigma:U\to \calP(M)/_{\sim}, \qquad \sigma(y):= [\check{p}(y)][\gamma]
\]
is smooth.  By construction, $\sigma$ is a desired section.
\end{proof}

\begin{proposition}\label{prop:3.27}
The assignment $M\mapsto \pithin (M)$ extends to a 
  functor 
\[
\pithin: \Man \to \Difgpd 
\]
from the category $\Man$ of manifolds
  to the category $ \Difgpd$  of diffeological groupoids.
\end{proposition}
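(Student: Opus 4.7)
The plan is to define $\pithin$ on a morphism $f\colon N\to M$ of $\Man$ by setting $\pithin(f)_0:=f$ on objects and $\pithin(f)_1([\gamma]):=[f\circ\gamma]$ on arrows. First I would verify well-definedness at the level of representatives: if $\gamma\colon[0,1]\to N$ is constant near $0$ and $1$, so is $f\circ\gamma$, hence $f\circ\gamma\in\calP(M)$. Given a thin homotopy $H\colon\gamma_0\Rightarrow\gamma_1$ in $N$, the composite $f\circ H\colon[0,1]^2\to M$ inherits smoothness and the sitting-instance property on the boundary, and by the chain rule
\[
\rank(d(f\circ H))_{(s,t)} = \rank(df_{H(s,t)}\circ dH_{(s,t)}) \leq \rank(dH)_{(s,t)} \leq 1,
\]
so $f\circ H$ is a thin homotopy from $f\circ\gamma_0$ to $f\circ\gamma_1$. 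Therefore $\pithin(f)_1$ descends to thin homotopy classes.

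Next I would check smoothness of $\pithin(f)_0$ and $\pithin(f)_1$ as maps of diffeological spaces. Smoothness on objects is immediate since $f\colon N\to M$ is smooth. For arrows, let $q_N\colon\calP(N)\to\calP(N)/_\sim$ and $q_M\colon\calP(M)\to\calP(M)/_\sim$ denote the quotient maps. By Lemma~\ref{lemma:3.18}, it suffices to check that for every plot $p\colon U\to\calP(N)$ the composite $q_M\circ F_*\circ p\colon U\to\calP(M)/_\sim$ is a plot, where $F_*\colon\calP(N)\to\calP(M)$ is postcomposition by $f$. The associated map $\widehat{F_*\circ p}(u,t)=f(\hat{p}(u,t))$ is smooth because $\hat p$ is smooth by Definition~\ref{def:pathspacedifeol}, so $F_*\circ p$ is a plot of $\calP(M)$, and postcomposing with $q_M$ gives a plot of the quotient.

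After this, I would verify that $(\pithin(f)_0,\pithin(f)_1)$ intertwines the five structure maps $\fs,\ft,\fu,\fri,\fm$ of Proposition~\ref{thm:gpoidpf}. Compatibility with source and target reduces to $(f\circ\gamma)(0)=f(\gamma(0))$ and $(f\circ\gamma)(1)=f(\gamma(1))$; compatibility with the unit to $f\circ 1_x=1_{f(x)}$; compatibility with inversion to $(f\circ\gamma)^{-1}(t)=f\circ\gamma^{-1}(t)$; and compatibility with multiplication follows term-by-term from the piecewise definition \eqref{eq:2.1} of concatenation. Thus $\pithin(f)$ is a morphism in $\Difgpd$.

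Finally, functoriality is a direct check: $\pithin(g\circ f)_1[\gamma]=[(g\circ f)\circ\gamma]=[g\circ(f\circ\gamma)]=\pithin(g)_1\pithin(f)_1[\gamma]$ for composable $f,g$, and $\pithin(\id_M)$ clearly equals $\id_{\pithin(M)}$. The only step requiring more than bookkeeping is the rank estimate ensuring thinness is preserved by postcomposition, but this is immediate from the chain rule; everything else is routine verification.
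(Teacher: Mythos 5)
Your proof is correct and follows essentially the same route as the paper: define $\pithin(f)$ by postcomposition, check via the chain rule that thin homotopies push forward, verify smoothness of the induced map on arrows using the path-space and quotient diffeologies (via Lemma~\ref{lemma:3.18}), and confirm compatibility with the groupoid structure maps. You are slightly more explicit than the paper on the rank estimate and on the functoriality checks $\pithin(g\circ f)=\pithin(g)\circ\pithin(f)$ and $\pithin(\id_M)=\id_{\pithin(M)}$, which the paper leaves implicit; otherwise the arguments coincide.
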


\begin{proof}
 Let $f:M\to N$ be a smooth map between two manifolds.  We need to
  define a smooth functor $\pithin(f):\pithin(M)\to \pithin(N)$ so that on
  objects it is the map $f:M\to N$.

Define $f_*:\calP(M)\to \calP(N)$ by 
\[
f_*(\gamma) = f\circ \gamma
\]
for all paths $\gamma\in \calP(M)$.   If $p:U\to \calP(M)$ is a plot, then 
\[
((f_* \circ p)(u) )\,(t)  = f(p(u)(t)) = f(\hat{p}(u,t)),
\]
where, as before, $\hat{p}(u,t) = p(u)(t)$.  By definition of the
diffeology on the space of paths, the map $\hat{p}$ is smooth. Hence
$(u,t)\mapsto ((f_* \circ p)(u) )\,(t) $ is smooth.  It follows
\[
f_*\circ p:U\to \calP(N)
\]
is a plot.  Therefore $f_*$ is smooth. 
 Follow $f_*$ by the quotient
map $q_N:\calP(N)\to \calP(N)/_{\sim}$.  If $H:\gamma_0\Rightarrow
\gamma_1$ is a thin homotopy between two paths in $M$ then $f\circ H:
f_*(\gamma_0)\Rightarrow f_*(\gamma_1)$ is a thin homotopy between
their images in $\calP(N)$.  Hence $q_N\circ f_*$ induces a smooth map
$[f_*]:\calP(M)/_{\sim} \to \calP(N)/_{\sim}$ making the diagram
\begin{equation} \label{eq:489}
\xy
(-10,10)*+{\calP(M)}="1";
(20,10)*+{\calP(N)}="2";
(-10,-7)*+{\calP(M)/_{\sim}}="3";
(20,-7)*+{\calP(N)/_{\sim}}="4";
 {\ar@{->}^{f_*} "1";"2"};
 {\ar@{->}^{q_N} "2";"4"};
 {\ar@{->}_{q_M} "1";"3"};
{\ar@{->}_{[f_*]} "3";"4"};
\endxy
\end{equation}
commute.
Note that
\[
\fs ([f_*]([\gamma]))= 
\fs ([f\circ \gamma]) = f(\gamma(0)) = f(\fs([\gamma])).
\]
Similarly $\fs ([f_*]([\gamma]))= f(\ft([\gamma]))$.  Hence
\begin{equation} \label{eq:488}
\xy
(-10,10)*+{\calP(M)/_{\sim}}="1";
(20,10)*+{\calP(N)/_{\sim}}="2";
(-10,-7)*+{M\times M}="3";
(20,-7)*+{N\times N}="4";
 {\ar@{->}^{[f_*]} "1";"2"};
 {\ar@{->}^{(\fs, \ft)} "2";"4"};
 {\ar@{->}_{(\fs, \ft)} "1";"3"};
{\ar@{->}_{f} "3";"4"};
\endxy
\end{equation}
commutes. 
If $\gamma, \tau:[0,1]\to M$ are two paths
with $\gamma(0) = \tau(1)$ then $f\circ \gamma (0) = f\circ \tau (1)$.
Moreover the concatenation of $f\circ \gamma$ and $f\circ \tau$ is
the concatenation of $\gamma$ and $\tau$ followed by $f$:
\[
(f\circ \gamma) (f\circ \tau) = f\circ (\gamma\tau).
\]
It follows from the definition of multiplication in the thin
fundamental groupoid and the map $[f_*]$ that
\[
[f_*] ([\gamma][\tau]) = [f\circ \gamma][f\circ \tau].
\]
Thus $[f_*]$ preserves multiplication.  Since $f_*$ applied to a
constant path $1_x$ is the constant path $1_{f(x)}$, $[f_*]$ preserves
identities.  Define $\pithin(f)$  on objects to be
$f$ and on arrows $[f_*]$. Then $\pithin(f)$ is a functor.
It is smooth by construction.
\end{proof}

% %%%%%%%%%%%%%%%%%%%%%%%%%%%%% 
%%%%%%%%%%%%%%%%%%%%%%%%%%%%%%%%%%%%%%%%%%%%%%%%%%%%%%%%%%%%%%%%%%%%%%%%%%%%%%
  \section{Functors between the cocycle categories}\label{app:2}
%%%%%%%%%%%%%%%%%%%%%%%%%%%%%%%%%%%%%%%%%%%%%%%%%%%%%%%%%%%%%%%%%%%%%%%%%%%%%%
In this appendix we  prove\\

\noindent{\bf Proposition}~\ref{prop:last}
Suppose $\Gamma$ is a Lie groupoid, $\calX,\calY\to \Man$ are two
  categories fibered in groupoids and $F:\calX \to \calY$ is a
  1-morphism of fibered categories. Then the functor $F$ induces a
  functor
\[
F_\Gamma:\calX(\Gamma)\to \calY(\Gamma)
\]
between cocycle categories. Moreover if $F$ is an equivalence of
categories then so is $F_\Gamma$.\\

\begin{proof}
  Recall that objects and arrows of a CFG $\varpi: \calU\to \Man$ pull
  back along maps in $\Man$.  More precisely suppose $f:M\to N$ is a
  morphism in $\Man$ and $x$ is an object of $\calU(N)$.  Then there
  is an object $f^*x$ of $\calU(M)$ (``a pullback of $x$ by $f$'') and
  an arrow $\tilde{f}:f^*x \to x$ with $\varpi(\tilde{f}) = f$.  The
  pullback $f^*x$ is not unique, but any two pullbacks are isomorphic.
  If $x_1\xrightarrow{\xi} x_2$ is a morphism in $\calU(M)$ then given
  the choices of pullbacks $\tilde{f_i}: f^*x_i \to x_i$, $i=1,2$
  there is a unique arrow $f^*\xi: f^*x_1 \to f^*x_2$ making the
  diagram
\[
\xy
(0,6)*+{f^*x_1}="1"; 
(0,-6)*+{f^*x_2}="2";
(12,6)*+{x_1} ="3";(12,-6)*+{\xi_2} ="4";
{\ar@{->}^{\tilde{f}_1} "1";"3"};
{\ar@{->}_{\tilde{f}_2} "2";"4"};
{\ar@{->}^{\xi} "3";"4"};
{\ar@{->}_{f^*\xi} "1";"2"};
\endxy.
\]
commute.

At this point it is convenient to completely switch to simplicial
notion and set $d_0:\Gamma_1\to \Gamma_0$ to be the source map and
$d_1:\Gamma_1\to \Gamma_0$ to be the target map.

Suppose $(x,\varphi:d_0^*x\to d_1^*x)$ is an object of
$\calX(\Gamma)$. Then $F(x)$ is an object of $\calY(\Gamma_0)$.  Now
{\em choose}
\[
d_i^* F(x) := F(d_i^*x).
\]
Then $F(\varphi)$ is a morphism in $\calY(\Gamma_1)$ from $d_0^* F(x)$
to $d_1^* F(x)$.  We need to check that the pair $(F(x), F(\varphi))$
is an object of $\calY (\Gamma)$, i.e., that $F(\varphi)$ satisfies
the cocycle condition.  We set
\[
d_i^*d_j^*F(x) := F(d_i^*d_j^*x)\qquad i=0,1,2,\,j=0,1.
\]
By definition of an object in $\calX(\Gamma)$ the pullbacks 
\[
d_i^*\varphi: d_i^*d_0^* x \to d_i^*d_1^*x, \qquad i=0,1,2
\]
satisfy the cocycle equation
\[
d_2^* \varphi \, d_0^*\varphi = d_1^* \varphi.
\]
Since the diagrams
\[
\xy
(-12,10)*+{F(d_i^*d_0^*x)}="1"; 
(-12,-6)*+{F(d_i^*d_1^* x)}="2";
(12,10)*+{F(d_0^*x)} ="3";
(12,-6)*+{F(d_1^*x)} ="4";
{\ar@{->} "1";"3"};
{\ar@{->}^{F(\varphi)} "3";"4"};
{\ar@{->} "2";"4"};
{\ar@{->}_{F(d_i^*\varphi)} "1";"2"};
\endxy  .
\]
commutes in $\calY$ and since the horizontal arrows project to $d_i:\Gamma_1\times_{\Gamma_0}\Gamma_1 \to \Gamma_1$ ($i=0,1,2$) we have
\[
F(d_i^*\varphi) = d_i^* F(\varphi).
\]
Hence
\[
d_2^*F(\varphi)\,  d_0^*F(\varphi) =F(d_2^*\varphi)\, 
F(d_0^*\varphi) =F(d_2^*\varphi \, d_0^*\varphi) =F(d_1^*\varphi) =
d_1^*F(\varphi).
\]
We conclude that the pair $(F(x),F(\varphi))$ is an
object of $\calY(\Gamma)$.  Similarly if $\alpha:(x,\varphi) \to
(x',\varphi')$ is a morphism in $\calX(\Gamma)$ then
$F(\alpha):(F(x),F(\varphi)) \to (F(x'),F(\varphi'))$ is a morphism in
$\calY$.  It is routine to check that these maps on objects and
morphisms do assemble into a functor $F_\Gamma:\calX(\Gamma)\to
\calY(\Gamma)$.

If $F:\calX\to \calY$ is an equivalence of categories then it has a
weak inverse $H:\calY\to \calX$ so that $\gamma: H\circ F\Rightarrow
id_{\calX}$ and $\delta: F\circ H \Rightarrow id_\calY$ for some
natural isomorphism $\gamma$ and $\delta$.  These natural isomorphisms
then give rise to natural isomorphisms
\[
\gamma_\Gamma:H_\Gamma\circ F_\Gamma \Rightarrow id_{\calX(\Gamma)},
\]
\[
\delta_\Gamma:F_\Gamma\circ H_\Gamma \Rightarrow id_{\calY(\Gamma)}
\]
and the proposition follows.
\end{proof}

\end{appendix}

\end{document}